\renewcommand{\l}{\lambda}
\newcommand{\D}{\displaystyle}
\newtheorem{thm}{Theorem}[section]
\newtheorem{lem}[thm]{Lemma}
\newtheorem{prop}[thm]{Proposition}
\theoremstyle{definition}
\newtheorem{defn}[thm]{Definition}
\theoremstyle{remark}
\DeclareMathOperator*\Res{{Res}}   \DeclareMathOperator\re{{Re}}
\DeclareMathOperator\im{{Im}}
\numberwithin{equation}{section}
\begin{document}

\title{ Painlev\'{e} IV asymptotics for orthogonal polynomials with respect
to a modified Laguerre weight}
\date{\today}
\author{D. Dai and A.B.J. Kuijlaars$^*$ \\
\small{Department of Mathematics, Katholieke Universiteit Leuven,} \\
\small{Celestijnenlaan 200 B, 3001 Leuven, Belgium.} \\
\small{dan.dai@wis.kuleuven.be, arno.kuijlaars@wis.kuleuven.be}}

\maketitle


\begin{abstract}

We study polynomials that are orthogonal with respect to the modified Laguerre weight $z^{-n + \nu} e^{-Nz} (z-1)^{2b}$ in the
limit where $n, N \to \infty$ with $N/n \to 1$ and $\nu$ is a fixed number in $\mathbb{R} \setminus \mathbb{N}_0$. With the
effect of the factor $(z-1)^{2b}$, the local parametrix near the critical point $z =1$ can be constructed in terms of
$\Psi$-functions associated with the Painlev\'e IV equation. We show that the asymptotics of the recurrence coefficients of
orthogonal polynomials can be described in terms of specified solution of the Painlev\'e IV equation in the double scaling
limit. Our method is based on the Deift/Zhou steepest decent analysis of the Riemann-Hilbert problem associated with orthogonal
polynomials.

\end{abstract}

\vspace{30mm}

\hrule width 65mm

\vspace{5mm}

{\it Key words}\,:  Painlev\'e IV equation, Riemann-Hilbert problem, Deift/Zhou steepest decent analysis.

$^*$The authors are partially supported by K.U.Leuven research grant OT/04/21 and by the Belgian Interuniversity Attraction Pole
P06/02. In addition, the second author is partially supported by FWO-Flanders project G.0455.04, by the European Science
Foundation Program MISGAM, and by the Ministry of Education and Science of Spain, project code MTM2005-08648-C02-01.


\section{Introduction and statement of results}

\subsection{Modified Laguerre weights}

Let $\alpha$, $b$ be real constants, and let $N$ be a positive number.  Define the (complex-valued) weight function to be
\begin{equation} \label{weight}
w(z) = z^{\alpha} e^{-Nz} (z-1)^{2b},
\end{equation}
where $z^{\alpha}$ and $(z-1)^{2b}$ are defined with cuts along $[0,\infty)$ and $[1,\infty)$, respectively. That is,
$z^{\alpha} = |z|^{\alpha} e^{i \alpha \arg z}$ with $0 < \arg z < 2\pi$, and $(z-1)^{2b} = |z-1|^{2b} e^{2 i b \arg (z-1)}$
with $0 < \arg(z-1) < 2\pi$. Let $\Sigma$ be a contour in $\mathbb C \setminus [0,\infty)$ that is symmetric with respect to the
real axis, tends to infinity in the horizontal direction and remains bounded in the vertical direction. This contour divides the
complex plane into two domains $\Omega_{\pm}$; see Figure~\ref{Sigmacontour}. We consider monic polynomials $\pi_n$ of degree
$n$, $\pi_n(z) = z^n + \cdots$, that are orthogonal with respect to $w(z)$ on $\Sigma$, i.e.,
\begin{equation} \label{orthogonality}
    \int_{\Sigma} \pi_n(z) z^j w(z) dz = 0, \qquad j=0, 1, \ldots, n-1.
    \end{equation}
This is an example of non-Hermitian orthogonality, for which there is no general existence and uniqueness result associated with
such $w(z)$. It will be part of our results that in the asymptotic regime that we will consider, the polynomial $\pi_n$ uniquely
exists for $n$ large enough.

\begin{figure}[h]
\centering
\includegraphics[width=200pt]{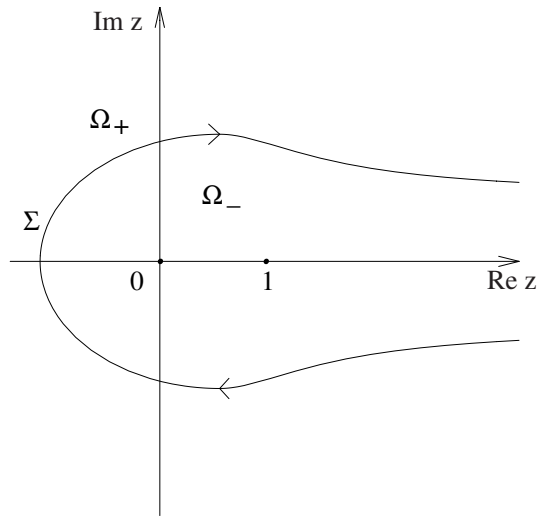}
\caption{The contour $\Sigma$} \label{Sigmacontour}
\end{figure}

For $b=0$ and $\alpha > -1$ (with $\alpha \not\in \mathbb Z$) we may  deform the contour of integration to $[0,\infty)$. Then
the orthogonality condition (\ref{orthogonality}) reduces to
\begin{equation} \label{orthogonality2}
    \int_0^{\infty} \pi_n(x) x^{j+\alpha} e^{-Nx} dx = 0, \qquad j=0, 1, \ldots, n-1,
    \end{equation}
which shows that in this case $\pi_n$ is related to the classical Laguerre polynomial with parameter $\alpha$,
\begin{equation} \label{classicalLaguerre}
    \pi_n(z) = (-1)^n \frac{n!}{N^n} L_n^{(\alpha)}(Nz).
    \end{equation}
For properties of the classical Laguerre polynomial $L_n^{(\alpha)}(z)$, see e.g.\ Abramowitz and Stegun \cite{as} or Szeg\H{o}
\cite{s}. For $b =0$ and $\alpha < -1$, the orthogonality condition (\ref{orthogonality2}) is not valid, but the relation
(\ref{classicalLaguerre}) still holds true. Asymptotic properties of the Laguerre polynomials with large negative parameters
were studied by Kuijlaars and McLaughlin \cite{km1,km2} in the regime where $N = n$, $n \to \infty$, $\alpha \to -\infty$ in
such a way that
\[ \lim_{n \to \infty}  - \frac{\alpha}{n} = A \]
exists. Then it was found that the zeros of $\pi_n$ accumulate either on an open contour (if $A > 1$) or on a union of a closed
contour and a real interval (if $0 < A < 1$); see Figure~\ref{lagzero}. For the special value $A=1$, the zeros typically (but
not always!) accumulate on the Szeg\H{o} curve $\mathcal{S}$ defined by
\begin{equation} \label{szego-curve}
    \mathcal{S} := \{ z \in \mathbb{C} : |z e^{1-z}| = 1 \quad \textrm{and} \quad |z| \leq 1 \}.
\end{equation}
In the special case that $\alpha = -n + \nu$ with $\nu \notin \mathbb{N}_0 = \mathbb{N} \cup \{0\}$ fixed, it was also found
\cite{dev,kmm} that the local asymptotics of the polynomials $\pi_n$ of (\ref{classicalLaguerre}) near the point $z=1$ are given
in terms of parabolic cylinder functions $D_{\nu}(z)$. The main tool in the analysis of \cite{km1,km2,dev,kmm} is the matrix
valued Riemann-Hilbert (RH) problem for Laguerre polynomials and the Deift/Zhou method of steepest descent, which was introduced
in \cite{dz} and first applied to orthogonal polynomials in \cite{dkmvz,dkmvz2}; see also \cite{deift}. The parabolic cylinder
functions appear via a construction of a local parametrix in a neighborhood of the point $z=1$.

\begin{figure}[h]
\centering
\includegraphics[width=400pt]{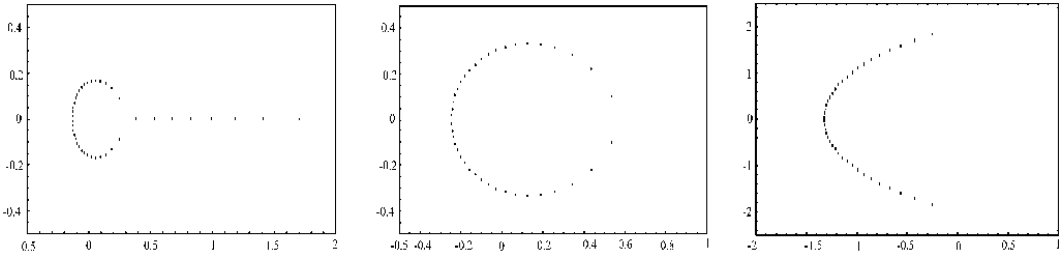}
\caption{Zeros of $L_n^{(-nA)}(nz)$ for $n=40$, and $A=0.81$ (left), $A=1.001$ (middle), and $A=2$ (right).} \label{lagzero}
\end{figure}

The main effect of the extra factor $(z-1)^{2b}$ in the weight (\ref{weight}) is in the local behavior of the polynomials near
the point $z=1$. From the point of view of Riemann-Hilbert analysis it means that in the critical case $A=1$ the construction of
the local parametrix with parabolic cylinder functions will no longer work. It is the aim of this paper to show that in this
situation the role of the parabolic cylinder functions is replaced by the $\Psi$ functions associated with a special solution of
the Painlev\'e IV equation. To the best of our knowledge, this is the first time that Painlev\'e IV functions are used in the
construction of a local parametrix in a RH steepest descent analysis.

We were guided in our approach by a number of recent advances in random matrix theory and the theory of orthogonal polynomials.
In critical situations it was found that Painlev\'e transcendents appear in the description of the local eigenvalue statistics
of random matrices as well as in the local asymptotics of orthogonal polynomials near singular points. For example, the local
eigenvalue statistics at the opening of a gap in the spectrum of unitary invariant random matrix ensembles is described by the
Hastings-McLeod solution of the Painlev\'e II equation \cite{bi,ck,ckv}. The same function  is well-known to play a role in the
Tracy-Widom distributions for the largest eigenvalue of random matrices \cite{tw} and the problem of the length of the longest
increasing subsequence of a random permutation \cite{bdj}. Singular behavior at edge points may be described by special
solutions of the Painlev\'e I equation \cite{dk,fik}, the second member of the Painlev\'e I hierarchy \cite{cg,cv}, and the
Painlev\'e XXXIV equation \cite{ikj}.  The Riemann-Hilbert approach was used in all of these situations.

In the statement of the results in this paper we will focus on the behavior of the recurrence coefficients in the three-term
recurrence relation
\begin{equation} \label{3term}
    \pi_{n+1}(z) = (z-b_n) \pi_{n}(z) - a_n \pi_{n-1}(z)
\end{equation}
in the asymptotic regime where $n \to \infty$ with
\begin{equation} \label{ta-def}
    \alpha = -n + \nu, \qquad N = n + \sqrt{2} \, L n^{1/2}
\end{equation}
with $b$, $\nu$ and $L$ are real constants and independent of $n$. Then of course the recurrence coefficients depend on the
constants $b$, $\nu$ and $L$, but we suppress it in our notation. In this regime it turns out that we see the appearance of the
Painlev\'e IV transcendent.

As noted before, for $b=0$ the polynomials reduce to rescaled Laguerre polynomials (\ref{classicalLaguerre}). From the
recurrence relation for Laguerre polynomials
\begin{equation} \label{Laguerrerecurrence1}
    - z L_n^{(\alpha)}(z) =
    (n+1) L_{n+1}^{(\alpha)}(z) - (2n + \alpha + 1) L_n^{(\alpha)}(z) + (n + \alpha) L_{n-1}^{(\alpha)}(z)
\end{equation}
and (\ref{classicalLaguerre}) it follows that under the scaling (\ref{ta-def}) the recurrence coefficients behave as
\begin{equation} \label{Laguerrerecurrence2}
    a_n = \frac{\nu}{n} + O(n^{-3/2}), \qquad
    b_n = 1 - \frac{\sqrt{2} \, L}{n^{1/2}} + O(n^{-1}), \qquad \text{as } n \to \infty.
\end{equation}

For general $b \in \mathbb{R}$ and $\nu \not\in \mathbb N_0$, we prove that  (except for a discrete set of values $L$) $a_n$ and
$b_n$ exist for $n$ large enough, and satisfy
\[ a_n = \frac{a^{(1)}}{n} + O(n^{-3/2}), \qquad
   b_n = 1 - \frac{b^{(1)}}{n^{1/2}} + O(n^{-1}), \]
for certain explicit constants $a^{(1)}$ and $b^{(1)}$, which depend on $b$, $\nu$ and $L$, that are explicitly calculated in
terms of functions associated with the Painlev\'e IV equation. For $b=0$ they reduce to $a^{(1)} = \nu$ and $b^{(1)} = \sqrt{2}
\, L$.

Before we can state our result we first discuss the Painlev\'e IV equation.


\subsection{Painlev\'e IV equation} \label{rh-piv}

The Painlev\'{e} IV (PIV) equation is defined as
\begin{equation} \label{PIV}
    \frac{d^2 u}{d s^2} =  \frac{1}{2u} \left( \frac{du}{ds} \right)^2 + \frac{3}{2} u^3 +
    4 s u^2 + 2(s^2 + 1 - 2 \Theta_\infty)u - \frac{8 \Theta^2}{u},
\end{equation}
where $\Theta$ and $\Theta_\infty$ are constants.  It is known that all solutions $u(s)$ of PIV are meromorphic functions in the
complex $s$-plane and all poles of $u(s)$ are simple with the residue $\pm 1$; see \cite{gls,luka,stein}. For more properties of
the PIV transcendent, see a very good review article by Clarkson \cite{clarkson} and references therein. The $\Psi$-functions
associated with Painlev\'e IV are solutions of the following system of linear differential equations (Lax pair) for $\Psi(\l,s)$
given first by Jimbo and Miwa \cite{jm}
\begin{equation} \label{PIVsystem}
\frac{\partial \Psi}{\partial \l} = A \Psi, \quad  \frac{\partial \Psi}{\partial s} = B \Psi,
\end{equation}
with
\begin{align}
A = & \left( \l + s + \frac{1}{\l}(\Theta - K) \right) \sigma_3 +
    y \left( 1 - \frac{u}{2 \l} \right) \sigma_+ \nonumber \\
   & \qquad + \frac{2}{y} \left( K - \Theta - \Theta_\infty + \frac{K}{ \l u} (K - 2 \Theta) \right) \sigma_-, \\
B  = & \l \sigma_3 + y \sigma_+ + \frac{2}{y}(K -\Theta - \Theta_\infty) \sigma_-,
\end{align}
where $y = y(s)$ and $K = K(s)$ are defined by
\begin{align}
    \frac{1}{y} \frac{dy}{d s} & =  - u - 2s,
    \label{y-def} \\
    K & =  \frac{1}{4} \Big( - \frac{du}{d s} + u^2 + 2s \, u + 4 \Theta \Big)
\label{k-def}
\end{align}
and
\begin{equation} \label{sigma-def}
    \sigma_3 = \left( \begin{matrix} 1 & 0 \\ 0 & -1 \end{matrix} \right),  \quad
    \sigma_+ = \left( \begin{matrix} 0 & 1 \\ 0 & 0 \end{matrix} \right), \quad
    \sigma_- = \left( \begin{matrix} 0 & 0 \\ 1 & 0 \end{matrix} \right).
\end{equation}
The Painlev\'e IV equation (\ref{PIV}) is the compatibility condition for the system (\ref{PIVsystem}). It should be noted that
the Lax pair is not unique. For example, one can find another Lax pair in Kitaev \cite{kitaev} and Milne, Clarkson and Bassom
\cite{mcb}.

To explain the special solutions of PIV that will appear in our results, we recall the RH problem for $\Psi(\l,s)$ associated
with PIV.
\begin{figure}[htb]
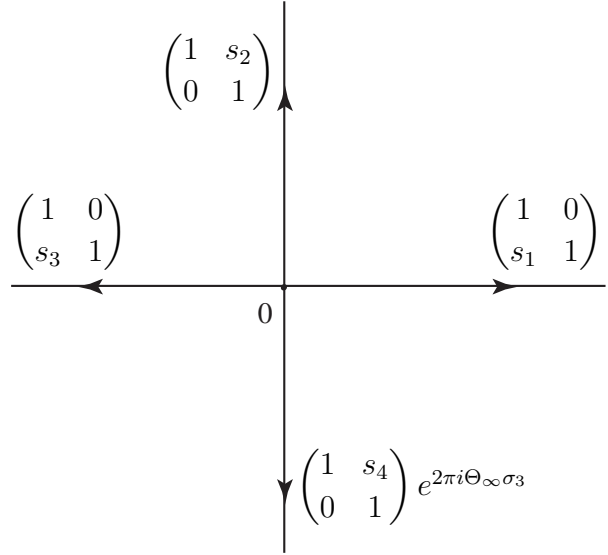

\centering \begin{overpic}[scale=0.8]%
{psicontour}%
\put(80,53){$\begin{pmatrix} 1 & 0 \\ s_1 & 1 \end{pmatrix}$} \put(25,80){$\begin{pmatrix} 1 & s_2 \\ 0 & 1 \end{pmatrix}$}
\put(0,53){$\begin{pmatrix} 1 & 0
\\ s_3 & 1 \end{pmatrix}$} \put(48,10){$\begin{pmatrix} 1 & s_4 \\ 0 & 1 \end{pmatrix} e^{2\pi i \Theta_{\infty} \sigma_3}$}
\end{overpic}
\caption{The contour and the jump matrices for the RH problem for $\Psi$} \label{psicontour}
\end{figure}
In the RH problem $\Psi(\lambda,s)$ is viewed as a function of $\lambda$, with $s$ appearing as a parameter in the asymptotic
condition (\ref{psi-asy}). The constants $\Theta_{\infty}$ and $\Theta$ appear in the behaviors at infinity (\ref{psi-asy}) and
at the origin (\ref{psi-0}), respectively. The jump matrices in the RH problem include four parameters (Stokes multipliers)
$s_j$, $j=1,\ldots, 4$, satisfying
\begin{equation} \label{stokes}
    (1 + s_2 s_3) e^{2 \pi i \, \Theta_\infty} + [s_1 s_4 + (1 + s_3 s_4)(1 + s_1 s_2)]
    e^{-2 \pi i \, \Theta_\infty} = 2 \cos{2 \pi \Theta}.
\end{equation}
The RH problem for $\Psi$ is (see \cite[Section 5.1]{fikn} and \cite{fma})
\begin{enumerate}
\item[(a)] $\Psi(\l, s)$ is analytic for $ \l \in \mathbb{C} \setminus
(\mathbb{R} \cup i \, \mathbb{R})$; see Figure~\ref{psicontour};
\item[(b)] on the contours $\mathbb{R} \cup i \, \mathbb{R}$ (with
orientation as in Figure~\ref{psicontour}),
\begin{align}
    \Psi_+(\l,s) & =  \Psi_-(\l,s) \, S_1, \hspace{55pt} \textrm{for } \l \in \mathbb{R}_+; \label{psi-rhb} \\
    \Psi_+(\l,s) & =  \Psi_-(\l,s) \, S_2, \hspace{55pt} \textrm{for } \l \in i \, \mathbb{R}_+; \\
    \Psi_+(\l,s) & =  \Psi_-(\l,s) \, S_3, \hspace{55pt} \textrm{for } \l \in \mathbb{R}_-; \\
    \Psi_+(\l,s) & =  \Psi_-(\l,s) \, S_4 \, e^{2 \pi i \Theta_\infty \sigma_3}, \quad \textrm{for } \l \in i \, \mathbb{R}_-,
\end{align}
where
\begin{equation} \label{psi-sj}
    \begin{array}{ll}
        S_1 = \left( \begin{matrix} 1 & 0 \\ s_1 & 1 \end{matrix} \right), & \qquad
        S_2 = \left( \begin{matrix} 1 & s_2 \\ 0 & 1 \end{matrix} \right), \\
        S_3 = \left( \begin{matrix} 1 & 0 \\ s_3 & 1 \end{matrix} \right), & \qquad
        S_4 = \left( \begin{matrix} 1 & s_4 \\ 0 & 1 \end{matrix} \right);
\end{array}
\end{equation}
\item[(c)] as $\l \to \infty$:
\begin{equation} \label{psi-asy}
    \Psi(\l,s) = \left( I + \frac{\Psi_{-1}(s)}{\l} + \frac{\Psi_{-2}(s)}{\l^2} +
    O\left( \frac{1}{\l^3} \right) \right) e^{(\frac{\l^2}{2} + s \l) \sigma_3} \
    \l^{-\Theta_\infty \sigma_3},
\end{equation}
where $\l^{-\Theta_\infty}$ is defined with a branch cut along $i \, \mathbb{R}_-$;

\item[(d)] as $\l \to 0$:
\begin{equation} \label{psi-0}
    \Psi(\l,s) \mathcal{C}^{-1} \l^{-\Theta \sigma_3} = O(1)  \qquad \textrm{for } \re \l > 0 \ \textrm{and} \ \im \l < 0 .
\end{equation}
where $\l^{\Theta}$ is defined with a branch cut along $i \, \mathbb{R}_-$ and the connection matrix $\mathcal{C}$ is any
invertible matrix satisfying
\begin{equation} \label{c-def}
    S_1 S_2 S_3 S_4 e^{2 \pi i \Theta_\infty \sigma_3} = \mathcal{C}^{-1} e^{-2 \pi i \Theta \sigma_3} \mathcal{C}.
\end{equation}
\end{enumerate}

Then, from Theorem 5.2 in \cite{fikn}, we obtain a  solution of PIV (\ref{PIV}) with constants $\Theta$ and $\Theta_{\infty}$,
by putting
\begin{equation} \label{pivu}
    u(s) :=  -2s - \frac{d}{ds} \log \Big((\Psi_{-1})(s)\Big)_{12}.
\end{equation}
The solution depends on the constants $s_j$'s in (\ref{psi-sj}), which as explained should satisfy the equation (\ref{stokes}).
The equation (\ref{stokes}) is invariant under the transformation
\begin{equation} \label{sm-trans}
    \{ s_1, s_2, s_3, s_4 \} \mapsto \{ d s_1, d^{-1} s_2, d s_3, d^{-1} s_4 \},
    \qquad d \neq 0.
\end{equation}
The $\Psi$ function then transforms as $\Psi \mapsto d^{-\sigma_3/2} \Psi d^{\sigma_3/2}$ which leaves the solution (\ref{pivu})
invariant under the transformation (\ref{sm-trans}). The connection matrix $\mathcal{C}$ transforms as $\mathcal{C} \mapsto
d^{-\sigma_3/2} \mathcal{C} \, d^{\sigma_3/2}$. The PIV solution (\ref{pivu}) is a meromorphic function of $s$ and the RH
problem for $\Psi$ has a solution if and only if $s$ is not a pole of $u$.

The special solution of (\ref{PIV}) that will appear in our results corresponds to the special Stokes multipliers
\begin{equation} \label{case0-sm}
\begin{array}{ll}
    s_1 = (e^{ (\Theta_{\infty} - \Theta ) \pi i} -e^{-(\Theta_{\infty}-\Theta) \pi i}) e^{\Theta_{\infty} \pi i}, &
    s_2 = e^{-\Theta \pi i}, \\
    s_3 = -(e^{(\Theta_{\infty} + \Theta) \pi i} - e^{-(\Theta_{\infty}+\Theta) \pi i}) e^{-\Theta_{\infty}\pi i}, &
    s_4 = -e^{(2 \Theta_{\infty} + \Theta)\pi i}.
\end{array}
\end{equation}
where
\begin{equation}
\Theta = -b \quad \textrm{and} \quad \Theta_{\infty} = \nu + b.
\end{equation}
It is easy to check that (\ref{case0-sm}) indeed satisfies the relation (\ref{stokes}). Furthermore, by (\ref{c-def}), the
corresponding connection matrix $\mathcal{C}$ is
\begin{equation*}
\mathcal{C} = \left( \begin{matrix} r_1 & 0 \\ 0 & r_1^{-1} \end{matrix} \right)  \left( \begin{matrix} 1 & 0 \\ - e^{(2
\Theta_\infty - \Theta) \pi i} & 1
\end{matrix} \right) \left( \begin{matrix} 1 & r_2 \\ 0 & 1 \end{matrix} \right), \quad r_1 \neq 0
\end{equation*}
and $r_1$ and $r_2$ are arbitrary constants satisfying $r_2 = 0, \hbox{if } b = m/2, m \in \mathbb{Z}. $ Throughout this paper,
we will choose $r_1 \equiv 1$ and $r_2 \equiv 0$, which means
\begin{equation} \label{c-explicit}
\mathcal{C} =  \left( \begin{matrix} 1 & 0 \\ - e^{(2 \Theta_\infty - \Theta) \pi i} & 1
\end{matrix} \right),
\end{equation}
or after a transformation (\ref{sm-trans})
\begin{equation} \label{c-explicit-2}
\mathcal{C} =  \left( \begin{matrix} 1 & 0 \\ - d e^{(2 \Theta_\infty - \Theta) \pi i} & 1
\end{matrix} \right).
\end{equation}


\subsection{Statement of results}

The asymptotic formulas for the recurrence coefficients $a_n$ and $b_n$ in (\ref{3term}) are given as follows:
\begin{thm} \label{thm1}
Suppose $b \in \mathbb R$, $\nu \in \mathbb R \setminus \mathbb N_0$ and put
\begin{equation}
    \Theta=-b, \qquad \Theta_\infty = \nu + b .
\end{equation}
Let $u(s)$ be the special solution of PIV corresponding to the Stokes multipliers \eqref{case0-sm} and let $K(s)$ be defined in
(\ref{k-def}). Assume
\begin{equation} \label{alpha-N-def}
    \alpha = - n + \nu, \qquad N = n + \sqrt{2} \, L n^{1/2}
\end{equation}
where $L$ is not a pole of $u(s)$. Then, for every large enough $n$, the monic polynomial $\pi_n$ of degree $n$, satisfying
\[ \int_{\Sigma} \pi_n(z) z^{j-\alpha} e^{-Nz} (z-1)^{2b} dz = 0, \qquad j=0,1, \ldots, n-1, \]
uniquely exists. The polynomials satisfy the recurrence relation \eqref{3term} with recurrence coefficients $a_n$ and $b_n$
satisfying
\begin{equation} \label{an-final}
    a_n = \frac{1}{n} \Big( \nu - K(L) \Big)+ O(n^{-3/2}) \qquad \hbox{as } n \to \infty
\end{equation}
and, if $K(L) \neq \nu$,
\begin{equation} \label{bn-final}
    b_n = 1 + \frac{\sqrt{2}}{\sqrt{n}} \left[ \frac{K(L) \Big( K(L) + 2b \Big)}{u(L) \Big( K(L) - \nu \Big)} - L
\right] + O(n^{-1}) \qquad \hbox{as } n \to \infty.
\end{equation}
\end{thm}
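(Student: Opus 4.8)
The plan is a Deift/Zhou steepest descent analysis of the Riemann--Hilbert (RH) problem for the orthogonal polynomials, in the spirit of \cite{dkmvz,dkmvz2,km1,km2,kmm}. Let $Y(z)$ be the standard $2\times2$ matrix built from $\pi_n$, its Cauchy transform against $w$ on $\Sigma$, and $\pi_{n-1}$, normalized so that $Y(z)=\bigl(I+Y_1z^{-1}+Y_2z^{-2}+\cdots\bigr)z^{n\sigma_3}$ as $z\to\infty$. Existence and uniqueness of $\pi_n$ for large $n$ will be a by-product of the analysis, and the recurrence coefficients are recovered from
\[
 a_n=(Y_1)_{12}\,(Y_1)_{21},\qquad b_n=\frac{(Y_2)_{12}}{(Y_1)_{12}}-(Y_1)_{22},
\]
where $\operatorname{tr}Y_1=0$ by $\det Y\equiv1$. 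Hence the theorem reduces to an expansion of $Y_1$ and $Y_2$ to sufficiently high order in $n^{-1/2}$, obtained through the usual chain of invertible transformations $Y\mapsto T\mapsto S\mapsto R$.

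First I would introduce a $g$-function adapted to the external field generated by $z^{-n}e^{-Nz}$ with $N/n\to1$, and set $T(z)=e^{\frac{n\ell}{2}\sigma_3}\,Y(z)\,e^{-ng(z)\sigma_3}\,e^{-\frac{n\ell}{2}\sigma_3}$ to normalize at $\infty$; the relevant equilibrium problem places its support on a deformation of the Szeg\H{o} curve $\mathcal S$ of \eqref{szego-curve}, onto which the contour $\Sigma$ is deformed. Opening lenses around $\mathcal S$ produces $S$, whose jumps on the lens boundaries are exponentially close to $I$ away from the critical point $z=1$. Since $\mathcal S$ stays away from $0$ and $\infty$, it then remains to construct a global parametrix $P^{(\infty)}$ solving the model problem with jump on $\mathcal S$ --- done explicitly with a scalar Szeg\H{o}-type function absorbing the fixed factors $z^{\nu}$ and $(z-1)^{2b}$ --- and a local parametrix near $z=1$.

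The local parametrix $P$ in a fixed disk $D$ around $z=1$ is the new ingredient and the heart of the argument. One picks a conformal map $\zeta(z)$ from $D$ to a neighborhood of $0$ in the $\lambda$-plane so that the relevant phase matches $\tfrac{\lambda^2}{2}+s\lambda$ after the rescaling $\lambda=\sqrt n\,\zeta(z)$, and sets
\[
 P(z)=E_n(z)\,\Psi\!\left(\sqrt n\,\zeta(z),s_n\right)e^{(\frac{n}{2}\zeta(z)^2+\sqrt n\, s_n\zeta(z))\sigma_3}\,W_n(z),
\]
where $\Psi$ is the PIV $\Psi$-function of Section~\ref{rh-piv}, $W_n$ is an explicit diagonal matrix built from the local branches of $z^{\alpha}$, $e^{-Nz}$, $(z-1)^{2b}$ near $z=1$, $E_n$ is an analytic prefactor forcing $P\,(P^{(\infty)})^{-1}=I+O(n^{-1/2})$ on $\partial D$, and $s_n\to L$ as $n\to\infty$ because of the scaling $N=n+\sqrt2\,Ln^{1/2}$ in \eqref{alpha-N-def}. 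Matching the jump contour and jump matrices of $S$ inside $D$ against those of $\Psi$ in Figure~\ref{psicontour} forces the identifications $\Theta=-b$, $\Theta_\infty=\nu+b$, pins the Stokes multipliers down to exactly \eqref{case0-sm}, and fixes the connection matrix as in \eqref{c-explicit}--\eqref{c-explicit-2}; here the contributions of $z^{\alpha}$, $e^{-Nz}$ and $(z-1)^{2b}$ must be tracked carefully, the factor $(z-1)^{2b}$ being precisely what makes $\Theta$ nonzero and hence replaces the parabolic-cylinder behavior of \cite{dev,kmm} by PIV behavior.

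Finally, setting $R$ equal to the fully transformed $Y$ times the inverse of the parametrices produces a function with jumps $I+O(n^{-1/2})$ on a fixed contour and $R(\infty)=I$; small-norm RH theory then gives $R=I+R^{(1)}(z)n^{-1/2}+R^{(2)}(z)n^{-1}+\cdots$ uniformly, and solvability of this problem yields the asserted existence of $\pi_n$ for large $n$, provided $L$ is not a pole of $u$. The coefficients $R^{(k)}$ are residue contributions at $z=1$ assembled from the Taylor data of $\zeta$ and $E_n$ together with $\Psi_{-1}(L)$ and $\Psi_{-2}(L)$ from the expansion \eqref{psi-asy}; through \eqref{pivu} and \eqref{k-def} these are expressed in terms of $u(L)$ and $K(L)$. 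Unwinding $Y$ in terms of $R$, $P^{(\infty)}$ and the $g$-function and reading off $Y_1,Y_2$ then produces \eqref{an-final}--\eqref{bn-final}, which collapse to \eqref{Laguerrerecurrence2} when $b=0$. I expect the main obstacle to be exactly the $z=1$ parametrix: verifying that the PIV $\Psi$-RH problem of Section~\ref{rh-piv} genuinely reproduces the local jumps of $S$ with the specific Stokes data \eqref{case0-sm} and connection matrix \eqref{c-explicit}, controlling the $n$-dependence of $s_n$ and $E_n$, and carrying the subleading $n^{-1/2}$ terms through with the correct constants; by comparison the $\mathcal S$-parametrix, the lens estimates, and the final small-norm step are routine.
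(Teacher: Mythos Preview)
Your overall architecture is right --- Deift/Zhou steepest descent with a PIV local parametrix at $z=1$, Stokes data forced to \eqref{case0-sm}, and extraction of $a_n,b_n$ from the small-norm expansion of $R$ --- and it matches the paper in spirit. But several concrete choices in your outline diverge from what the paper actually does, and at least one of them is flagged by the authors as problematic.

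First, you propose to deform $\Sigma$ onto the Szeg\H{o} curve $\mathcal S$ and open lenses around it. The paper instead works on $n$-dependent contours $\Gamma_{0,n}$ (and, in Case~I, also on an interval $[\beta_{1,n},\beta_{2,n}]$) that tend to $\mathcal S$ as $n\to\infty$; the authors state explicitly that they tried the fixed-$\mathcal S$ approach and ``were not able to handle all difficulties this way.'' Relatedly, the paper splits into Case~I ($\nu>0$) and Case~II ($\nu<0$), following \cite{km2} and \cite{km1} respectively, with an extra transformation $Y\mapsto U$ in Case~I; your chain $Y\mapsto T\mapsto S\mapsto R$ elides this.

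Second, there is no nontrivial global parametrix $P^{(\infty)}$ in the paper. After the $g$-function and lens-opening steps the jumps away from $z=1$ are already exponentially close to $I$, so the outer model is just $I$; the only residual mismatch is a diagonal factor $n^{\frac{b}{2}\sigma_3}$ that the paper carries in the matching condition \eqref{p-0condition} and then conjugates away in the final transformation \eqref{r-s}. Your proposed scalar Szeg\H{o}-type $P^{(\infty)}$ absorbing $z^{\nu}$ and $(z-1)^{2b}$ is not how the analysis proceeds here, and the authors remark that the $n^{\frac{b}{2}\sigma_3}$ factor ``comes out of our construction \ldots\ and we are not able to remove or simplify it.''

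Third, a smaller but nontrivial point: in the paper's local parametrix the second argument of $\Psi$ is not a single $s_n\to L$ but the $z$-dependent quantity $L\,(z-1)/(\sqrt{2}\,f(z))$ with $f(z)=[z-1-\log z]^{1/2}$; this requires that the asymptotics \eqref{psi-asy} hold uniformly for $s$ in a compact set away from poles of $u$, and feeds into the explicit residue formulas for $R^{(-1)},R^{(-2)}$. Your description with a scalar $s_n$ would miss this dependence.
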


In the Theorem \ref{thm1}, (\ref{bn-final}) still makes sense when $u(L) = 0$. In fact, from the PIV equation in (\ref{PIV}), it
can be verified that
$$u'(L) = \pm 4 \Theta = \mp 4b \qquad \textrm{if} \quad u(L) = 0.$$
From (\ref{k-def}), then we have $K(L) = 0$ or $K(L) = 2 \Theta = -2b$. Then by L'Hospital's rule, we get from (\ref{bn-final})
\begin{eqnarray}
b_n & = 1 + \frac{\sqrt{2}}{\sqrt{n}} \left[ \frac{K'(L)}{2 \nu} - L \right] + O(n^{-1}), \qquad \hbox{if }
u(L)= 0 \hbox{ and } u'(L) = -4b, \\
b_n & = 1 + \frac{\sqrt{2}}{\sqrt{n}} \left[ \frac{K'(L)}{2 (\nu + 2b)} - L \right] + O(n^{-1}), \qquad \hbox{if } u(L)= 0
\hbox{ and } u'(L) = 4b. \label{bn-final-2}
\end{eqnarray}
Note that the factor $(\nu + 2b)$ in (\ref{bn-final-2}) can not be zero because of the assumption $K(L) \neq \nu$ in
(\ref{bn-final}).

When $L$ is not a pole of $u(s)$, it is still possible that $K(L)=\nu$. In this case, the RH problem for $\Psi$ in
(\ref{psi-rhb})--(\ref{psi-0}) is solvable. Moreover, for large enough $n$, the monic polynomial $\pi_n$ still uniquely exists.
However something strange occurs for the recurrence coefficient $b_n$. Note that (\ref{bn-final}) no longer holds when
$K(L)=\nu$. In fact, as $n \to \infty$, $b_n$ does not tend to 1 but to another constant, which means there is a sudden change
from $K(L) \neq \nu$ to $K(L)=\nu$. At this time, we can not explain why this phenomenon happens.

In \cite{murata}, Murata found the following Schlesinger transformation
\begin{equation} \label{schlesinger}
u^*(s) = - \frac{ 2 \, K(s) \Big( K(s) + 2b \Big)}{u(s) \Big( K(s) - \nu \Big)},
\end{equation}
where $u^*(s)$ is a solution to the PIV equation with parameters $\Theta = -b, \Theta_\infty = \nu + b + 1$. Then, with
(\ref{schlesinger}), one can rewrite (\ref{bn-final}) as
\begin{equation}
 b_n = 1 - \frac{\sqrt{2}}{\sqrt{n}} \biggl( \frac{1}{2} \, u^*(L) + L
\biggr) + O(n^{-1}), \qquad n \to \infty,
\end{equation}
whenever $L$ is not a pole of $u^*(s)$.

The case $\nu \in \mathbb N_0$ is special and we do not understand it at this moment. That this is a particular situation can
already be seen in the Laguerre case treated in \cite{km2}, since it corresponds to Laguerre polynomials with a zero of large
order at the origin, and the RH problem stated in Section \ref{rh-op} below does not have a solution in case $\nu \in \mathbb
N_0$ and $n
> \nu$, see Proposition 2.2 of \cite{km1}.

Theorem \ref{thm1} could still be true in case $\nu \in \mathbb N_0$, but our method of proof fails. Indeed, in the
transformation (\ref{t-y}) we make essential use of the fact that $\nu$ is not an integer. Therefore we exclude the case $\nu
\in \mathbb N_0$ from our considerations.

Using the Riemann-Hilbert method, it is possible to obtain asymptotic expansion for polynomials $\pi_n(z)$ in all regions of the
full complex $z$-plane as $n \rightarrow \infty$. Near the point $z = 1$, the expansion involves the $\Psi$-functions given in
(\ref{PIVsystem}). We will not discuss it in this paper.

For the polynomials, we restrict ourselves to the asymptotic zero distribution.

\begin{thm} \label{thm3}
Under the same assumptions as in Theorem \ref{thm1} and $K(L) \neq \nu$, we have that all the zeros of polynomials $\pi_{n}(z)$
tend to the Szeg\H{o} curve $\mathcal{S}$ given in \eqref{szego-curve}. More precisely, for any neighborhood $U(\mathcal{S})$ of
$\mathcal{S}$, there exists a positive integer $n_0$ such that for any $n > n_0$, all zeros of $\pi_n(z)$ lie in
$U(\mathcal{S})$.
\end{thm}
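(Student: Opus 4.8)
The plan is to locate the zeros of $\pi_n$ directly from the Deift/Zhou steepest descent analysis that already underlies Theorem~\ref{thm1}, exploiting that $\pi_n = Y_{11}$, where $Y$ solves the Riemann--Hilbert problem for the orthogonal polynomials (Section~\ref{rh-op}). Under the standing hypothesis $K(L)\neq\nu$ the analysis of the body of the paper applies: a finite chain of explicit invertible transformations carries $Y$ into a matrix $R$ with $R(z)=I+O(n^{-1/2})$ uniformly in $z$, the transformations being governed by the $g$-function of the Szeg\H{o}-curve measure, a global parametrix $N$ assembled from Szeg\H{o} functions times an algebraic prefactor with a single branch point at $z=1$, and a local parametrix on a disk $D(1,\delta)$ built from the Painlev\'e~IV $\Psi$-functions. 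The first step is to invert this chain so as to produce, on each of the two components of $\mathbb{C}\setminus\mathcal{S}$, an ``outer'' asymptotic formula
\[ \pi_n(z) = \Phi_n(z)\bigl(H(z) + O(n^{-1/2})\bigr), \qquad n\to\infty, \]
valid uniformly on compact subsets that avoid $D(1,\delta)$ and the lens opened along $\mathcal{S}$, in which $\Phi_n = e^{nG}$ is a nowhere-vanishing scalar prefactor ($G$ the relevant branch of the $g$-function combination) and $H$ is holomorphic; in the unbounded component $H$ continues to a punctured neighbourhood of $\infty$ and $\Phi_n(z)H(z) = z^n(1+O(1/z))$, so that $\pi_n(z) = z^n(1+o(1))$ uniformly for $|z|$ large.

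The crux — and the step I expect to cause the most trouble — is showing that $H$ has no zeros off $\mathcal{S}\cup\{1\}$. This reduces to verifying that the relevant entry, or linear combination of entries, of the global parametrix $N$ is zero-free there. It should follow because the Szeg\H{o}-type factors entering $N$ are exponentials of Cauchy transforms of $\log w$, hence non-vanishing, while the algebraic prefactor is a fourth root of a M\"obius function whose only zero and pole sit at $z=1$ and $\infty$; what requires care is the bookkeeping through all the transformations to confirm that these properties survive and that no cancellation degenerates $H$. It is also here that $K(L)\neq\nu$ enters: it keeps the matching of the Painlev\'e~IV parametrix with $N$ in standard form and, via Theorem~\ref{thm1}, forces $a_n\to 0$; when $K(L)=\nu$ the limiting recurrence coefficients are different and the zeros need not collapse onto $\mathcal{S}$.

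Granting the non-vanishing of $H$, the conclusion is a compactness argument. Since $\mathbb{C}\setminus U(\mathcal{S})$ is closed and disjoint from the compact curve $\mathcal{S}$, their distance $\rho_0$ is positive, and every point within distance $\rho_0$ of $\mathcal{S}$ lies in $U(\mathcal{S})$; we may therefore fix the lens width and the radius $\delta$ both smaller than $\rho_0$, so that the lens and $D(1,\delta)$ lie inside $U(\mathcal{S})$ and the whole of $\mathbb{C}\setminus U(\mathcal{S})$ falls in the region where the outer formula holds. On the compact set $\{\,z\notin U(\mathcal{S}) : |z|\le M\,\}$, for any fixed large $M$, the function $H$ is continuous and non-vanishing on each component, so $|H|\ge c>0$ there; combined with the uniform error bound this gives $|\Phi_n(z)^{-1}\pi_n(z)|\ge c/2$ for $n$ large, hence $\pi_n(z)\neq 0$. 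For $|z|>M$ with $M$ chosen large, $z$ lies far out in the unbounded component and $\pi_n(z)=z^n(1+o(1))\neq 0$. Together these show that for all sufficiently large $n$ every zero of $\pi_n$ lies in $U(\mathcal{S})$, which is the assertion; existence and uniqueness of $\pi_n$ for large $n$ is inherited from Theorem~\ref{thm1}.
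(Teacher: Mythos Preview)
Your proposal rests on a picture of the steepest descent analysis that does not match the one carried out in this paper. There is no nontrivial global parametrix $N$ here: look at the final transformation (\ref{r-s}), where for $z$ outside $B_\delta$ one simply has $R(z)=n^{\frac{b}{2}\sigma_3}S(z)n^{-\frac{b}{2}\sigma_3}$. The ``outer parametrix'' is the constant diagonal $n^{\frac{b}{2}\sigma_3}$, not something built from Szeg\H{o} functions or a fourth-root M\"obius prefactor. This is a genuine structural feature of the problem: the support of the equilibrium measure $\mu_n$ collapses onto the Szeg\H{o} curve as $n\to\infty$, and all the nontrivial behaviour is captured by the Painlev\'e~IV local parametrix at $z=1$.

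As a consequence, your outer formula $\pi_n=\Phi_n(H+O(n^{-1/2}))$ with an $n$-independent nonvanishing $H$ is correct only in the unbounded component $\Omega_\infty^S$, where $H\equiv 1$ (equation (\ref{pi-zero1}) in the paper). In the bounded component $\Omega_0$ inside the Szeg\H{o} curve, unwinding $T\mapsto S$ produces a two-term formula (see (\ref{pi-region0})) in which one term carries an exponentially small factor $e^{-2n\phi_{t,n}(z)}$ and the other is proportional to $(R(z))_{12}$. But $(R(z))_{12}$ is itself of order $n^{-1/2}$, with leading coefficient $\frac{\sqrt{2}}{\sqrt{n}(z-1)y(L)}(K(L)-\nu)\rho^{-2}$ coming from Lemma~\ref{r-1-lemma}. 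This is precisely where the hypothesis $K(L)\neq\nu$ enters: it guarantees that this leading coefficient is nonzero, so that on $\Omega_0\setminus(B_\delta\cup U(\mathcal{S}))$ the second term dominates and $\pi_n$ does not vanish. Your proposed role for the condition---matching of parametrices and $a_n\to 0$---is not the mechanism; without $K(L)\neq\nu$ the off-diagonal entry $(R)_{12}$ could degenerate and the argument in the interior would fail. The compactness wrap-up you outline is fine once the correct interior formula is in hand, but the heart of the argument is this asymptotic for $(R)_{12}$, not the nonvanishing of any global-parametrix entry.
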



\subsubsection*{Outline of the rest of the paper}

In Section \ref{rh-op}, we formulate the RH problem for orthogonal polynomials $\pi_n$, which is the starting point of the
asymptotic analysis.  We have to distinguish the two cases
\begin{description}
\item[Case I:] $\nu > 0$ and $\nu \not\in \mathbb N$,
\item[Case II:] $\nu < 0$,
\end{description}
since the steepest descent analysis is different for the two cases. The two cases are dealt with in Sections \ref{case1} and
\ref{case2} respectively. Together they contain the proof of Theorem \ref{thm1}. In Section \ref{thm3-proof}, we prove Theorem
\ref{thm3}. Finally in Section \ref{special}, we show that, with certain parameters $\Theta$ and $\Theta_\infty$, the special
solutions of PIV studied in this paper are given in terms of parabolic cylinder functions.

Since the two cases Case I and Case II are independent of each other, we are going to use the same notation for the functions
and variables. We trust that this will not lead to any confusion.


\section{The RH problem for orthogonal polynomials} \label{rh-op}

Consider the following RH problem for a $2 \times 2$ matrix valued function $Y: \mathbb{C} \setminus \Sigma \mapsto
\mathbb{C}^{2 \times 2}$:

\begin{enumerate}
\item[(a)] $Y(z)$ is analytic for $z \in \mathbb{C} \setminus
\Sigma$; see Figure~\ref{Sigmacontour}. Here $\Sigma$ is an oriented curve whose $+$-side and $-$-side are on the left and right
while traversing the contour, respectively, \item[(b)] $Y(z)$ possesses continuous boundary values $Y_{\pm}(z)$ such that
\begin{equation}
Y_+(z) = Y_-(z) \left( \begin{matrix} 1 & w(z) \\ 0 & 1 \end{matrix} \right) \quad \textrm{for } z \in \Sigma.
\end{equation}
\item[(c)] $Y(z)$ has the following behavior as $z \to \infty$
\begin{equation*}
Y(z)=\left( I + O\left(\frac{1}{z}\right) \right) \left( {\begin{array}{cc} z^n & 0 \\
0 & z^{-n} \end{array}} \right).
\end{equation*}
\end{enumerate}

By a by now standard argument, originally due to Fokas, Its, and Kitaev \cite{fik}, the solution of the above RH problem, if it
exists, is uniquely given by
\begin{equation} \label{y-sol}
    Y(z) = \begin{pmatrix} \pi_n(z) & \displaystyle \frac{1}{2 \pi i}
        \int_\Sigma \frac{w(s) \, \pi_n(s)}{s-z} ds \\
        p_{n-1}(z) & \displaystyle \frac{1}{2 \pi i} \int_{\Sigma} \frac{ w(s) \, p_{n-1}(s)}{s-z} ds
        \end{pmatrix},
\end{equation}
where $\pi_n$ is a monic polynomial of degree $n$ satisfying (\ref{orthogonality}) and $p_{n-1}$ is a polynomial of degree $\leq
n-1$. The existence and uniqueness of the monic polynomial $\pi_n$ satisfying (\ref{orthogonality}) is equivalent to the
solvability of the RH problem.

The recurrence coefficients $a_n$ and $b_n$ can be expressed in terms of the solution of the RH problem.

\begin{prop} 
Assume that the RH problem has a solution $Y$ and write
\begin{equation} \label{Y-expansion}
    Y(z) = \left( I + \frac{Y_{-1}}{z} + \frac{Y_{-2}}{z^2} + O(z^{-3}) \right)
    \begin{pmatrix} z^n & 0 \\ 0 & z^{-n} \end{pmatrix}
\end{equation}
as $z\to \infty$. Define
\begin{equation} \label{an-eqn}
a_n = \Big( Y_{-1} \Big)_{12} \Big( Y_{-1} \Big)_{21}
\end{equation}
and
\begin{equation} \label{bn-eqn}
b_n = \frac{\Big( Y_{-2} \Big)_{12}}{ \Big( Y_{-1} \Big)_{12}} - \Big( Y_{-1} \Big)_{22},
\end{equation}
where the subscript $ij$ denotes  the $(i,j)$ entry of the corresponding matrix. If $a_n \neq 0$, then the monic orthogonal
polynomials $\pi_{n+1}$, $\pi_n$ and $\pi_{n-1}$ (of degrees $n+1$, $n$, and $n-1$) with respect to $w(z)$ on $\Sigma$, exist
and they satisfy the recurrence relation
\begin{equation}
\pi_{n+1}(z) = \Big( z- b_n \Big)\pi_{n}(z) - a_n \, \pi_{n-1}(z).
\end{equation}
\end{prop}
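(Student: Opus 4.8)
The plan is to read out of the explicit formula \eqref{y-sol} what the coefficients $Y_{-1},Y_{-2}$ in \eqref{Y-expansion} encode, and then to produce $\pi_{n+1}$ together with the three-term relation by left-multiplying $Y$ by an explicit linear matrix polynomial. First I would insert the geometric series $\frac{1}{s-z}=-\sum_{k\ge0}s^kz^{-k-1}$ into \eqref{y-sol} and compare with \eqref{Y-expansion}. Since \eqref{orthogonality} annihilates the first $n$ moments of $\pi_n$, this gives at once $(Y_{-1})_{12}=-h_n$ and $(Y_{-2})_{12}=-\frac{1}{2\pi i}\int_\Sigma s^{n+1}\pi_n w\,ds$, where $h_n:=\frac{1}{2\pi i}\int_\Sigma\pi_n^2 w\,ds$ (using that $\pi_n$ is monic). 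From the second column one likewise reads off that $p_{n-1}:=\big(Y\big)_{21}$ is a polynomial of degree $\le n-1$ with leading coefficient $(Y_{-1})_{21}$, and --- by expanding the Cauchy transform in the $(2,2)$ entry and matching the powers $z^{-1},\dots,z^{-n}$ --- that $p_{n-1}$ is orthogonal to $1,\dots,z^{n-2}$ with respect to $w$ and satisfies $-\frac{1}{2\pi i}\int_\Sigma s^{n-1}p_{n-1} w\,ds=1$. Under the hypothesis $a_n=(Y_{-1})_{12}(Y_{-1})_{21}\ne0$ both factors are nonzero, so $h_n\ne0$ and $p_{n-1}$ has exact degree $n-1$; hence $\pi_{n-1}:=(Y_{-1})_{21}^{-1}p_{n-1}$ is a monic polynomial of degree $n-1$ orthogonal to every polynomial of lower degree, which is exactly the claim that the monic orthogonal polynomial $\pi_{n-1}$ exists. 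Substituting this back into the last identity gives $(Y_{-1})_{21}=-h_{n-1}^{-1}$ with $h_{n-1}:=\frac{1}{2\pi i}\int_\Sigma\pi_{n-1}^2 w\,ds$, hence $a_n=h_n/h_{n-1}$ and $p_{n-1}=(Y_{-1})_{21}\pi_{n-1}$.

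Next I would set $M(z):=\left(\begin{smallmatrix}z-b_n & h_n\\ -h_n^{-1}&0\end{smallmatrix}\right)$ with $b_n$ as in \eqref{bn-eqn} (legitimate since $h_n\ne0$) and $\widehat Y:=MY$. Because $M$ is a polynomial matrix, $\widehat Y$ is analytic off $\Sigma$ and has the same jump as $Y$ on $\Sigma$, so it satisfies parts (a)--(b) of the RH problem with $n$ replaced by $n+1$. For part (c) one writes $Y(z)\,\mathrm{diag}(z^{-n},z^n)=I+Y_{-1}z^{-1}+Y_{-2}z^{-2}+\cdots$ and checks that $M(z)\bigl(I+Y_{-1}z^{-1}+Y_{-2}z^{-2}+\cdots\bigr)\,\mathrm{diag}(z^{-1},z)=I+O(z^{-1})$ as $z\to\infty$. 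The only entry that requires care is the $(1,2)$ one: its coefficient of $z$ is $(Y_{-1})_{12}+h_n=0$, and its coefficient of $z^0$ vanishes precisely because of the definition \eqref{bn-eqn} of $b_n$; the $(1,1)$, $(2,1)$, $(2,2)$ entries come out as $1+O(z^{-1})$, $O(z^{-1})$, $1+O(z^{-1})$ using only $(Y_{-1})_{12}=-h_n$. Thus $\widehat Y(z)=\bigl(I+O(z^{-1})\bigr)\,\mathrm{diag}(z^{n+1},z^{-n-1})$, so $\widehat Y$ solves the RH problem of parts (a)--(c) at degree $n+1$. By the standard Fokas--Its--Kitaev argument that produced \eqref{y-sol}, applied at degree $n+1$, the monic orthogonal polynomial $\pi_{n+1}$ therefore exists and equals $\big(\widehat Y\big)_{11}=(z-b_n)\pi_n+h_n p_{n-1}=(z-b_n)\pi_n-a_n\pi_{n-1}$, the last equality by $p_{n-1}=(Y_{-1})_{21}\pi_{n-1}$ and $h_n(Y_{-1})_{21}=-(Y_{-1})_{12}(Y_{-1})_{21}=-a_n$. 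Together with $\pi_n=\big(Y\big)_{11}$ and the existence of $\pi_{n-1}$ just established, this is the recurrence \eqref{3term}, completing the proof.

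The main obstacle --- really the only genuine computation --- is the part (c) check for $\widehat Y$, i.e.\ seeing that the particular combination $\frac{(Y_{-2})_{12}}{(Y_{-1})_{12}}-(Y_{-1})_{22}$ defining $b_n$ is exactly what clears the obstruction in the $(1,2)$ entry. An equivalent matrix-free route runs the same computation differently: put $q(z):=(z-b_n)\pi_n(z)-a_n\pi_{n-1}(z)$ and verify $\int_\Sigma q(s)\,s^j w(s)\,ds=0$ for $j=0,\dots,n$. The cases $j\le n-1$ are immediate from the orthogonality of $\pi_n$ and $\pi_{n-1}$, while the case $j=n$ holds precisely because of the choice \eqref{bn-eqn} of $b_n$, once one uses $a_n=h_n/h_{n-1}$, the value of $(Y_{-2})_{12}$ found above, and the trace relation $\mathrm{tr}\,Y_{-1}=0$ --- the latter a consequence of $\det Y\equiv1$, which holds since the jump in (b) is unimodular and (c) forces $\det Y\to1$ at infinity. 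In either presentation the remaining steps are routine bookkeeping.
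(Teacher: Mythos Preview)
Your argument is correct. The paper itself does not give a proof at all: it simply declares the result standard and refers the reader to Deift's book and to \cite[Proposition~2.4]{dk}. What you have written is precisely the standard argument those references contain --- expanding the Cauchy transforms in \eqref{y-sol} at infinity to identify $(Y_{-1})_{12}=-h_n$, $(Y_{-1})_{21}=-h_{n-1}^{-1}$, and then performing the Schlesinger shift $\widehat Y=MY$ with $M(z)=\left(\begin{smallmatrix}z-b_n & h_n\\ -h_n^{-1}&0\end{smallmatrix}\right)$ to reach the RH problem at degree $n+1$. Your check of condition~(c) for $\widehat Y$ is accurate, and the identification of the $(1,2)$ obstruction with the formula \eqref{bn-eqn} for $b_n$ is exactly the point. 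The alternative ``matrix-free'' route you sketch at the end (verifying directly that $q=(z-b_n)\pi_n-a_n\pi_{n-1}$ is orthogonal to $1,z,\dots,z^n$, using $\mathrm{tr}\,Y_{-1}=0$ from $\det Y\equiv1$) is also valid and is another common way to present the same computation.
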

\begin{proof}
The proof is standard, see \cite{deift}. For the above precise form, see also \cite[Proposition 2.4]{dk}.
\end{proof}

As already said before, we are going to apply the Deift/Zhou steepest descent method to the above RH problem. In Sections
\ref{case1} we treat the case $\nu > 0$ and in Section \ref{case2} the case $\nu < 0$. Before we embark on the steepest descent
analysis let us say a few words about the method. The method consists of a number of explicit transformations, which in this
paper take the form $Y \mapsto U \mapsto T \mapsto S \mapsto R$, which lead to a ``simple'' RH problem for $R$. Since the works
of Deift et al. \cite{dkmvz,dkmvz2,deift} it is now clear what the main steps should be in the analysis of the RH problem
associated with orthogonal polynomials, namely
\begin{itemize}
\item normalization of the RH problem with the so-called $g$-function;
\item opening of the lens around the oscillatory region;
\item construction of global and local parametrices.
\end{itemize}
In any extension of the method (such as the one in this paper) these issues appear and it is unfortunate that all details have
to be checked each time, since we do not have, and maybe cannot expect, a master theorem giving general conditions under which
the method will work.

In the present paper we are dealing with orthogonality on a contour in the complex plane. To handle this feature we largely
follow the steepest descent analysis done for Laguerre polynomials with large negative parameters in \cite{km2} (for $\nu > 0$)
and in \cite{km1} (for $\nu < 0$). This will lead to an analysis on varying $n$-dependent curves depending on $A_n = 1 -
\frac{\nu}{n}$. For $n\to \infty$ the curves tend to the Szeg\H{o} curve. The precise analysis is rather delicate near the
critical point $z=1$, since we have to arrive at a local RH problem that can be modelled by the RH problem associated with
Painlev\'e IV. A simpler and maybe more natural approach would be to avoid the varying curves and work with the Szeg\H{o} curve
from the beginning. We tried to do this but we were not able to handle all difficulties this way.


\section{Case I: $\nu > 0$ and $\nu \not\in \mathbb{N}$} \label{case1}

\subsection{Introduction}

We assume $\nu > 0$ and $\nu \not\in \mathbb N$. Throughout we write
\begin{equation} \label{t-def-2}
    t:= \frac{N}{n}= 1 + \sqrt{2} \, L n^{-1/2}.
\end{equation}
Let
\begin{equation}
\label{a-def-2}
    A_n := - \frac{\alpha}{n}= 1 - \frac{\nu}{n}.
\end{equation}
Since we will concentrate on the asymptotics as $n \rightarrow \infty$, then without loss of generality throughout this paper we
assume $n$ is large enough. Thus, $A_n \in (0, 1)$. Following the Riemann-Hilbert steepest descent analysis in \cite{km2}, we
define
\begin{align}
    \label{beta1-def}
    \beta_{1,n} & := 2 - A_n - 2 \sqrt{1-A_n} = 1 + \frac{\nu}{n} - 2 \sqrt{\frac{\nu}{n}}, \\
    \label{beta2-def}
    \beta_{2,n} & := 2 - A_n + 2 \sqrt{1-A_n} = 1 + \frac{\nu}{n} + 2 \sqrt{\frac{\nu}{n}}
\end{align}
and
\begin{equation} \label{Rn-def}
    R_n(z) := \sqrt{(z-\beta_{1,n})(z-\beta_{2,n})}, \quad z \in \mathbb{C} \setminus [\beta_{1,n},\beta_{2,n}],
\end{equation}
where $R_n(z) \sim z$ as $z \to \infty$. Let
\begin{equation} \label{phi-1n}
    \phi_{n}(z) = \frac{1}{2} \int_{\beta_{1,n}}^z \frac{R_n(s)}{s} ds, \qquad
    z \in \mathbb{C} \setminus ( (-\infty,0] \cup [\beta_{1,n}, \infty) ),
\end{equation}
where the path of integration from $\beta_{1,n}$ to $z$ lies entirely in the region $\mathbb{C} \setminus ((-\infty,0] \cup
[\beta_{1,n}, \infty))$, except for the initial point $\beta_{1,n}$. The curves where $\re \phi_{n}(z)$ is constant are
trajectories of the quadratic differential (see Strebel \cite{strebel})
\begin{equation}
- \frac{R_n(s)^2}{s^2} ds^2  = - \frac{(s-\beta_{1,n})(s-\beta_{2,n})}{s^2} ds^2 ,
\end{equation}
which has two simple zeros at $\beta_{1,n}$ and $\beta_{2,n}$ and a double pole at 0. In \cite{km2} it is shown that there
exists a unique curve $\Gamma_{0,n}$ as follows:
\begin{defn} \label{gamma0}

The contour $\Gamma_{0,n}$ is a simple closed curve encircling 0 once, so that
\begin{equation} \label{phi-gamma0}
\re \phi_{n}(z) = 0 \qquad \hbox{for} \quad z \in \Gamma_{0,n};
\end{equation}
see Figure~\ref{contour-case01}.
\end{defn}

The curve $\Gamma_{0,n}$ depends on $n$ and it tends to the Szeg\H{o} curve $\mathcal{S}$ (\ref{szego-curve}) as $n \to \infty$.

\begin{lem} \label{gamma0-szego}
As $n \to \infty$, the curve $\Gamma_{0,n}$ tends to the Szeg\H{o} curve $\mathcal{S}$.
\end{lem}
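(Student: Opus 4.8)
The plan is to pass to the limit $n\to\infty$ in the function $\phi_n$, to recognise the Szeg\H{o} curve $\mathcal S$ as the relevant branch of the zero set of the limit of $\re\phi_n$, and then to transfer convergence of functions to convergence of curves. First I would compute the limit. Since $\beta_{1,n}=(1-\sqrt{\nu/n})^2\to1$ and $\beta_{2,n}=(1+\sqrt{\nu/n})^2\to1$, the branch of $R_n$ normalised by $R_n(z)\sim z$ at infinity satisfies $R_n(z)\to z-1$ uniformly on compact subsets of $\mathbb C$. Integrating from the endpoint and using $\int_{\beta_{1,n}}^1\frac{s-1}{s}\,ds=O(\nu/n)$, one gets $\phi_n(z)\to\phi(z):=\tfrac12\bigl(z-1-\operatorname{Log}z\bigr)$ (principal branch), uniformly on compact subsets of $\mathbb C\setminus\bigl((-\infty,0]\cup[1,\infty)\bigr)$. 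Since, as already used in \cite{km2}, $h_n:=\re\phi_n$ is continuous on $\mathbb C\setminus\{0\}$, harmonic off the slit $[\beta_{1,n},\beta_{2,n}]$ and identically $0$ on that slit, while $h:=\re\phi=\tfrac12\bigl(\re z-1-\log|z|\bigr)$ is harmonic on $\mathbb C\setminus\{0\}$, this upgrades to $h_n\to h$ uniformly on compact subsets of $\mathbb C\setminus\{0\}$, with $h_n(z)\to+\infty$ as $z\to0$ uniformly in $n$.

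Next I would identify the limiting zero set. One has $2h(z)=\re z-1-\log|z|=-\log\bigl|ze^{1-z}\bigr|$, so $\{h=0\}=\{\,|ze^{1-z}|=1\,\}$, and $\mathcal S$ is by definition the part of this set with $|z|\le1$; it is classical that this part is a closed analytic Jordan curve through $z=1$ enclosing the origin, while the remaining part $\mathcal U$ is an unbounded analytic arc contained in $\{|z|>1\}$, with $\mathcal S\cap\mathcal U=\{1\}$. From $\phi(z)=\tfrac14(z-1)^2+O\bigl((z-1)^3\bigr)$ it follows that $z=1$ is a nondegenerate saddle of $h$ (the four local zero-branches cross orthogonally there), that $\nabla h\ne0$ on $(\mathcal S\cup\mathcal U)\setminus\{1\}$, and that $h>0$ throughout $\operatorname{int}\mathcal S\setminus\{0\}$.

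Finally I would transfer convergence to the curves. Recall from \cite{km2} that $\Gamma_{0,n}$, together with the chord $[\beta_{1,n},\beta_{2,n}]$, bounds a domain $G_n\ni0$ with $h_n>0$ on $G_n$ and $h_n=0$ on $\partial G_n$. Because $h_n\to+\infty$ at $0$ uniformly, $\Gamma_{0,n}\cap\{|z|<r\}=\emptyset$ for some fixed $r>0$ and all large $n$; and because $h<0$ on the outer side of $\mathcal S$ away from $z=1$, while by the trajectory structure $h_n<0$ immediately outside $\Gamma_{0,n}$ away from $\beta_{1,n},\beta_{2,n}$, there is a fixed $R$ with $\Gamma_{0,n}\subset\{|z|\le R\}$ for all large $n$ — the point being that any arc in $\{h_n>0\}$ joining $0$ to a far-away point must cross the chord $[\beta_{1,n},\beta_{2,n}]$, whose length tends to $0$, and hence cannot stay in $G_n$. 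Granting this confinement, fix a neighbourhood $U\supset\mathcal S$ and $\delta>0$ with $\overline{B(1,\delta)}\subset U$; then $\Gamma_{0,n}\cap B(1,\delta)\subset U$ automatically, while on the compact set $K:=\{r\le|z|\le R\}\setminus B(1,\delta)$ the zero set of $h$ is $(\mathcal S\cup\mathcal U)\cap K$ with $\nabla h\ne0$, so for $n$ large $\{h_n=0\}\cap K$ lies in an arbitrarily thin tube about $\mathcal S\cup\mathcal U$; since $\partial G_n$ meets $K$ only along the $\mathcal S$-side, the arcs of $\Gamma_{0,n}$ inside $K$ lie in a thin tube about $\mathcal S$, hence in $U$. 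Thus $\Gamma_{0,n}\subset U$ for all large $n$; the reverse inclusion (that $\mathcal S$ lies in any fixed tube about $\Gamma_{0,n}$ eventually) follows from the same stability of nondegenerate level sets.

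The delicate point throughout is the critical point $z=1$, where the endpoints $\beta_{1,n},\beta_{2,n}$ coalesce, $R_n$ and $\phi_n$ fail to converge uniformly, and the limiting zero set $\{|ze^{1-z}|=1\}$ is singular, with the desired branch $\mathcal S$ and the spurious unbounded branch $\mathcal U$ crossing at $z=1$. The substantive content of the proof is to guarantee that $\Gamma_{0,n}$ follows $\mathcal S$ and not (a portion of) $\mathcal U$, which is exactly what the confinement estimate and the observation that $G_n$ lies on the $0$-side of the vanishing chord $[\beta_{1,n},\beta_{2,n}]$ are designed to settle; making the upper bound $\Gamma_{0,n}\subset\{|z|\le R\}$ fully rigorous — using the foliation of a punctured neighbourhood of $0$ by closed trajectories and the sign of $h_n$ along $[0,\infty)$ — is the main obstacle.
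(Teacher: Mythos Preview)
Your approach is essentially the same as the paper's: pass to the limit $\phi_n(z)\to\tfrac12(z-1-\log z)$ uniformly away from $0$ and $\infty$, and identify the zero set of the real part of the limit with the Szeg\H{o} curve. The paper's proof is much briefer and simply asserts that, since each $\Gamma_{0,n}$ is by definition a simple closed curve encircling $0$, the limit curve inherits these properties and hence must be $\mathcal S$; your treatment of the unbounded branch $\mathcal U$, the confinement estimates, and the saddle at $z=1$ fills in details the paper leaves implicit.
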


\begin{proof}

From (\ref{beta1-def}) and (\ref{beta2-def}), we know that $\beta_{1,n}$ and $\beta_{2,n}$ both tend to 1 as $n \to \infty$.
Then, by (\ref{Rn-def}), $ R_n(z) \to  z - 1 $. As a consequence,
\begin{equation} \label{phi-1inf}
\phi_{n}(z) \to \frac{1}{2} \int_1^z \frac{s-1}{s} \, d s = \frac{1}{2} (z - 1 -\log{z}) \quad \textrm{as $ n \to \infty$},
\end{equation}
uniformly for $z$ bounded away from 0 and $\infty$. Therefore, from the definition of $\Gamma_{0,n}$ in Definition \ref{gamma0},
we know that $\Gamma_{0,n}$ tends to a simple closed curve $\Gamma_{0, \infty}$ encircling 0 once, and
\begin{equation}
\re(z - 1 -\log{z}) = 0 \qquad \textrm{for} \quad z \in \Gamma_{0,\infty}.
\end{equation}
From (\ref{szego-curve}), we see that $\Gamma_{0,\infty}$ is the Szeg\H{o} curve $\mathcal{S}$.
\end{proof}

In \cite{km2} a probability measure was defined for each $A < 1$ supported on a set $\Gamma_0 \cup [\beta_1, \beta_2]$, where
$\Gamma_0$ is a closed contour and $[\beta_1, \beta_2]$ an interval on the positive real line. In the present situation we have
that $A = A_n = 1 - \frac{\nu}{n}$ depends on $n$, but for each finite $n$ we still start from the corresponding probability
measure $\mu = \mu_{n}$ which now depends on $n$. The measure is given by
\begin{equation} \label{dmu-n}
    d \mu_{n}(y) = \frac{1}{2 \pi i} \frac{R_n(y)}{y} \chi_{\Gamma_{0,n}}(y) dy +
    \frac{1}{2 \pi i} \frac{R_{n,+}(y)}{y} \chi_{[\beta_{1,n},\beta_{2,n}]}(y) dy.
\end{equation}
Using (\ref{phi-1n}) and (\ref{phi-gamma0}), one can indeed check that (\ref{dmu-n}) defines a positive measure on $\Gamma_{0,n}
\cup [\beta_{1,n}, \beta_{2,n}]$.

\begin{figure}[h]
\centering
\includegraphics[width=200pt]{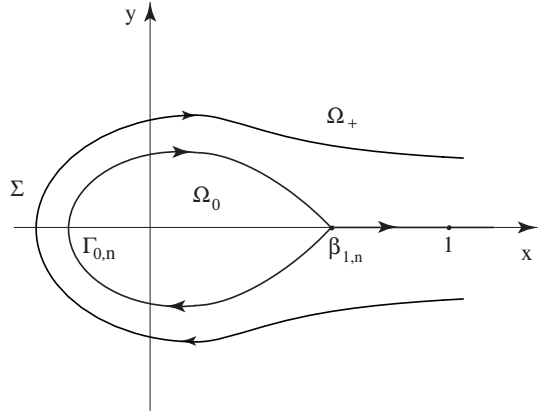}
\caption{Contours $\Sigma$ and $\Sigma^U = \Gamma_{0,n} \cup [\beta_{1,n}, \infty)$} \label{contour-case01}
\end{figure}

\subsection{First transformation $Y \mapsto U$}

Expecting the zero distribution as shown in Figure \ref{lagzero} for $0<A<1$, we start by modifying the contour in the RH
problem. Let
\begin{equation}
    \Sigma^U := \Gamma_{0,n} \cup [\beta_{1,n}, \infty).
\end{equation}
We assume (without loss of generality) that the contour $\Sigma$ in the RH problem for $Y$ was chosen so that $\Gamma_{0,n}$ is
contained in $\Omega_-$, see Figures \ref{Sigmacontour} and \ref{contour-case01}.

\begin{figure}[h]
\centering
\includegraphics[width=200pt]{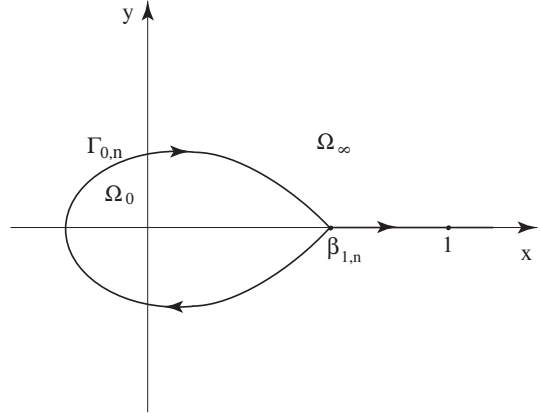}
\caption{The contour $\Sigma^U = \Gamma_{0,n} \cup [\beta_{1,n},\infty)$
    for the RH problem for $U$} \label{contour-case1}
\end{figure}

Introduce $U$ as
\renewcommand{\arraystretch}{1}
\begin{equation} \label{yy*}
    U(z) = \begin{cases} Y(z) & \textrm{for } z \in \Omega_+ \cup \Omega_0, \\
    Y(z) \left( \begin{matrix} 1 & w(z) \\ 0 & 1 \end{matrix} \right) & \textrm{for } z \in \Omega_- \setminus \Omega_0;
\end{cases}
\end{equation}
see Figure~\ref{contour-case01}. Then we obtain the following RH problem for $U$:

\begin{enumerate}
\item[(a)] $U(z)$ is analytic for $z \in \mathbb{C} \setminus
\Sigma^U$, see Figure~\ref{contour-case1}; \item[(b)] $U(z)$ possesses continuous boundary values on $\Sigma^U$ satisfying
\begin{align}
    U_+(z) & = U_-(z) \begin{pmatrix} 1 & z^{-n+ \nu} e^{-N z} (z-1)^{2b} \\ 0 & 1 \end{pmatrix}  \qquad
    \textrm{for } z \in \Gamma_{0,n}, \label{U-jump1} \\
    U_+(x) & = U_-(x) \begin{pmatrix} 1 & w_+(x) - w_-(x) \\ 0 & 1 \end{pmatrix}
     \qquad \textrm{for } x \in (\beta_{1,n} , 1) \cup (1, \infty); \label{U-jump2}
\end{align}
\item[(c)] $U(z)$ has the following behavior as $z \to \infty$
\begin{equation*}
    U(z)=\left( I + O\left(\frac{1}{z}\right) \right) \left( {\begin{array}{cc} z^n & 0 \\
    0 & z^{-n} \end{array}} \right);
\end{equation*}
\item[(d)] $U(z)$ has the following behavior as $z \to 1$:
\begin{equation} \label{u-0-behave}
    U(z) \left( \begin{matrix} 1 & c_{u, \pm} (z-1)^{2b} \\ 0 & 1 \end{matrix} \right) = O(1) \qquad \textrm{for } z \in
    \mathbb{C}^{\pm},
\end{equation}
where
\begin{equation} \label{cu-constant}
c_{u,+} = - e^{-N} \quad \hbox{and} \quad c_{u,-} = - e^{2 \nu \pi i -N}.
\end{equation}
\end{enumerate}
In (\ref{U-jump1}) and (\ref{u-0-behave}), the factor $(z-1)^{2b}$ is defined with a branch cut along $[1, \infty)$ (as before).

Because of the different choices of branches in the definition of $w(z)$ we find that
\begin{align} \label{wpm-def}
     w_+(x) - w_-(x) & = |x|^{-n + \nu} e^{-Nx} |x-1|^{2b} (1- e^{2\nu \pi i}), \qquad x \in (\beta_{1,n}, 1), \\
     w_+(x) - w_-(x) &= |x|^{-n + \nu} e^{-Nx} |x-1|^{2b} (1- e^{2(\nu + 2b) \pi i}), \qquad x \in (1, \infty). \label{wpm-def2}
\end{align}
Therefore, (\ref{U-jump2}) takes on different forms on the two intervals $(\beta_{1,n},1)$ and $(1,\infty)$.

Condition (d) is crucial in the above RH problem for $U$. With conditions (a)--(c) only, we do not have a unique solution for
our RH problem for $U$.  To obtain a unique solution, we have to add the extra condition (d) that controls the local behavior as
$z \to 1$. This local behavior (\ref{u-0-behave}) with the precise constants (\ref{cu-constant}) will also be important in the
construction of a local parametrix around $z=1$ that will ultimately be our goal.


\subsection{The $g$ and $\phi$ functions}

The measure $\mu_{n}$ in (\ref{dmu-n}) gives rise to the so-called $g$-function, which depends on  $n$,
\begin{equation} \label{gn}
    g_{n}(z) = \int\limits_{\Gamma_{0,n} \cup [\beta_{1,n},\beta_{2,n}]} \log(z-s) d \mu_{n}(s), \qquad
    z \in \mathbb{C} \setminus \Sigma^U,
\end{equation}
where the logarithm $\log(z-s)$ is defined with a branch cut along $\Sigma^U$, going to the right from $s$ to $\infty$.
Recalling the definition of $\phi_{n}(z)$ in (\ref{phi-1n}), we similarly define
\begin{equation}
    \tilde\phi_{n}(z) = \frac{1}{2} \int_{\beta_{2,n}}^z \frac{R_n(s)}{s} ds, \qquad
    z \in \mathbb{C} \setminus (-\infty, \beta_{2,n}].
\end{equation}
Note that
\begin{equation} \label{phi-phi}
\phi_{n}(z) = \tilde\phi_{n}(z) \pm \frac{\nu}{n} \pi i \qquad \textrm{for } z \in \mathbb{C}^{\pm};
\end{equation}
see \cite[Lemma 3.4.2]{dev}. We also introduce
\begin{equation} \label{gt}
    g_{t,n}(z) := g_n(z) + (t-1) g_n^0(z)
\end{equation}
and
\begin{equation}  \label{phi-tz}
\phi_{t,n}(z) := \phi_n(z) + (t-1) \phi_n^0(z), \qquad \tilde\phi_{t,n}(z) := \tilde\phi_n(z) + (t-1) \phi_n^0(z),
\end{equation}
where
\begin{equation}
    g_n^0(z) =  \begin{cases} \frac{1}{2} (z - R_n(z)), & z \in \Omega_\infty,
    \\ \frac{1}{2} (z + R_n(z)), & z \in \Omega_0, \end{cases}
\end{equation}
and
\begin{equation} \label{phi0}
    \phi_n^0(z) = \frac{R_n(z)}{2}, \qquad z \in \mathbb{C} \setminus [\beta_{1,n}, \beta_{2,n}].
\end{equation}
Note that $g_n^0(z) = O(1/z)$ as $z \to \infty$. It is easily seen that the functions $ g_n^0(z)$ and $\phi_n^0(z)$ satisfy the
following properties.

\begin{prop} \label{g-prop-2}
\begin{enumerate}
\item[\rm (a)] We have
\begin{equation} \label{g&phi-2}
    g_n^0(z) = \frac{1}{2} \, z +
    {\begin{cases} - \phi_n^0(z), \quad z \in \Omega_\infty \cap
    \mathbb{C}^{\pm}, \\ + \phi_n^0(z), \quad z \in \Omega_0 \cap \mathbb{C}^{\pm}.
    \end{cases}}
\end{equation}

\item[\rm (b)] For $z \in \Sigma^U$, we have
\begin{equation}
    g^0_{n,+}(z)-g^0_{n,-}(z) = \begin{cases} - 2\phi^0_{n}(z), & z \in
    \Gamma_{0,n} \cap \mathbb{C}_\pm,\\
    -2\phi^0_{n,+}(z) = 2 \phi^0_{n,-}(z),& z\in(\beta_{1,n},\beta_{2,n}].
\end{cases}
\end{equation}
\item[\rm (c)] We also have
\begin{equation}
    g_{n,+}^0(z) + g_{n,-}^0(z) = \begin{cases}  z, & z \in \Gamma_{0,n} \cup [\beta_{1,n},\beta_{2,n}] \text{,}\\
    z  -2\phi^0_{n}(z)\text{,} & z \in [\beta_{2,n},\infty)\text{.}
\end{cases}
\end{equation}
\end{enumerate}
\end{prop}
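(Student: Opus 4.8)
The plan is to verify each of the three identities directly from the definitions of $g_n^0$ and $\phi_n^0$ in terms of the branch $R_n$, using only the elementary properties of $R_n(z) = \sqrt{(z-\beta_{1,n})(z-\beta_{2,n})}$ normalized so that $R_n(z)\sim z$ at infinity. First I would record the two basic facts about $R_n$: it is analytic on $\mathbb{C}\setminus[\beta_{1,n},\beta_{2,n}]$, it changes sign across the cut (so $R_{n,+}(x) = -R_{n,-}(x)$ for $x\in(\beta_{1,n},\beta_{2,n})$), and $R_n(z) = z - \tfrac12(\beta_{1,n}+\beta_{2,n}) + O(1/z)$ as $z\to\infty$; here $\beta_{1,n}+\beta_{2,n} = 2(2-A_n)$, but the precise value will not matter except through the stated $O(1/z)$ behavior of $g_n^0$.

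For part (a), the claim $g_n^0(z) = \tfrac12 z \mp \phi_n^0(z)$ (sign according to $\Omega_\infty$ versus $\Omega_0$) is immediate: in $\Omega_\infty$ we have $g_n^0(z) = \tfrac12(z - R_n(z)) = \tfrac12 z - \phi_n^0(z)$ by \eqref{phi0}, and in $\Omega_0$ we have $g_n^0(z) = \tfrac12(z + R_n(z)) = \tfrac12 z + \phi_n^0(z)$; the reference to $\mathbb{C}^{\pm}$ is only bookkeeping for which sheet of $R_n$ one is on. For part (b), on the arc $\Gamma_{0,n}\cap\mathbb{C}_\pm$ (which lies inside $\Omega_0$ on one side and $\Omega_\infty$ on the other — recall $\Gamma_{0,n}$ separates $\Omega_0$ from $\Omega_\infty$ along $\Sigma^U$), the jump $g^0_{n,+}-g^0_{n,-}$ is computed by taking the two expressions $\tfrac12(z+R_n(z))$ and $\tfrac12(z-R_n(z))$ from the two sides; their difference is $R_n(z) = 2\phi_n^0(z)$ up to the orientation-induced sign, giving $-2\phi_n^0(z)$. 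On the interval $(\beta_{1,n},\beta_{2,n}]$ the jump comes entirely from the sign change of $R_n$ across its own branch cut: $g^0_{n,+}-g^0_{n,-} = \tfrac12(R_{n,+}(x) - (-R_{n,-}(x)))$ wait — more carefully, both $g^0_{n,\pm}$ use the $\Omega_\infty$-branch formula $\tfrac12(z - R_n(z))$ there, so the difference is $-\tfrac12(R_{n,+} - R_{n,-}) = -R_{n,+}(x) = -2\phi^0_{n,+}(x)$, and since $R_{n,+} = -R_{n,-}$ this also equals $+2\phi^0_{n,-}(x)$, as asserted. For part (c), on $\Gamma_{0,n}\cup[\beta_{1,n},\beta_{2,n}]$ one adds the $\Omega_0$ and $\Omega_\infty$ expressions (resp. the two boundary values on the interval): $\tfrac12(z+R_n) + \tfrac12(z-R_n) = z$; and on $[\beta_{2,n},\infty)$, which lies in $\Omega_\infty$, both boundary values use $\tfrac12(z-R_n(z))$ but $R_n$ picks up no sign change there (the cut of $R_n$ ends at $\beta_{2,n}$), so actually the sum of boundary values is $z - R_{n,+}(x)$... the subtlety is the orientation of $\Sigma^U$ on $(\beta_{2,n},\infty)$ relative to $\Omega_0,\Omega_\infty$, which forces one boundary value to be read off the $\Omega_0$ formula; one then gets $z - 2\phi_n^0(z)$ after using \eqref{phi0}. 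I would spell this out case by case, keeping track of orientation exactly as in Figure~\ref{contour-case1}.

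The main obstacle is purely one of bookkeeping rather than of mathematical content: one must be scrupulously careful about (i) which of the two formulas for $g_n^0$ (the $\Omega_0$ or the $\Omega_\infty$ branch) applies to each boundary value $g^0_{n,\pm}$ along each piece of $\Sigma^U$, and (ii) the sign conventions induced by the orientation of $\Sigma^U$ (the $+$-side being on the left) together with the branch cut conventions for $R_n$ and for the logarithm in $g_n$. Once the geometry of Figure~\ref{contour-case1} is fixed, every identity reduces to one of the two trivial algebraic facts $\tfrac12(z\pm R_n) + \tfrac12(z\mp R_n) = z$ or $\tfrac12(z+R_n)-\tfrac12(z-R_n) = R_n = 2\phi_n^0$, so the proof is short. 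For this reason the authors presumably state it as "easily seen"; in a written-out proof I would simply exhibit the branch assignments on a diagram and let the reader check the four or five resulting one-line computations.
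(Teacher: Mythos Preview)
Your approach is correct and is exactly what the paper intends: the authors write only ``It is easily seen that the functions $g_n^0(z)$ and $\phi_n^0(z)$ satisfy the following properties'' and give no further argument, so the direct verification from the piecewise definition of $g_n^0$ and $\phi_n^0 = R_n/2$ is precisely the expected proof.

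One small correction to your bookkeeping in part~(c): on $[\beta_{2,n},\infty)$ there is no subtlety involving the $\Omega_0$ formula. Both sides of this half-line lie in $\Omega_\infty$, and $R_n$ is analytic there (its cut ends at $\beta_{2,n}$), so in fact $g_n^0$ has no jump on $[\beta_{2,n},\infty)$ and $g^0_{n,+}+g^0_{n,-} = 2\cdot\tfrac12(z-R_n(z)) = z - 2\phi_n^0(z)$ directly. Your first computation was already correct before you second-guessed it.
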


Then from the above proposition and Proposition 3.2 in \cite{km2}, we get the following properties for $g_{t,n}$, $\phi_{t,n}$,
and $\tilde{\phi}_{t,n}$.
\begin{prop} \label{g-prop}
\begin{enumerate}
\item[\rm (a)] There exists a constant $l_{t,n}$ such that
\begin{equation} \label{g&phi}
    g_{t,n}(z) = \frac{1}{2} \left( A_n\log z + t z + l_{t,n} \right) +
    {\begin{cases} \mp \frac{1}{2} A_n \pi i - \phi_{t,n}(z), \quad z \in \Omega_\infty \cap
    \mathbb{C}^{\pm}, \\ \pm \frac{1}{2} A_n \pi i + \phi_{t,n}(z), \quad z \in \Omega_0 \cap \mathbb{C}^{\pm}.
    \end{cases}}
\end{equation}
Here $\log z$ is defined with a cut along $[0, \infty)$. The constant $l_{t,n}$ is explicitly given by
\[ l_{t,n} = 2g_{t,n}(x_n) - (A_n \log x_n + x_n), \]
where $x_n$ is the intersection of $\Gamma_{0,n}$ with the negative real axis.

\item[\rm (b)] For $z \in \Sigma^U$, we have
\begin{equation}
    g_{t,n,+}(z)-g_{t,n,-}(z) = \begin{cases} - \phi_{t,n}(z) - \tilde\phi_{t,n}(z) \mp \pi i, & z \in
    \Gamma_{0,n} \cap \mathbb{C}_\pm,\\
    -2\phi_{t,n,+}(z)-2 A_n \pi i = 2 \phi_{t,n,-}(z)-2 A_n \pi i\text{,}& z\in(\beta_{1,n},\beta_{2,n}]\text{,}\\
    -2 \pi i \text{,}& z \in [\beta_{2,n},\infty).
\end{cases}
\end{equation}
\item[\rm (c)] We also have
\begin{equation}
    g_{t,n,+}(z) + g_{t,n,-}(z) = \begin{cases} A_n \log{z}+ t z + l_{t,n} , & z \in \Gamma_{0,n}\text{,}\\
    A_n \log{z} + t z + A_n \pi i + l_{t,n} \text{,}& z \in [\beta_{1,n},\beta_{2,n}]\text{,}\\
    A_n \log{z} + t z + A_n \pi i + l_{t,n} -2\tilde\phi_{t,n}(z)\text{,} & z \in [\beta_{2,n},\infty)\text{.}
\end{cases}
\end{equation}
\end{enumerate}
\end{prop}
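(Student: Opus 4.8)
The plan is to derive each of the three items by combining the corresponding statement in Proposition \ref{g-prop-2} for the auxiliary functions $g_n^0$, $\phi_n^0$ with the known properties of the ``base'' $g$-function $g_n$ and phase function $\phi_n$ from \cite[Proposition~3.2]{km2}, and then pushing everything through the linear relations \eqref{gt} and \eqref{phi-tz}. First I would recall from \cite{km2} (and \cite{dev}) the three structural identities for $g_n$: (i) $g_n(z) = \tfrac12(A_n\log z + z + l_n) \mp \tfrac12 A_n\pi i - \phi_n(z)$ on $\Omega_\infty\cap\mathbb{C}^\pm$ and $+\phi_n(z)$ on $\Omega_0\cap\mathbb{C}^\pm$, with a constant $l_n$; (ii) the jump identities for $g_{n,+}-g_{n,-}$ across $\Gamma_{0,n}$, across $[\beta_{1,n},\beta_{2,n}]$, and across $[\beta_{2,n},\infty)$ expressed via $\phi_n$, $\tilde\phi_n$; and (iii) the sum identities $g_{n,+}+g_{n,-}$ on those same arcs. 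These are exactly the $t=1$ specializations of what we want to prove, just without the $(t-1)$-terms.

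For item (a): on $\Omega_\infty$ I would write $g_{t,n}=g_n+(t-1)g_n^0$, substitute identity (i) for $g_n$ and Proposition~\ref{g-prop-2}(a) for $g_n^0$, namely $g_n^0 = \tfrac12 z \mp \phi_n^0$ on $\Omega_\infty\cap\mathbb{C}^\pm$. The $\tfrac12 z$ contributes $(t-1)\cdot\tfrac12 z$, which combined with the $\tfrac12 z$ already present in $g_n$ assembles into $\tfrac12 t z$; the $A_n\log z$ and the $\mp\tfrac12 A_n\pi i$ terms are untouched by the $g_n^0$ piece; and the phase terms combine as $-\phi_n - (t-1)\phi_n^0 = -\phi_{t,n}$ by definition \eqref{phi-tz}. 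Setting $l_{t,n}:=l_n + (t-1)(\text{appropriate constant from }g_n^0)$ — more cleanly, defining it directly by $l_{t,n}=2g_{t,n}(x_n)-(A_n\log x_n + t x_n)$ and checking consistency at the point $x_n\in\Gamma_{0,n}$ — gives the stated formula. The $\Omega_0$ case is identical with signs flipped on the $\phi$-terms, using the $+\phi_n^0$ branch of Proposition~\ref{g-prop-2}(a). The explicit formula for $l_{t,n}$ then follows by evaluating at $x_n$ exactly as in the $t=1$ case.

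For items (b) and (c): these are purely additive and require no constant-tracking. On each arc of $\Sigma^U$ I would add $(t-1)$ times the corresponding line of Proposition~\ref{g-prop-2}(b) (resp.\ (c)) to the known $t=1$ jump (resp.\ sum) identity for $g_n$. For instance on $\Gamma_{0,n}\cap\mathbb{C}_\pm$, the $g_n$ jump is $-\phi_n-\tilde\phi_n\mp\pi i$ and the $g_n^0$ jump is $-2\phi_n^0$; since \eqref{phi-tz} gives $\phi_{t,n}+\tilde\phi_{t,n}=\phi_n+\tilde\phi_n+2(t-1)\phi_n^0$, the sum is exactly $-\phi_{t,n}-\tilde\phi_{t,n}\mp\pi i$. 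On $(\beta_{1,n},\beta_{2,n}]$ one uses $\phi^0_{n,+}=-\phi^0_{n,-}$ so that $\phi_{t,n,\pm}=\phi_{n,\pm}+(t-1)\phi^0_{n,\pm}$ still satisfies $\phi_{t,n,+}=-\phi_{t,n,-}$, and the $-2A_n\pi i$ term is inherited unchanged; on $[\beta_{2,n},\infty)$ the $g_n^0$ jump vanishes (since $g_n^0$ has no cut there beyond what is already accounted for — more precisely $g^0_{n,+}+g^0_{n,-}=z-2\phi^0_n$ and the jump contribution cancels), leaving $-2\pi i$. The sum identities in (c) follow the same way, with the $A_n\pi i$ appearing on $[\beta_{1,n},\beta_{2,n}]$ from the $g_n$ part and the $-2\tilde\phi_{t,n}$ on $[\beta_{2,n},\infty)$ assembled from $-2\tilde\phi_n-2(t-1)\phi^0_n$.

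The only place where any care is needed — and what I'd flag as the main (mild) obstacle — is the bookkeeping of the additive constant $l_{t,n}$ in item (a) and the verification that the branch choices for $\log z$, $R_n$, $\phi_n$, $\tilde\phi_n$ on the various sheets $\Omega_0$, $\Omega_\infty$, $\mathbb{C}^\pm$ are mutually consistent across the relation \eqref{phi-phi} ($\phi_n=\tilde\phi_n\pm\tfrac{\nu}{n}\pi i$ on $\mathbb{C}^\pm$), so that the $\mp\tfrac12 A_n\pi i$ and $\pm\tfrac12 A_n\pi i$ terms come out with the right signs. This is essentially a diagram-chase on Figure~\ref{contour-case01} and is handled verbatim as in \cite{km2}; since $A_n = 1-\tfrac{\nu}{n}$ and everything is a finite-$n$ identity, no asymptotics enter. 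Everything else is routine substitution.
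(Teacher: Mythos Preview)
Your proposal is correct and follows exactly the route the paper indicates: the paper gives no detailed proof but simply states that the proposition follows ``from the above proposition and Proposition 3.2 in \cite{km2}'', which is precisely your plan of adding $(t-1)$ times the $g_n^0$/$\phi_n^0$ identities of Proposition~\ref{g-prop-2} to the $t=1$ identities for $g_n$/$\phi_n$ from \cite{km2} and collecting terms via \eqref{gt}--\eqref{phi-tz}. Your filled-in details are accurate; the one small remark is that since $g_n^0$ contributes no constant term in \eqref{g&phi-2}, in fact $l_{t,n}=l_n$ (and the explicit formula in the statement should carry a $t x_n$ rather than $x_n$, consistent with your version).
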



\subsection{Second transformation $U \mapsto T$}

To normalize the RH problem at infinity, introduce the second transformation $U \mapsto T$ as
\begin{multline} \label{t-y}
    T(z) =  \Big( (-1)^n (e^{- i \pi \nu} - e^{i \pi \nu})  \Big)^{-\frac{1}{2} \sigma_3}
        e^{-\frac{1}{2} n l_{t,n} \sigma_3} U(z) \\
        \times e^{ - n g_{t,n}(z) \sigma_3} e^{\frac{1}{2} n l_{t,n} \sigma_3} \Big( (-1)^n (e^{- i \pi \nu}
     - e^{i \pi \nu}) \Big)^{\frac{1}{2} \sigma_3}.
\end{multline}
Note that it is important here that $\nu$ is not an integer, since this assumption guarantees that $ e^{-i \pi \nu} - e^{i\pi
\nu} \neq 0$ and therefore $T$ is well-defined.

With the RH problem for $U$ and Propositions \ref{g-prop-2} and \ref{g-prop}, direct calculation gives us a RH problem for $T$
as follows. As before $(z-1)^{2b}$ is defined with a cut along $[1, \infty)$.
\begin{enumerate}
\item[(a)] $T(z)$ is analytic for $z \in \mathbb{C} \setminus \Sigma^U$;

\item[(b)]
\begin{align}
    T_+(z) = \ & T_-(z)
    \begin{pmatrix} (-1)^n e^{n (\phi_{t,n}(z) + \tilde\phi_{t,n}(z))} & (-1)^n (e^{-\nu \pi i} - e^{\nu \pi i})^{-1}  (z-1)^{2b} \\
     0 & (-1)^n e^{-n (\phi_{t,n}(z) + \tilde\phi_{t,n}(z))}
    \end{pmatrix},    z \in \Gamma_{0,n}; \label{t-jump0} \\
    T_+(z) = \ & T_-(z)  \begin{pmatrix} e^{2n \tilde\phi_{t,n,+}(z)} &  (z-1)^{2b} \\ 0 & e^{2n \tilde\phi_{t,n,-}(z)}
    \end{pmatrix}, \hspace{75pt} z \in (\beta_{1,n}, 1);  \label{t-jump1} \\
    T_+(z) = \ & T_-(z) \begin{pmatrix} e^{2n \tilde\phi_{t,n,+}(z)} &  |z-1|^{2b}
    \frac{\sin(\nu + 2b) \pi}{\sin{\nu \pi}} e^{2b \pi i} \\
    0 & e^{2n \tilde\phi_{t,n,-}(z)}
    \end{pmatrix},  \quad \ z \in (1, \beta_{2,n}); \label{t-jump2} \\
    T_+(z) = \ & T_-(z)  \begin{pmatrix} 1 &  e^{-2n \tilde\phi_{t,n}(z)} |z-1|^{2b}
    \frac{\sin(\nu + 2b) \pi}{\sin{\nu \pi}} e^{2b \pi i} \\
    0 & 1 \end{pmatrix},  \quad z \in (\beta_{2,n}, \infty); \label{t-jump3}
\end{align}

\item[(c)] $T(z)$ has the following behavior as $z \to \infty$
\begin{equation*}
    T(z) = I + O \left( \frac{1}{z} \right);
\end{equation*}

\item[(d)] $T(z)$ has the following behavior as $z \to 1$:
\begin{equation}
T(z) \left( \begin{matrix} 1 & c_{t, \pm}(z-1)^{2b} \\ 0 & 1 \end{matrix} \right) = O (1) \qquad \textrm{for } z \in
\mathbb{C}^{\pm}
\end{equation}
with
\begin{equation} \label{ct-constant}
c_{t, \pm} = (-1)^{n} (e^{- \nu \pi i} - e^{\nu \pi i})^{-1}   \, e^{2n g_{t,n,\pm}(1) - n l_{t,n} } c_{u, \pm},
\end{equation}
where $c_{u,\pm}$ is given in (\ref{cu-constant}).
\end{enumerate}

To derive the jump matrices for $T$ in (\ref{t-jump1})--(\ref{t-jump3}), one needs to make use of (\ref{U-jump2}),
(\ref{wpm-def}), (\ref{wpm-def2}) and Proposition \ref{g-prop}. The conjugation with the constant matrix $\Big( (-1)^n (e^{- i
\pi \nu} - e^{i \pi \nu}) \Big)^{-\frac{1}{2} \sigma_3}$ in (\ref{t-y}) may look a bit awkward since it makes the jump matrix in
(\ref{t-jump0}) more complicated. We have introduced it in order to simplify the jump matrices (\ref{t-jump1})--(\ref{t-jump3}).
We also want to point out that the entry $\frac{\sin(\nu + 2b) \pi}{\sin{\nu \pi}}$ in (\ref{t-jump2}) and (\ref{t-jump3}) will
come out in the Stokes multipliers $s_j$'s in the RH problem for PIV later on.

By using the relations between the  $g$-functions and $\phi$-functions in (\ref{g&phi}) and (\ref{g&phi-2}), the constant $c_{t,
\pm}$ in (\ref{ct-constant}) can be rewritten as
\begin{equation} \label{ct-constant2}
    c_{t, \pm} = - (e^{- \nu \pi i} - e^{\nu \pi i})^{-1} \,
    e^{- 2n\phi_{t,n,\pm}(1) \pm \nu \pi i}.
\end{equation}

\subsection{Third transformation $T \mapsto S$}

From (\ref{phi-gamma0}), (\ref{phi-phi}) and (\ref{phi-tz}), we know that, when $|t-1|$ is small, the real part of
$\phi_{t,n}(z) + \tilde\phi_{t,n}(z)$ is close to 0 for $z \in \Gamma_{0,n}$. This means the jump matrix for $T$ on
$\Gamma_{0,n}$ in (\ref{t-jump0}) has oscillatory diagonal entries. To remove this oscillating jump, we introduce the third
transformation and open the lens as follows. In a fixed neighborhood of $z = 1$ let the curve $\Gamma_1$ be defined as $\im z =
\arg z$, then we continue $\Gamma_1$ such that it becomes a closed contour with the Szeg\H{o} curve $\mathcal{S}$ in its
interior. Note that $\Gamma_1$ is independent of $n$ and $t$. Then $\Gamma_1$ and $\Gamma_{0,n}$ divide the complex plane into
three domains $\Omega_0$, $\Omega_1$ and $\Omega_{\infty}^S$; see Figure~\ref{contourt1}.
\begin{figure}[h]
\centering
\includegraphics[width=220pt]{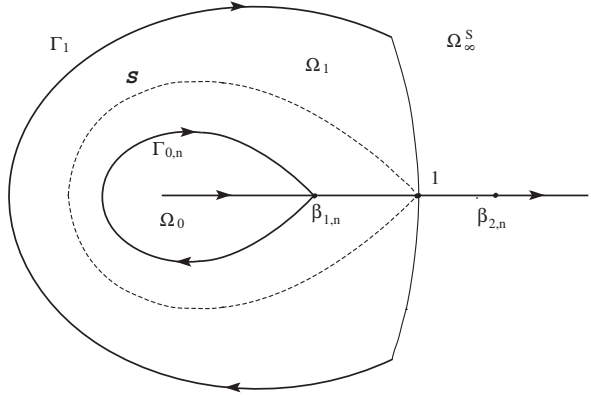}
\caption{Szeg\H{o} curve and the contour for the RH problem for $S$} \label{contourt1}
\end{figure}
Define
\begin{eqnarray}
S(z) & = & T(z) \begin{pmatrix} 0 & (-1)^n \frac{(z-1)^{2b}}{e^{-\nu \pi i} - e^{\nu \pi i}}  \\
    (-1)^{n+1} \frac{e^{-\nu \pi i} - e^{\nu \pi i}}{(z-1)^{2b}} & (-1)^n e^{-n(\phi_{t,n}(z) + \tilde\phi_{t,n}(z))}
    \end{pmatrix}, \ z \in \Omega_0; \label{t-s1}\\
S(z) & = & T(z) \begin{pmatrix} 1 & 0 \\ - \frac{e^{-\nu \pi i} - e^{\nu \pi i}}{(z-1)^{2b}} \, e^{n(\phi_{t,n}(z) +
\tilde\phi_{t,n}(z))} & 1 \end{pmatrix}, \hspace{65pt}  z \in \Omega_1; \label{t-s2}\\
S(z) & = & T(z), \hspace{240pt} z \in \Omega_\infty^S. \label{t-s3}
\end{eqnarray}
Direct calculation shows that $S_+(z) = S_-(z)$ for $z \in \Gamma_{0,n}$. Therefore, $S(z)$ has an analytic continuation across
$\Gamma_{0,n}$ and we obtain a RH problem for $S$ on the contour $\Sigma^S = \Gamma_1 \cup [0, \infty)$ shown in
Figure~\ref{contourt2}. We use $\Omega_0^S$ to denote the bounded domain enclosed by $\Gamma_1$.

\begin{figure}[h]
\centering
\includegraphics[width=200pt]{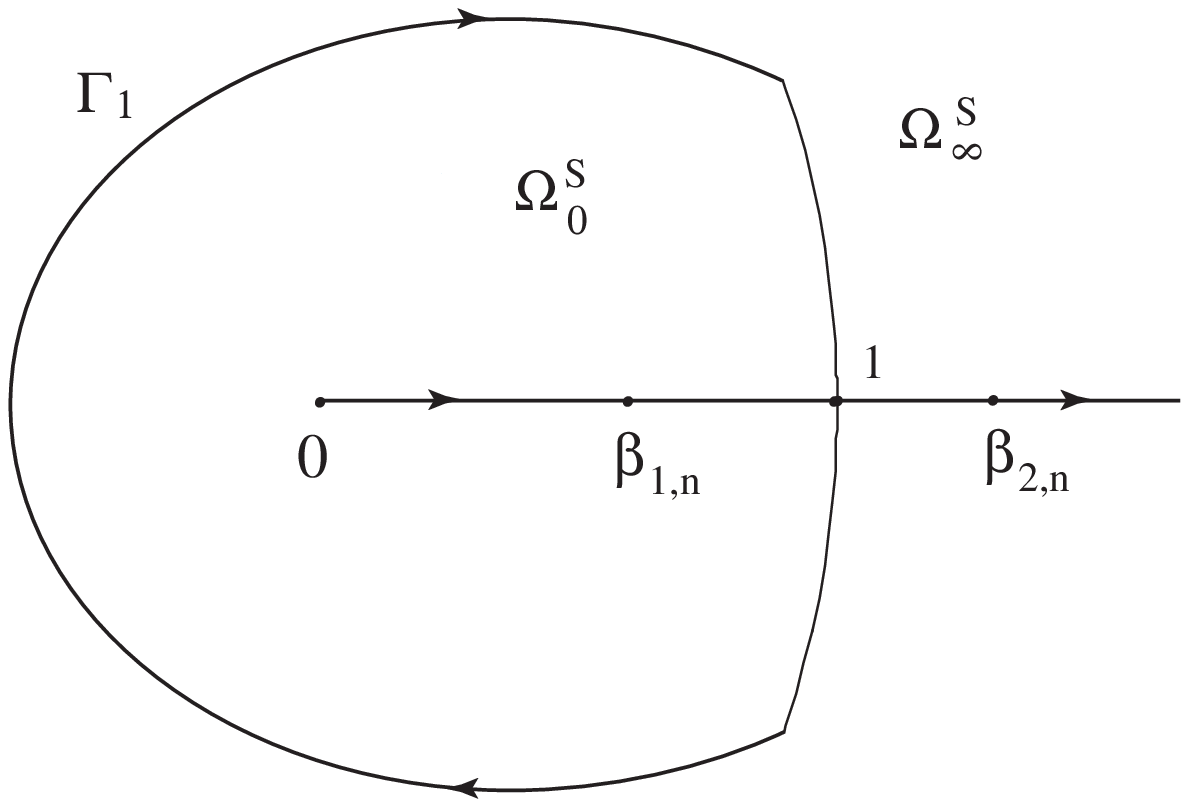}
\caption{The contour $\Sigma^S$ for the RH problem for $S$} \label{contourt2}
\end{figure}

\begin{enumerate}

\item[(a)] $S$ is analytic for $z \in \mathbb{C} \setminus
\Sigma^S$, see Figure~\ref{contourt2};

\item[(b)]
\begin{align} \label{s-jump1}
S_+(z) = \ & S_-(z) \left( \begin{matrix} 1 & 0 \\ \frac{e^{-\nu \pi i} - e^{\nu \pi i}}{(z-1)^{2b}}  e^{n(\phi_{t,n}(z) +
\tilde\phi_{t,n}(z))} & 1 \end{matrix} \right), &  z \in \Gamma_1; \\
S_+(z) = \ & S_-(z) \left( \begin{matrix} 1 &  (z-1)^{2b} e^{-2n \phi_{t,n}(z)} \\ 0 & 1 \end{matrix} \right), &  z \in
(0, \beta_{1,n}); \\
S_+(z) = \ & S_-(z) \left( \begin{matrix} e^{2n \phi_{t,n,+}(z)} &  (z-1)^{2b} \\ 0 & e^{2n \phi_{t,n,-}(z)} \end{matrix}
\right), &
z \in (\beta_{1,n},1); \\
S_+(z) = \ & S_-(z) \left( \begin{matrix} e^{2n \tilde\phi_{t,n,+}(z)} &  |z-1|^{2b}
\frac{\sin(\nu + 2b) \pi }{\sin{\nu \pi}} e^{2b \pi i} \\
0 & e^{2n \tilde\phi_{t,n,-}(z)}
\end{matrix} \right), &  z \in (1, \beta_{2,n}); \\ \label{s-jump5}
S_+(z) = \ & S_-(z) \left( \begin{matrix} 1 &  |z-1|^{2b}  \frac{\sin(\nu + 2b) \pi }{\sin{\nu \pi}} e^{2b \pi i}
e^{-2n \tilde\phi_{t,n}(z)} \\
0 & 1 \end{matrix} \right), & \hspace{-10pt}  z \in (\beta_{2,n}, \infty);
\end{align}
\item[(c)] $S(z)$ has the following behavior as $z \to \infty$
\begin{equation*}
S(z) = I + O\left(\frac{1}{z}\right);
\end{equation*}

\item[(d)] $S(z)$ has the following behavior as $z \to 1$
\begin{eqnarray}
    S(z) \begin{pmatrix} 1 & c_{t,\pm}(z-1)^{2b} \\ -c_{t,\pm}^{-1}(z-1)^{-2b} & 0 \end{pmatrix}
    & = & O (1), \ z \in \Omega_0^S \cap \mathbb{C}^{\pm}, \label{s-1-behave} \\
    S(z) \begin{pmatrix} 1 & c_{t,\pm}(z-1)^{2b} \\ 0 & 1 \end{pmatrix}& = &
    O (1), \ z \in \Omega_\infty^S \cap \mathbb{C}^{\pm}, \label{s-1-behave2}
\end{eqnarray}
where $c_{t,\pm}$ is given in (\ref{ct-constant2}).
\end{enumerate}

The jump matrices in (\ref{s-jump1})--(\ref{s-jump5}) look quite complicated. However, most of them tend to the identity matrix
as $n \to \infty$ at an exponential rate. To clarify this fact, we need the following proposition.

\begin{figure}[h]
\centering
\includegraphics[width=200pt]{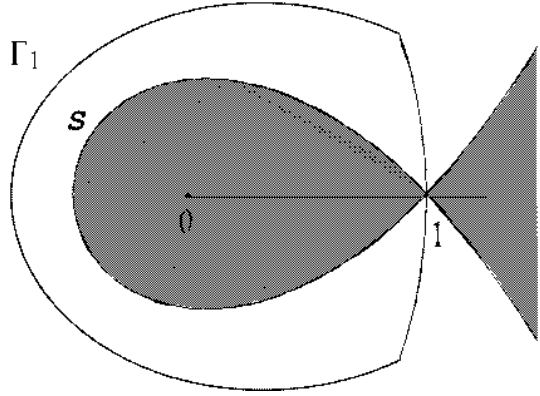}
\caption{$\mathcal{S}$ is the Szeg\H{o} curve. The dark region is the region  where $\re (z - 1 -\log{z}) > 0$, the white region
is the region where $\re (z - 1 -\log{z}) < 0$.} \label{contourt2phi}
\end{figure}

\begin{prop} \label{b-delta}

Let $\delta$ be a fixed small constant  and $B_\delta : = \{ z \in \mathbb{C} \ | \ |z-1|< \delta \}$. Then there exist $\eta>0$
and $\varepsilon > 0$ such that for all $t \in \mathbb{R}$ with $|t-1|<\eta$ and all $n$ large enough, we have
\[ \re \phi_{t,n}(z) < - \varepsilon \qquad
    \text{for } z \in \Gamma_1 \setminus B_\delta \]
 and
 \[ \re \phi_{t,n}(z) > \varepsilon ( |z| + 1 ) \qquad
    \text{for } z \in (0,\infty) \setminus B_\delta. \]
The same inequalities hold for $\re \tilde\phi_{t,n}(z)$.

\end{prop}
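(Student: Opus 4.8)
The plan is to exploit the fact that, by Lemma~\ref{gamma0-szego} and \eqref{phi-1inf}, $\phi_{t,n}(z)$ is a small ($O(t-1)$) perturbation of the limiting function $\tfrac12(z-1-\log z)$, uniformly on compact subsets of $\mathbb{C}\setminus\{0\}$ bounded away from $\infty$, and then supplement this with a separate argument handling the behaviour near $z=0$ and near $z=\infty$. First I would fix a reference function $\psi(z):=\tfrac12(z-1-\log z)$ and recall from Figure~\ref{contourt2phi} (and the elementary analysis of the Szeg\H{o} curve) the sign pattern: $\re\psi<0$ strictly in the open region bounded by $\mathcal{S}$ and $\re\psi>0$ strictly outside. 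Since $\Gamma_1$ was chosen (Section~3.4, third transformation) to be a fixed closed curve with $\mathcal{S}$ in its interior, the compact set $\Gamma_1\setminus B_\delta$ lies in the \emph{exterior-closure} minus a neighbourhood of the tangency locus; but in fact $\Gamma_1$ is chosen to coincide with $\{\im z=\arg z\}$ near $z=1$, and away from $B_\delta$ the curve $\Gamma_1$ sits strictly inside the region $\{\re\psi<0\}$. Hence $\re\psi(z)\le -2\varepsilon_0<0$ for $z\in\Gamma_1\setminus B_\delta$ with $|z|$ bounded, and similarly $(0,\infty)\setminus B_\delta$ consists of points on the real line where $\re\psi(x)=\tfrac12(x-1-\log|x|)>0$ strictly (the only zero of $x-1-\log x$ on $(0,\infty)$ is $x=1$), so $\re\psi(x)\ge\varepsilon_0(|x|+1)$ for $x$ in any set of the form $[\delta_0,M]\setminus B_\delta$; the linear growth is clear as $x\to\infty$ since $\psi(x)\sim x/2$.

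Next I would upgrade these strict inequalities for $\psi$ to inequalities for $\phi_{t,n}$. On the compact part, write $\phi_{t,n}(z)=\phi_n(z)+(t-1)\phi_n^0(z)$ with $\phi_n^0(z)=R_n(z)/2\to (z-1)/2$, and $\phi_n(z)\to\psi(z)$, both uniformly on $\{\delta_0\le|z|\le M\}$ (minus the relevant cuts); choosing $\eta$ small and $n$ large makes $|\phi_{t,n}(z)-\psi(z)|<\varepsilon_0$ there, which gives $\re\phi_{t,n}<-\varepsilon_0$ on $\Gamma_1\setminus B_\delta$ inside $|z|\le M$ and $\re\phi_{t,n}>\varepsilon_0(|z|+1)$ on $(\delta_0,M]\setminus B_\delta$ after possibly shrinking $\varepsilon_0$. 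The neighbourhood of $z=0$ is handled by the double-pole structure of the quadratic differential $-R_n(s)^2/s^2\,ds^2$: near $0$, $\phi_n(z)=\tfrac12\int_{\beta_{1,n}}^z \tfrac{R_n(s)}{s}\,ds$ behaves like $-\tfrac12\sqrt{\beta_{1,n}\beta_{2,n}}\,\log z + O(1)$, and since $\sqrt{\beta_{1,n}\beta_{2,n}}\to 1$ with a positive real square root, $\re\phi_{t,n}(z)\to+\infty$ as $z\to 0$ along $(0,\delta_0]$, uniformly in $n$ and $t$; this dominates $\varepsilon_0(|z|+1)$ for $\delta_0$ small. Finally, for $z\to\infty$ along $(M,\infty)$ the leading term $\tfrac12 t z$ of $\phi_{t,n}(z)$ (from Proposition~\ref{g-prop}(a), since $g_{t,n}^0=O(1/z)$ and one reads off $\phi_{t,n}(z)\sim\tfrac12(tz + A_n\log z + \cdots)$ up to the appropriate half-integer multiples of $\pi i$) grows like $\tfrac12 z$ in real part, so $\re\phi_{t,n}(z)\ge\varepsilon_0(|z|+1)$ there for $M$ large, $\eta$ small, $n$ large. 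Taking the minimum of the finitely many constants produced yields a single $\varepsilon>0$ and $\eta>0$ working on all of $\Gamma_1\setminus B_\delta$ and $(0,\infty)\setminus B_\delta$.

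For $\tilde\phi_{t,n}$ the argument is the same: on the relevant contours $\tilde\phi_n(z)$ differs from $\phi_n(z)$ only by $\pm\tfrac{\nu}{n}\pi i$ (relation \eqref{phi-phi}), which is purely imaginary and $O(1/n)$, so it does not affect real parts in the limit, and $\tilde\phi_{t,n}$ has the same $g^0$-correction $(t-1)\phi_n^0$; one invokes Proposition~3.2 of \cite{km2} exactly as in \cite{km2}. The main obstacle I anticipate is the uniformity in $t$ and $n$ \emph{simultaneously} near the endpoints $\beta_{1,n},\beta_{2,n}$ (which both approach $z=1$ and hence sit inside $B_\delta$ for large $n$, so they are excluded — this is precisely why $B_\delta$ is carved out) and, more seriously, making the estimate near $z=0$ genuinely uniform: one must control $\phi_n(z)$ on a \emph{fixed} punctured disc $\{0<|z|\le\delta_0\}$ while $\beta_{1,n}\to 1$, which requires the trajectory structure of the $n$-dependent quadratic differential to be stable — this is guaranteed by the construction of $\Gamma_{0,n}$ in \cite{km2} but needs to be quoted carefully. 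I would also need to check that $\Gamma_1\setminus B_\delta$ really stays in $\{\re\psi<0\}$; this is where the specific choice $\im z=\arg z$ near $z=1$ and the requirement "$\mathcal{S}$ in its interior" are used, and it is essentially the content of Figure~\ref{contourt2phi}.
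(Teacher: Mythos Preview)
Your proposal is correct and follows essentially the same route as the paper's proof: reduce to the limiting function $\psi(z)=\tfrac12(z-1-\log z)$ on compact sets via $\phi_n\to\psi$, handle $z\to 0$ and $z\to\infty$ by the explicit logarithmic and linear asymptotics of $\phi_n$, absorb the perturbation $(t-1)\phi_n^0$ by continuity since $\phi_n^0\to\tfrac12(z-1)$ uniformly, and finally note that $\tilde\phi_n-\phi_n=\pm\tfrac{\nu}{n}\pi i$ is purely imaginary so $\re\tilde\phi_{t,n}=\re\phi_{t,n}$. The paper's version is terser (it does not explicitly split off a fixed compact annulus but simply states the near-$0$ and near-$\infty$ behaviour and invokes independence of $\Gamma_1$ and $\delta$ from $n$), and your worries about uniformity near $\beta_{1,n},\beta_{2,n}$ are moot since for $n$ large both points lie inside $B_\delta$ and are therefore excluded from the statement.
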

\begin{proof}

First recall from (\ref{phi-1inf}), that  $\phi_{n}(z) \to \frac{1}{2} (z - 1 -\log{z})$ uniformly for $z$ bounded away from 0
and $\infty$; see Figure \ref{contourt2phi} for the property of $\re(z - 1 -\log{z})$. For $z$ near 0 and $\infty$ we have
\begin{equation*}
\phi_{n}(z) = \begin{cases} - \log{z} + O(1) & \hbox{as } z \to 0, \\ z + O(1) & \hbox{as } z \to \infty, \end{cases}
\end{equation*}
both uniformly for $n$ large enough. Since both the contour $\Gamma_1$ and the constant $\delta$ are independent of $n$, there
exists an $\varepsilon > 0 $ such that $\re \phi_{n}(z) < - \varepsilon$ on $\Gamma_1 \setminus B_\delta$ and $\re \phi_{n}(z) >
\varepsilon ( |z| + 1 )$ on $(0,\infty) \setminus B_\delta$ uniformly for $n$ large enough. From the fact that
\begin{equation*}
\phi_n^0(z) = \frac{R_n(z)}{2} \to  \frac{z-1}{2} \qquad \hbox{as } n \to \infty, \quad \hbox{unifomly for all } z,
\end{equation*}
and the definition of $\phi_{t,n}(z)$ in (\ref{phi-tz}), it follows by continuity that the same inequalities hold for
$\phi_{t,n}(z)$ if $t$ is sufficiently close to 1.

Since $\re \phi_{n}(z) = \re \tilde\phi_{n}(z)$ for all $z \in \mathbb{C}$, the inequalities also hold if we replace $\phi_n(z)$
by $\tilde{\phi}_n(z)$.
\end{proof}


\subsection{Construction of the local parametrix}

\subsubsection{RH problem for $P$} From Proposition
\ref{b-delta}, we know that the jump matrices for $S$ are exponentially close to the identity matrix if $n$ is large, except for
the ones in the neighborhood of $z=1$. Now, let us focus on the RH problem of $S$ restricted to a neighborhood $B_\delta $ of
$z=1$.  We seek a $2 \times 2$ matrix valued function $P$ that satisfies the following RH problem.
\begin{enumerate}
\item[(a)] $P(z)$ is analytic for $z \in B_\delta \setminus
\Sigma^S$, and continuous on $\overline{B}_\delta \setminus \Sigma^S$;

\item[(b)] $P(z)$ satisfies the same jump conditions on $\Sigma^S
\cap B_\delta$ as $S$ does; see (\ref{s-jump1})--(\ref{s-jump5});

\item[(c)] on $\partial B_\delta$, as $n \to \infty$
\begin{equation} \label{p-0condition}
    P(z) = \left( I + O \left( \frac{1}{\sqrt{n}} \right) \right) n^{\frac{b}{2} \sigma_3} \qquad
    \textrm{uniformly for } z \in \partial B_\delta \setminus
    \Sigma^S;
\end{equation}

\item[(d)] $P(z)$ satisfies the same local behavior near $z=1$ as
$S$ does; see (\ref{s-1-behave}) and (\ref{s-1-behave2}) with the same constants $c_{t,\pm}$ as given in (\ref{ct-constant2}).
\end{enumerate}

The factor $n^{ \frac{b}{2} \sigma_3}$ in the matching condition (\ref{p-0condition}) is special and unusual in the local
parametrix construction. It comes out of our construction with the RH problem for $\Psi$ from Section \ref{rh-piv} and we are
not able to remove or simplify it. However, we can deal with the factor in the final transformation in Section \ref{r-section}
below.


\subsubsection{Reduction to constant jumps} \label{rhp-q}
We look for $P$ in the following form
\begin{equation} \label{pp1}
    P(z) = \begin{cases} \sigma_1 Q(z) \sigma_1
    e^{(n \phi_{t,n}(z) -\frac{1}{2}(\nu + b)\pi i) \sigma_3} (z-1)^{-b \sigma_3}
\quad \textrm{for } z \in B_\delta \cap \Omega_0^S, \\
\sigma_1 Q(z) \sigma_1 e^{(n \tilde\phi_{t,n}(z) + \frac{1}{2}(\nu + b)\pi i) \sigma_3} (1-z)^{-b \sigma_3} \quad \textrm{for }
z \in B_\delta \cap \Omega_\infty^S,
\end{cases}
\end{equation}
where $\sigma_1 = \left(\begin{matrix} 0 & 1 \\ 1 & 0
\end{matrix} \right)$, and $(z-1)^b$ and $(1-z)^b$ are defined with branch
cuts along $[1, \infty)$ and $(-\infty,1]$, respectively. Thus we have $0 < \arg(z-1) < 2\pi$, $0 < \arg(1-z) < 2\pi$ and
\begin{equation} \label{z-1-cuts}
\arg(z-1) = \arg(1-z) \mp \pi i \quad \textrm{ for } z \in \mathbb{C}^{\pm}.
\end{equation}
The factors $e^{-n \phi_{t,n}(z) \sigma_3 } (z-1)^{b \sigma_3}$ and $e^{-n \tilde\phi_{t,n}(z) \sigma_3} (1-z)^{b \sigma_3}$ in
(\ref{pp1}) are introduced to cancel the corresponding factors in the jump matrices in (\ref{s-jump1})--(\ref{s-jump5}) and make
them independent of $z$. The result is that $Q$ will satisfy a RH problem with constant jumps.

Reorient the contour $\Sigma^S \cap B_{\delta}$ so that all parts are oriented away from the point $z=1$; see
Figure~\ref{sigmaq}. Then it can be verified that, in order for $P(z)$ to satisfy the desired RH problem, $Q(z)$ should satisfy
a RH problem as follows:
\begin{figure}[h]
\centering
\includegraphics[width=130pt]{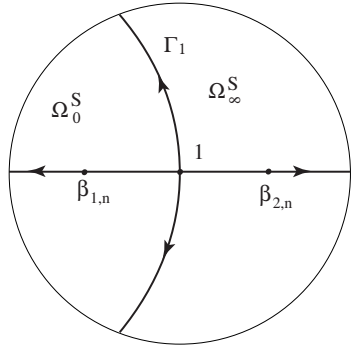}
\caption{The contour $\Sigma^S \cap B_\delta$ for the RH problem for $Q$} \label{sigmaq}
\end{figure}
\begin{enumerate}

\item[(a)] $Q(z)$ is analytic for $z \in B_\delta \setminus
\Sigma^S$, and continuous on $\overline{B}_\delta \setminus \Sigma^S$;

\item[(b)]
\begin{align}
    Q_+(z) = \ & Q_-(z) \begin{pmatrix} 1 & 0 \\ \frac{\sin(\nu + 2b) \pi}{\sin{\nu \pi}} e^{(\nu + b) \pi i} & 1
    \end{pmatrix}, \hspace{85pt}  z \in (1, 1 + \delta); \label{q-jump1} \\
    Q_+(z) = \ & Q_-(z) \begin{pmatrix} 1 & (e^{\nu \pi i} - e^{- \nu \pi i}) e^{b \pi i}  \\ 0 & 1
    \end{pmatrix}, \hspace{57pt} z \in \Gamma_1 \cap B_\delta \cap
    \mathbb{C}^+;  \\
    Q_+(z) = \ & Q_-(z) \begin{pmatrix} 1 & 0 \\ -e^{-(\nu + b)\pi i}  & 1 \end{pmatrix},
    \hspace{112pt} z \in (1- \delta,1); \label{q-jump3} \\
    Q_+(z) = \ & Q_-(z) \begin{pmatrix} 1 & (e^{-\nu \pi i} - e^{\nu \pi i}) e^{(2\nu + b) \pi i} \\ 0 & 1
    \end{pmatrix} e^{2(\nu + b) \pi i \sigma_3}, \ z \in \Gamma_1 \cap B_\delta \cap \mathbb{C}^-;
    \label{q-jump4}
\end{align}

\item[(c)] for $z \in \partial B_\delta$ as $n \to \infty$,
\begin{align}
    Q(z) & = \left( I + O\left( \frac{1}{\sqrt{n}} \right) \right) (\sqrt{n}(z-1))^{-(\nu + b) \sigma_3}
    \exp\biggl[\frac{n}{2} (z-1- \log{z}) \nonumber \\
    & \hspace{15pt} + \frac{n^{\frac{1}{2}}  \, L }{\sqrt{2}} (z-1)
    + \frac{\nu}{2} (\log{z} + \log{\nu} - 1) + \frac{1}{2}(\nu - b)\pi i  \biggr] \sigma_3 \label{q-match}
\end{align}
where $\log{z}$ and $(z-1)^{-(\nu + b)}$ are defined with cuts along $(-\infty, 0]$ and along $\Gamma_1 \cap B_\delta \cap
\mathbb{C}^-$, respectively;

\item[(d)] $Q(z)$ has the following behavior  as $z \to 1$:
\begin{align}
    Q(z) \left( \begin{matrix} 0 & (e^{-\nu \pi i} - e^{\nu \pi i}) e^{b \pi i}
    \\  \frac{e^{-b \pi i}}  {e^{\nu \pi i} - e^{-\nu \pi i}}
    & 1 \end{matrix} \right) (z-1)^{b \sigma_3} & =
    O (1), & z \in \Omega_0^S \cap \mathbb{C}^+; \label{q-0-behave1} \\
    Q(z) \left( \begin{matrix} 0 & (e^{-\nu \pi i} - e^{\nu \pi i}) e^{b \pi i}
    \\  \frac{e^{-b \pi i}}{e^{\nu \pi i} - e^{-\nu \pi i}}
    & e^{-2 \nu \pi i} \end{matrix} \right) (z-1)^{b \sigma_3} & =
    O (1), & z \in \Omega_0^S \cap \mathbb{C}^-; \\
    Q(z) \left( \begin{matrix} 1 & 0 \\ \frac{e^{-b \pi i}}{e^{\nu \pi i} - e^{-\nu \pi i}}
    & 1 \end{matrix} \right) (z-1)^{b \sigma_3} & =
    O (1), & z \in \Omega_\infty^S \cap \mathbb{C}^+; \label{q-0-behave3} \\
    Q(z) \left( \begin{matrix} 1 & 0 \\ \frac{e^{(2 \nu + 3b) \pi i}}{e^{\nu \pi i} - e^{-\nu \pi i}}
    & 1 \end{matrix} \right) (z-1)^{b \sigma_3} & =
    O (1), & z \in \Omega_\infty^S \cap \mathbb{C}^-
    \label{q-0-behave4};
\end{align}
where now $(z-1)^{b}$ is defined with a cut along $\Gamma_1 \cap B_\delta \cap \mathbb{C}^-$.
\end{enumerate}

In the above RH problem for $Q$, it is notable that, although $\beta_{2,n} \in (1, 1+\delta)$, the jump matrix in
(\ref{q-jump1}) is the same for $z < \beta_{2,n}$ and $z > \beta_{2,n}$. This is in contrast to the RH problem for $P$, where we
have different expressions for $z < \beta_{2,n}$ and $z
> \beta_{2,n}$. A similar thing happens for the jump matrix in (\ref{q-jump3}).

To obtain the matching condition in (\ref{q-match}), we need to make use of the following asymptotic formulas for $\phi_n^0(z)$
and $\phi_{n}(z)$ as $n \to \infty$. For $z \in \partial B_\delta$, we have from (\ref{phi0})
\begin{equation} \label{phi0a}
\phi_n^0(z) = \frac{1}{2}(z-1) - \frac{\nu (z+1)}{2n(z-1)} + O \left( \frac{1}{n^2} \right)
\end{equation}
and from (\ref{phi-1n}), calculated with the assistance of Maple,
\begin{align}
\phi_{n}(z) & = \frac{1}{2}(z - 1 -\log{z}) - \frac{\nu}{2n} \log{n} + \frac{\nu}{2n}
\Big( -2 \log(z-1) + 2 \pi i + \log{z} + \log{\nu} - 1 \Big) \nonumber \\
& \quad + \frac{\nu^2}{n^2 (z-1)^2} + O \left( \frac{1}{n^3} \right), \label{phi1a}
\end{align}
where $\log{z}$ and $\log(z-1)$ are defined with cuts along $(-\infty,0]$ and $[1, \infty)$, respectively; see also \cite{dev}
and \cite{kmm}. For $z \in
\partial B_\delta \cap \Omega_0^S$, we have from (\ref{phi-tz}), (\ref{p-0condition}) and (\ref{pp1})
\begin{equation*}
Q(z)  = \left( I + O(\frac{1}{\sqrt{n}}) \right) (\sqrt{n}(z-1))^{-b \sigma_3} e^{(n (\phi_{n}(z) + (t-1) \phi_n^0(z))
-\frac{1}{2}(\nu + b)\pi i) \sigma_3}.
\end{equation*}
Substituting (\ref{phi0a}) and (\ref{phi1a}) into the above formula, we get
\begin{equation*}
\begin{split}
Q(z) & = \left( I + O(\frac{1}{\sqrt{n}}) \right) (\sqrt{n}(z-1))^{-(\nu + b) \sigma_3} \exp\biggl[\frac{n}{2} (z- 1 - \log{z})  \\
& \hspace{15pt} + \frac{\nu}{2} (2 \pi i + \log{z} + \log{\nu} - 1) + O(\frac{1}{n}) + \frac{n(t-1)}{2}\left((z-1)+
O(\frac{1}{n})\right) - \frac{1}{2}(\nu + b)\pi i \biggr] \sigma_3,
\end{split}
\end{equation*}
where $\log{z}$ and $(z-1)^{-(\nu + b)}$ are defined with cuts along $(-\infty, 0]$ and $\Gamma_1 \cap B_\delta \cap
\mathbb{C}^-$, respectively. Recalling the definition of $t$ in (\ref{t-def-2}), the above formula immediately gives us
(\ref{q-match}) for $z \in
\partial B_\delta \cap \Omega_0^S$. Similarly, we can prove that (\ref{q-match})
also holds for $z \in \partial B_\delta \cap \Omega_\infty^S$.

To obtain the limiting behaviors (\ref{q-0-behave1})--(\ref{q-0-behave4}) near $ z= 1$, some careful calculations are needed. We
take (\ref{q-0-behave4}) as an example and (\ref{q-0-behave1})--(\ref{q-0-behave3}) can be obtained in a similar way. From
(\ref{pp1}), (\ref{z-1-cuts}) and condition (d) for the RH problem for $P$, we have, as $z \to 1, z \in \Omega_\infty^S \cap
\mathbb{C}^-$
\begin{equation}\label{q-match-step1}
Q(z)  e^{-(n \tilde\phi_{t,n,-}(1) + \frac{1}{2}(\nu + b)\pi i) \sigma_3} ((z-1) e^{- \pi i})^{b \sigma_3}
\left( \begin{matrix} 1 & 0 \\
c_{t,-}(z-1)^{2b} & 1 \end{matrix} \right) = O (1),
\end{equation}
where $(z-1)^{b}$ is defined with a cut along $[1,\infty)$ and $c_{t,-}$ is given in (\ref{ct-constant2}). Changing the branch
cut of $(z-1)^{b} $ to $\Gamma_1 \cap B_\delta \cap \mathbb{C}^-$, we get
\begin{equation} \label{z-1-branch}
(z-1) \mapsto (z-1) e^{2 \pi i} \qquad \textrm{for } z \in \Omega_\infty^S \cap \mathbb{C}^-.
\end{equation}
From (\ref{q-match-step1}) and (\ref{z-1-branch}), it follows that, as $z \to 1, z \in \Omega_\infty^S \cap \mathbb{C}^-$
\begin{equation*}
Q(z)  e^{-(n \tilde\phi_{t,n,-}(1) + \frac{1}{2}(\nu + b)\pi i) \sigma_3} ((z-1) e^{ \pi i})^{b \sigma_3}
\left( \begin{matrix} 1 & 0 \\
c_{t,-}(z-1)^{2b} e^{4 b \pi i} & 1 \end{matrix} \right) = O (1).
\end{equation*}
Right multiplying $e^{(n \tilde\phi_{t,n,-}(1) + \frac{1}{2}(\nu + b)\pi i) \sigma_3} e^{ -b \pi i \sigma_3}$ on both sides of
the above equation yields, as $z \to 1, z \in \Omega_\infty^S \cap \mathbb{C}^-$
\begin{equation*}
Q(z) (z-1)^{b \sigma_3} \left( \begin{matrix} 1 & 0 \\
c_{t,-} (z-1)^{2b}  e^{ 2n \tilde\phi_{t,n,-}(1) + (\nu + 3b) \pi i} & 1
\end{matrix} \right)  = O (1).
\end{equation*}
Recalling (\ref{phi-phi}), (\ref{ct-constant2}) and moving $(z-1)^{b \sigma_3}$ to the right of the lower triangular matrix, we
obtain (\ref{q-0-behave4}).


\subsubsection{Comparison with the RH problem for $\Psi$} \label{case1-construct1}

Now we compare the RH problem for $Q$ with the RH problem for $\Psi$ in Section \ref{rh-piv}. From
(\ref{q-jump1})--(\ref{q-jump4}) we see that we need the Stokes multipliers
\renewcommand{\arraystretch}{1.5}
\begin{equation} \label{case1-sm}
\begin{array}{ll}
s_1 = \displaystyle\frac{\sin(\nu + 2b) \pi}{\sin{\nu \pi}} e^{(\nu + b) \pi i}, & s_2 =(e^{\nu \pi i} - e^{-\nu \pi i})
e^{b \pi i}, \\
s_3 = -e^{-(\nu + b) \pi i}, & s_4 = (e^{-\nu \pi i} - e^{\nu \pi i}) e^{(2 \nu + b ) \pi i}.
\end{array}
\end{equation}
\renewcommand{\arraystretch}{1}
Because of the factors $(z-1)^{b \sigma_3}$ in (\ref{q-0-behave1})--(\ref{q-0-behave4}) and $(z-1)^{- (\nu + b) \sigma_3} $ in
(\ref{q-match}), we are led to choose
\begin{equation} \label{theta-def}
\Theta = - b, \qquad \Theta_{\infty} = \nu + b.
\end{equation}
Then the above Stokes multipliers are obtained from those in (\ref{case0-sm}), by applying the transformation in
(\ref{sm-trans}) with
\[ d = (e^{\nu \pi i} - e^{- \nu \pi i })^{-1}. \]
Again it is important that $\nu$ is not an integer, since this ensures $d \neq \infty$. Furthermore, from (\ref{c-explicit-2}),
we get
\begin{equation*}
\mathcal{C}^{-1} = \left(
\begin{matrix} 1 & 0 \\ d e^{(2 \Theta_\infty - \Theta) \pi i} & 1
\end{matrix} \right) = \left( \begin{matrix} 1 & 0 \\ \frac{e^{(2 \nu + 3b) \pi i}}{e^{\nu \pi i} - e^{-\nu \pi i}}
& 1 \end{matrix} \right),
\end{equation*}
which appears exactly in (\ref{q-0-behave4}). Thus the jump matrices and the local behavior near the point $1$ in the RH problem
for $Q$ correspond exactly to the jump matrices and the behavior near the origin in the RH problem for $\Psi$.

From (\ref{q-0-behave1})--(\ref{q-0-behave4}), we do not want a logarithmic singularity at point 1 for $Q$ for all values of
$b$. This is special when we want to construct our parametrix for $Q$ by using the $\Psi$\,-function in Section \ref{rh-piv}.
Note that $\l = 0$ is a Fuchsian singular point of the equation $\D\frac{\partial \Psi}{\partial \l} = A \Psi$ in
(\ref{PIVsystem}) and the leading term of $A$ as $\l \to 0$ is
\begin{equation*}
\frac{1}{\l} \left[ (\Theta - K) \sigma_3 - \frac{u \, y}{2} \sigma_+ + \frac{2 K}{ u \, y} (K - 2 \Theta) \sigma_- \right].
\end{equation*}
The eigenvalues of the above coefficient matrix are $\Theta$ and $-\Theta$. When $\Theta = m/2, m \in \mathbb{Z}$, usually this
is a resonant case and there is a logarithmic singularity at 0 for $\Psi$; see \cite[Section 1.3]{fikn}. In this paper, we are
going to make use of $\Psi$\,-function which is free from logarithmic singularities. Then with $\Theta = -b$, $\Theta_\infty =
\nu + b$ and the Stokes multipliers given in (\ref{case1-sm}), in Section \ref{special} we will see that there exist special
function solutions to PIV when $b = m/2, m \in \mathbb{Z}$. Also see \cite{fn} for similar cases for Painlev\'e II solutions.

\subsubsection{Construction of $Q$} \label{case1-construct2}

Using the RH problem for $\Psi$, we now construct $Q$ as follows. We use the mapping function
\begin{equation} \label{f-def}
    f(z)= [z - 1 - \log z]^{1/2}
\end{equation}
which is a conformal map from a neighborhood of $1$ onto a neighborhood of $0$. We have
\begin{equation} \label{f-expansion}
    f(z) = \frac{1}{\sqrt{2}}(z-1) - \frac{\sqrt{2}}{6}(z-1)^2 +
    O((z-1)^3) \qquad \textrm{as } z \to 1.
\end{equation}
Since for $z \in \Gamma_1 \cap B_{\delta}$ we have that $\im z = \arg z$, it can be shown that $f$ maps $\Gamma_1 \cap B_\delta$
to the imaginary axis.

Now we define
\begin{equation} \label{p1con}
Q(z) =  E(z) \Psi \left(n^{\frac{1}{2}} f(z), L \frac{z-1}{\sqrt{2} f(z)}\right)
\end{equation}
where
\begin{equation} \label{e-def}
E(z) = \left( \frac{f(z)}{z-1} \right)^{(\nu  + b) \sigma_3} (\nu e^{-1} \; z)^{\frac{\nu}{2} \sigma_3} e^{\frac{1}{2} (\nu - b
) \pi i \sigma_3},
\end{equation}
$L$ is given in (\ref{alpha-N-def}) and $\Psi(\cdot, \cdot)$ is the solution of the RH problem (\ref{psi-rhb})--(\ref{psi-0})
with $\Theta, \Theta_\infty$ given in (\ref{theta-def}) and Stokes multipliers $s_j$ given in (\ref{case1-sm}). Because of
(\ref{f-expansion}) we have that $E$ is analytic in a (small enough) neighborhood of $z=1$. We also see that
\[  L \frac{z-1}{\sqrt{2} f(z)} \to L \qquad \textrm{as } z \to 1,
\]
Since $L$ is (by assumption) not a pole of the PIV solution $u(s)$, there is a small enough $\delta > 0$ (depending on $L$) so
that
\[ z \mapsto s = L \frac{z-1}{\sqrt{2} f(z)} \]
maps $\overline{B}_{\delta}$ to a compact subset of the $s$-plane that does not contain any poles of $u(s)$. Then $\Psi(\lambda,
L \frac{z-1}{\sqrt{2} f(z)})$ exists for all $z \in B_{\delta}$, and $Q(z)$ is well-defined and analytic for $z \in B_{\delta}
\setminus \Sigma^S$.

It is now easy to check that $Q$ satisfies the jump conditions (\ref{q-jump1})--(\ref{q-jump4}). We also find that the behavior
as $z \to 1$ in (\ref{q-0-behave4}) is satisfied. This follows from the corresponding behavior as $\lambda \to 0$ in the RH
problem for $\Psi$. The behaviors as $z \to 1$ in (\ref{q-0-behave1})--(\ref{q-0-behave3}) follow from this and the jump
conditions in the RH problem for $\Psi$.

So what remains is to check the matching condition (\ref{q-match}) in the RH problem for $Q$. For this we need to note that the
asymptotic condition (\ref{psi-asy}) holds uniformly for $s$ in compact sets away from the poles of $u(s)$. Thus by the
definition (\ref{p1con}) we have for $z \in \partial B_{\delta}$ as $n \to \infty$
\begin{equation*}
Q(z) = E(z) \left( I + O\left(\frac{1}{\sqrt{n}}\right) \right) \exp\biggl[\frac{n}{2} f^2(z) \, \sigma_3 +
\frac{n^{\frac{1}{2}} \sqrt{2} \, L (z-1)}{2} \, \sigma_3 \biggr] \left( \frac{1}{n^{1/2} f(z)} \right)^{(\nu+ b) \sigma_3},
\end{equation*}
where $f(z)^{-(\nu + b)}$ is defined with a cut along $\Gamma_1 \cap B_\delta \cap \mathbb{C}^-$. Since $E(z)$ is independent of
$n$, then
\begin{equation*}
Q(z) = \left( I + O\left(\frac{1}{\sqrt{n}}\right) \right) \exp\biggl[\frac{n}{2} f^2(z) \, \sigma_3 + \frac{n^{\frac{1}{2}}
\sqrt{2} \, L (z-1)}{2} \, \sigma_3 \biggr] \left( \frac{1}{n^{1/2} f(z)} \right)^{(\nu+ b) \sigma_3} E(z).
\end{equation*}
From the definition of $E(z)$ in (\ref{e-def}), we get
\begin{align}
Q(z) & = \left( I + O\left(\frac{1}{\sqrt{n}}\right) \right) (\sqrt{n}(z-1))^{-(\nu + b) \sigma_3}
\exp\biggl[\frac{n}{2} f^2(z) + \frac{n^{\frac{1}{2}} \sqrt{2} \, L (z-1)}{2} \nonumber \\
& \quad + \frac{\nu}{2} (\log{z} + \log{\nu} - 1) + \frac{1}{2}(\nu - b)\pi i  \biggr] \sigma_3, \label{q-largen}
\end{align}
where $\log{z}$ and $(z-1)^{-(\nu + b)}$ are defined with cuts along $(-\infty, 0]$ and $\Gamma_1 \cap B_\delta \cap
\mathbb{C}^-$, respectively. Recalling the definition of $f(z)$ in (\ref{f-def}), we obtain (\ref{q-match}) from
(\ref{q-largen}).

This completes the construction of $Q$.


\subsection{Final transformation} \label{r-section}

Now let us consider the difference between the exact solution $S$ and the parametrix $P$ constructed. Define
\begin{equation} \label{r-s}
R(z) = \begin{cases} n^{\frac{b}{2} \sigma_3} S(z) n^{-\frac{b}{2} \sigma_3},
& z \in \mathbb{C} \setminus (\overline{B}_\delta \cup \Sigma^S), \\
n^{\frac{b}{2} \sigma_3} S(z) P^{-1}(z), & z \in B_\delta \setminus \Sigma^S.
\end{cases}
\end{equation}
From the RH problems for $S$ and $P$, since $P$ is analytic for $z \in B_\delta \setminus \Sigma^S$ and satisfies the same jump
conditions on $\Sigma^S \cap B_\delta$ as $S$ does, $R(z)$ is analytic for $z \in B_\delta$ except for a possible pole at $z =
1$. Moreover, because $S$ and $P$ have the same local behavior near $z=1$, $R(z)$ has to be analytic at $z=1$. Thus, we get a RH
problem for $R(z)$ on a contour $\Sigma^R$ as follows:

\begin{figure}[h]
\centering
\includegraphics[width=200pt]{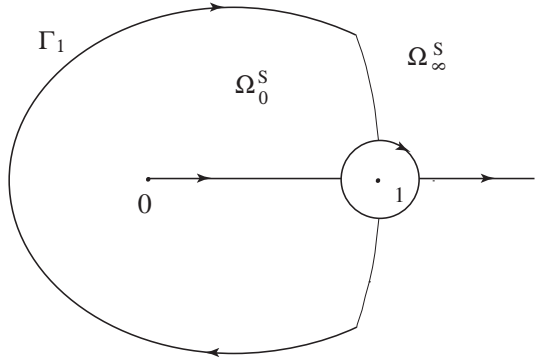}
\caption{Contour $\Sigma^R$ for the RH problem for $R$} \label{contourr}
\end{figure}

\begin{enumerate}

\item[(a)] $R(z)$ is analytic for $z \in \mathbb{C} \setminus \Sigma^R$;
see Figure~\ref{contourr};

\item[(b)] $R_+(z) = R_-(z) J_R(z)$ for $z \in \Sigma^R$;

\item[(c)] $R(z) = I + O(1/z)$ as $z \to \infty$;

\item[(d)] $R(z) = O(1)$ as $z \to 1$.

\end{enumerate}
The jump matrices in the above RH problem are given by
\begin{eqnarray}
J_R(z) & = & \left( \begin{matrix} 1 & 0 \\ \frac{e^{-\nu \pi i} - e^{\nu \pi i}}{(z-1)^{2b}} n^{-b}  e^{n(\phi_{t,n}(z) +
\tilde\phi_{t,n}(z))}
& 1 \end{matrix} \right), \hspace{60pt}  z \in \Gamma_1 \setminus B_\delta, \nonumber \\
J_R(z) & = & \left( \begin{matrix} 1 & n^b  (z-1)^{2b} e^{-2n \phi_{t,n}(z)} \\ 0 & 1 \end{matrix} \right), \hspace{85pt} z
\in (0,1- \delta), \nonumber \\
J_R(z) & = & \left( \begin{matrix} 1 & n^b  |z-1|^{2b}  \frac{\sin(\nu + 2b) \pi }{\sin{\nu \pi}}
e^{2b \pi i} e^{-2n \tilde\phi_{t,n}(z)} \\
0 & 1 \end{matrix} \right), \qquad z \in (1+ \delta, \infty), \nonumber \\
J_R(z) & = & P(z) n^{-\frac{b}{2} \sigma_3}, \hspace{200pt} z \in \partial B_\delta.
\end{eqnarray}
From Proposition \ref{b-delta}, the jump matrices are exponentially close to the identity matrix if $n$ is large except for the
one on $\partial B_\delta$. For $\partial B_\delta$, we have the following lemma:
\begin{lem} \label{lem-jr}

The jump matrix $J_R(z)$ on $\partial B_\delta$ is given by
\begin{equation} \label{p-asy}
J_R(z) = P(z) n^{-\frac{b}{2} \sigma_3} = I + \frac{1}{\sqrt{n}} P^{(-1)}(z) + \frac{1}{n} P^{(-2)}(z) + O(n^{-\frac{3}{2}}),
\end{equation}
uniformly for $z \in \partial B_\delta$, where $P^{(-1)}(z)$ and $P^{(-2)}(z)$ are given by, for $z \in B_\delta$
\begin{equation} \label{p-1}
P^{(-1)}(z) = \frac{1}{f(z)} E^{-1}(z) \sigma_1 \Psi_{-1}\left( L \frac{z-1}{\sqrt{2} f(z)} \right) \sigma_1 E(z) - \frac{\nu \,
L (z+1)}{\sqrt{2} \, (z-1)} \sigma_3
\end{equation}
and
\begin{equation} \label{p-2}
\begin{split}
P^{(-2)}(z) & = \frac{1}{f^2(z)} E^{-1}(z) \sigma_1 \Psi_{-2}\left( L \frac{z-1}{\sqrt{2} f(z)} \right) \sigma_1 E(z) +
\frac{\nu^2 \, {L}^2 (z+1)^2}{4
(z-1)^2} I \\
& \quad + \frac{\nu^2}{(z-1)^2} \sigma_3 - \frac{\nu \, L (z+1)}{\sqrt{2} \, (z-1) f(z)} E^{-1}(z) \sigma_1 \Psi_{-1}\left( L
\frac{z-1}{\sqrt{2} f(z)} \right) \sigma_1 E(z) \sigma_3.
\end{split}
\end{equation}
Here $E(z)$ is given by (\ref{e-def}), $\Psi_{-1}$ and $\Psi_{-2}$ are given in (\ref{psi-asy}).
\end{lem}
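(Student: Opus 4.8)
\noindent\emph{Proof plan.} The plan is to substitute the explicit construction of the parametrix into $J_R(z)=P(z)n^{-\frac b2\sigma_3}$ on $\partial B_\delta$ and expand every factor one order beyond what was needed for the matching condition. On $B_\delta\cap\Omega_0^S$ we have, by \eqref{pp1} and \eqref{p1con},
\[ P(z)=\sigma_1 Q(z)\sigma_1\,e^{(n\phi_{t,n}(z)-\frac12(\nu+b)\pi i)\sigma_3}(z-1)^{-b\sigma_3},\qquad Q(z)=E(z)\,\Psi\!\left(n^{1/2}f(z),\ L\tfrac{z-1}{\sqrt2 f(z)}\right). \]
Since $E(z)$ and the prefactor $e^{(\frac{\lambda^2}{2}+s\lambda)\sigma_3}\lambda^{-\Theta_\infty\sigma_3}$ in \eqref{psi-asy} are both of the form $(\text{scalar})^{\sigma_3}$, conjugation by $\sigma_1$ inverts them; writing $\Psi=M(\lambda,s)\,G(\lambda,s)$ with $M=I+\Psi_{-1}(s)/\lambda+\Psi_{-2}(s)/\lambda^2+O(\lambda^{-3})$, $\lambda=n^{1/2}f(z)$ and $s=L\tfrac{z-1}{\sqrt2 f(z)}$, one obtains $J_R(z)=E^{-1}(z)\bigl(\sigma_1 M(z)\sigma_1\bigr)H(z)$, where $H(z)$ is the product of all the remaining diagonal factors.

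First I would show that $H$ collapses. Substituting $\lambda^2/2=\tfrac n2 f^2(z)=\tfrac n2(z-1-\log z)$, $s\lambda=\tfrac{n^{1/2}L(z-1)}{\sqrt2}$ and $\Theta_\infty=\nu+b$ into the exponent of $H$, together with the refined expansions of $n\phi_n(z)-\tfrac n2 f^2(z)$ coming from \eqref{phi1a} and of $\sqrt2\,Ln^{1/2}\phi_n^0(z)-\tfrac{n^{1/2}L(z-1)}{\sqrt2}$ coming from \eqref{phi0a}, the $\log n$ contributions cancel and the $O(1)$ part of the exponent is precisely that of $E(z)$ — this is exactly the computation \eqref{q-largen}--\eqref{q-match} behind the matching condition, kept one order further. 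Hence $H(z)=E(z)\,e^{h_>(z)\sigma_3}$ with
\[ h_>(z)=-\frac{\nu L(z+1)}{\sqrt2\,n^{1/2}(z-1)}+\frac{\nu^2}{n(z-1)^2}+O(n^{-3/2}), \]
so that $J_R(z)=\bigl(E^{-1}(z)(\sigma_1 M(z)\sigma_1)E(z)\bigr)\,e^{h_>(z)\sigma_3}$.

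It then remains to multiply the two expansions. From the form of $M$ one has $E^{-1}(\sigma_1 M\sigma_1)E=I+\dfrac{E^{-1}\sigma_1\Psi_{-1}(s)\sigma_1 E}{n^{1/2}f(z)}+\dfrac{E^{-1}\sigma_1\Psi_{-2}(s)\sigma_1 E}{n f^2(z)}+O(n^{-3/2})$, while $e^{h_>\sigma_3}=I-\dfrac{\nu L(z+1)}{\sqrt2\,n^{1/2}(z-1)}\sigma_3+\dfrac1n\Bigl(\dfrac{\nu^2}{(z-1)^2}\sigma_3+\dfrac{\nu^2 L^2(z+1)^2}{4(z-1)^2}I\Bigr)+O(n^{-3/2})$, the $I$-term being the contribution of $\tfrac12 h_>^2$. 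Collecting powers of $n$ gives \eqref{p-asy} with $P^{(-1)}$ and $P^{(-2)}$ exactly as in \eqref{p-1}--\eqref{p-2}; in particular the cross term $-\tfrac{\nu L(z+1)}{\sqrt2(z-1)f(z)}E^{-1}\sigma_1\Psi_{-1}(s)\sigma_1 E\,\sigma_3$ in $P^{(-2)}$ is produced at this step. Uniformity in $z\in\partial B_\delta$ follows from the uniformity of \eqref{psi-asy} for $s$ in compact sets free of poles of $u$, which applies since $z\mapsto s(z)$ maps $\overline B_\delta$ into such a set by the choice of $\delta$. I do not expect a genuine obstacle here; the delicate points are purely bookkeeping — keeping the branch cuts of $f$, of $(z-1)^{-(\nu+b)}$ and of $\lambda^{-\Theta_\infty}$ mutually consistent, tracking the $\sigma_1$-conjugations, and verifying that the parallel computation on $B_\delta\cap\Omega_\infty^S$ (which uses $\tilde\phi_{t,n}$, the factor $(1-z)^{-b\sigma_3}$ and the relation $\phi_{t,n}=\tilde\phi_{t,n}\pm\tfrac\nu n\pi i$ from \eqref{phi-phi}) yields the same $P^{(-1)}$ and $P^{(-2)}$, so that these formulas are indeed analytic throughout $B_\delta$.
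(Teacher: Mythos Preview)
Your proposal is correct and follows essentially the same approach as the paper's own proof: substitute the explicit form of $P$ in terms of $\Psi$, conjugate by $\sigma_1$ to invert the diagonal factors, use the refined expansions \eqref{phi0a}--\eqref{phi1a} to see that the diagonal factor $H(z)$ equals $E(z)e^{h_>(z)\sigma_3}$ with the explicit $h_>(z)$ you wrote, and then multiply out the two resulting $n^{-1/2}$-expansions. The paper's argument is identical (it arrives at exactly your display $J_R=E^{-1}(\sigma_1 M\sigma_1)E\,e^{h_>\sigma_3}$ and then reads off $P^{(-1)}$, $P^{(-2)}$), and it likewise notes without detail that the computation on $B_\delta\cap\Omega_\infty^S$ yields the same formulas.
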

\begin{proof}

Since $Q(z) = E(z) \Psi\left(n^{\frac{1}{2}} f(z), L \frac{z-1}{\sqrt{2} f(z)}\right)$, we have from (\ref{phi-tz}) and
(\ref{pp1})
\begin{equation} \label{p&psi}
P(z)=  \sigma_1 E(z) \Psi\left(n^{\frac{1}{2}} f(z), L \frac{z-1}{\sqrt{2} f(z)} \right) \sigma_1
    e^{(n (\phi_{n}(z) + (t-1) \phi_n^0(z)) -\frac{1}{2}(\nu + b)\pi i) \sigma_3} (z-1)^{-b \sigma_3}
\end{equation}
for $z \in B_\delta \cap \Omega_0^S$. Using the formulas for $\phi_n^0(z)$ and $\phi_{n}(z)$ in (\ref{phi0a})--(\ref{phi1a}),
the definition of $E(z)$ in (\ref{e-def}) and the asymptotic formula for $\Psi$ in (\ref{psi-asy}), we get from (\ref{p&psi})
\begin{align}
P(z) n^{-\frac{b}{2} \sigma_3} = & E^{-1}(z) \Big[ I + \frac{\sigma_1 \Psi_{-1}\left( L \frac{z-1}{\sqrt{2} f(z)} \right)
\sigma_1 }{\sqrt{n} f(z)} + \frac{\sigma_1 \Psi_{-2}\left( L \frac{z-1}{\sqrt{2} f(z)} \right) \sigma_1}{n f^2(z)}
 + O(n^{-\frac{3}{2}}) \Big] \nonumber \\
& \times  E(z) \; \exp \left[ - \frac{\nu \, L (z+1)}{\sqrt{2n} \, (z-1)} \sigma_3 + \frac{\nu^2}{n \, (z-1)^2} \sigma_3
+O(n^{-\frac{3}{2}}) \right],
\end{align}
which gives us (\ref{p-1}) and (\ref{p-2}) for $z \in B_\delta \cap \Omega_0^S$. Similarly, one can prove (\ref{p-1}) and
(\ref{p-2}) also hold for $z \in B_\delta \cap \Omega_\infty^S$.
\end{proof}

For large $n$, the jump matrix $J_R$ is close to the identity matrix, both in $L^\infty$ and in $L^2$-sense on $\Sigma^R$. Then
following similar analysis as in \cite{dkmvz2, kmvv}, for large enough $n$, we know the RH problem for $R$ is solvable.
Moreover, from (\ref{p-asy}), $R(z)$ can be expanded as follows
\begin{equation} \label{r-nasy}
R(z) = I +   \frac{1}{\sqrt{n}} R^{(-1)}(z) + \frac{1}{n} R^{(-2)}(z) + O(n^{-\frac{3}{2}}),
\end{equation}
as $n \to \infty$, uniformly for $z \in \mathbb{C} \setminus \Sigma^R$. To prove our Theorem \ref{thm1}, we need to derive
explicit asymptotic formulas for $ R^{(-1)}(z)$ and $R^{(-2)}(z)$ as $z \to \infty$.


\subsection{Asymptotic formulas for $ R^{(-1)}(z)$ and $R^{(-2)}(z)$}

To derive asymptotic formulas for $ R^{(-1)}(z)$ and $R^{(-2)}(z)$ as $z \to \infty$, more information about $\Psi_{-1}(s)$ and
$\Psi_{-2}(s)$ in (\ref{psi-asy}) is required. Using Proposition 1.1 in \cite[p.51]{fikn}, one can derive the explicit formulas
for them as follows:
\begin{equation} \label{psi-1}
\Psi_{-1}(s) = \left( \begin{matrix} -H & -\frac{y}{2} \\ \frac{1}{y}(K - \Theta - \Theta_\infty) & H \end{matrix} \right),
\end{equation}
\begin{equation} \label{psi-2}
\Psi_{-2}(s) = \left( \begin{matrix} (\Psi_{-2}(s))_{11}  & (\Psi_{-2}(s))_{12} \\
(\Psi_{-2}(s))_{21} & (\Psi_{-2}(s))_{22}
\end{matrix} \right),
\end{equation}
where
\begin{eqnarray}
(\Psi_{-2}(s))_{11} & = & \frac{1}{2} \Big[H^2 + s \, H - \frac{K}{2} - \frac{1}{2} (\Theta - \Theta_\infty -1) (\Theta +
\Theta_\infty) \Big], \label{psi-2-1} \\
(\Psi_{-2}(s))_{12} & = & \frac{y}{2} \left(\frac{u}{2} + s - H \right), \\
(\Psi_{-2}(s))_{21} & = & \frac{K}{u \, y}(K - 2 \Theta) - \frac{s+H}{y} (K - \Theta - \Theta_\infty), \\
(\Psi_{-2}(s))_{22} & = & \frac{1}{2} \Big[H^2 - s \, H - \frac{K}{2} + \frac{1}{2} (\Theta - \Theta_\infty + 1) (\Theta +
\Theta_\infty) \Big], \label{psi-2-4}
\end{eqnarray}
$y$ and $K$ are given in (\ref{y-def}) and (\ref{k-def}), respectively, and
\begin{equation} \label{h-def}
H(s) = H := \frac{K}{u} (K - 2 \Theta) - \left( \frac{u}{2} + s \right)(K - \Theta -\Theta_\infty).
\end{equation}
Then, we obtain the following expansion for $R^{(-1)}(z)$ as $z \to \infty$.

\begin{lem} \label{r-1-lemma}
We have that
\begin{equation} \label{r-1-nasy}
R^{(-1)}(z) = \frac{M}{z-1} = \frac{M}{z} + \frac{M}{z^2} + O(z^{-3}) \qquad \hbox{as } z \to \infty,
\end{equation}
where
\begin{equation} \label{p-1-residue}
M:= \Res\limits_{z=1} P^{(-1)}(z) = \sqrt{2} \left( \begin{matrix}  H(L) - \nu \, L & \frac{1}{y(L)}(K(L) - \nu) \rho^{-2} \\
-\frac{1}{2} \, y(L) \rho^2 & -H(L) + \nu \, L
\end{matrix} \right)
\end{equation}
and
\begin{equation} \label{rho}
\rho = 2^{-(\nu + b)/2} \, (\nu e^{-1})^{\nu/2} \ e^{\frac{1}{2} (\nu - b) \pi i}.
\end{equation}

\end{lem}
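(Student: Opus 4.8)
The plan is to obtain the lemma as a consequence of the small-norm analysis of the RH problem for $R$ (the same analysis that produces the expansion (\ref{r-nasy})), combined with a single residue computation at $z=1$. The point is that $R^{(-1)}$ is not merely $O(1/z)$: it is \emph{exactly} the rational function $M/(z-1)$, so that the expansion in (\ref{r-1-nasy}) is just the geometric series $\frac{1}{z-1}=\frac1z+\frac1{z^2}+O(z^{-3})$.

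First I would identify the additive RH problem satisfied by $R^{(-1)}$. Inserting $R=I+\frac{1}{\sqrt n}R^{(-1)}+\frac1n R^{(-2)}+O(n^{-3/2})$ and the expansion (\ref{p-asy}) of $J_R$ on $\partial B_\delta$ into $R_+=R_-J_R$, and using Proposition \ref{b-delta} to see that on $\Sigma^R\setminus\partial B_\delta$ the jump matrix is $I+O(e^{-cn})$ for some $c>0$, one finds that $R^{(-1)}$ is analytic on $\mathbb C\setminus\partial B_\delta$, tends to $0$ as $z\to\infty$ (since $R(z)=I+O(1/z)$), and satisfies $R^{(-1)}_+-R^{(-1)}_-=P^{(-1)}$ on $\partial B_\delta$. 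Consequently
\[ R^{(-1)}(z)=\frac{1}{2\pi i}\oint_{\partial B_\delta}\frac{P^{(-1)}(s)}{s-z}\,ds, \]
with $\partial B_\delta$ oriented as in the RH problem for $R$. Next I would check that $P^{(-1)}$, given by (\ref{p-1}), is meromorphic in $B_\delta$ with a single simple pole at $z=1$: indeed $E(z)^{\pm1}$ is analytic and invertible in $B_\delta$ (because $f(z)/(z-1)=\frac{1}{\sqrt2}+O(z-1)$ is analytic and nonvanishing there), the map $z\mapsto s=L\frac{z-1}{\sqrt2\,f(z)}$ is analytic at $z=1$ with $s(1)=L$, and $L$ is not a pole of $u$, so $\Psi_{-1}(s(z))$ is analytic near $z=1$; the only singularity is the simple pole of $1/f(z)$ in the first term of (\ref{p-1}) and the evident simple pole of the $\sigma_3$ term. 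Evaluating the Cauchy integral by residues (the orientation of $\partial B_\delta$ fixing the sign) gives $R^{(-1)}(z)=M/(z-1)$ for $z$ outside $\overline B_\delta$, with $M=\Res_{z=1}P^{(-1)}(z)$; expanding $\frac{1}{z-1}=\frac1z+\frac1{z^2}+O(z^{-3})$ then yields (\ref{r-1-nasy}).

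It remains to compute $M$. From $f(z)=\frac{1}{\sqrt2}(z-1)+O((z-1)^2)$ we have $\frac{1}{f(z)}=\frac{\sqrt2}{z-1}+O(1)$, so the first term of (\ref{p-1}) contributes $\sqrt2\,E(1)^{-1}\sigma_1\Psi_{-1}(L)\sigma_1 E(1)$ to the residue, while $-\frac{\nu L(z+1)}{\sqrt2\,(z-1)}\sigma_3$ contributes $-\sqrt2\,\nu L\,\sigma_3$. Using $f(z)/(z-1)\to2^{-1/2}$ as $z\to1$, a direct evaluation of (\ref{e-def}) gives $E(1)=\rho^{\sigma_3}$ with $\rho$ as in (\ref{rho}). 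Plugging in the explicit form (\ref{psi-1}) of $\Psi_{-1}$ with $\Theta=-b$, $\Theta_\infty=\nu+b$ (so that $\Theta+\Theta_\infty=\nu$), and carrying out the conjugation by $\sigma_1$ (which interchanges the diagonal entries and interchanges the off-diagonal entries) and by $\rho^{\sigma_3}$ (which multiplies the $(1,2)$ entry by $\rho^{-2}$ and the $(2,1)$ entry by $\rho^{2}$), and finally adding the $-\sqrt2\,\nu L\,\sigma_3$ contribution, produces exactly the matrix in (\ref{p-1-residue}).

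The one place where genuine care is needed is the bookkeeping in this last step: keeping the correct branch of $f(z)/(z-1)$ in the definition of $E$, getting the sign in $R^{(-1)}=+M/(z-1)$ right from the orientation of $\partial B_\delta$, and verifying that the formula (\ref{p-1}), which in the proof of Lemma \ref{lem-jr} is derived separately on $B_\delta\cap\Omega_0^S$ and on $B_\delta\cap\Omega_\infty^S$, really patches into one function analytic on $B_\delta\setminus\{1\}$ with only a simple pole, so that the Cauchy integral reduces to a single residue. Everything else is elementary $2\times2$ matrix algebra.
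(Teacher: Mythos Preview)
Your proposal is correct and follows essentially the same route as the paper: set up the additive jump problem for $R^{(-1)}$ on $\partial B_\delta$, observe that $P^{(-1)}$ has only a simple pole at $z=1$ in $B_\delta$, solve to get $R^{(-1)}(z)=M/(z-1)$ outside $\overline{B}_\delta$, and then compute $M$ from (\ref{p-1}), (\ref{psi-1}), and $E(1)=\rho^{\sigma_3}$. The paper writes down the piecewise explicit solution (\ref{r-1-explicit}) directly rather than passing through the Cauchy integral, but this is the same argument; your residue computation and the bookkeeping remarks are in fact more detailed than what the paper records.
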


\begin{proof}
From the RH problem for $R$ and the expansions in (\ref{p-asy}) and (\ref{r-nasy}), we know that $R^{(-1)}(z)$ satisfies a RH
problem as follows:
\begin{enumerate}

\item[(a)] $R^{(-1)}(z)$ is analytic for $z \in  \mathbb{C} \setminus \partial B_\delta$;

\item[(b)] $R^{(-1)}_+(z) = R^{(-1)}_-(z) + P^{(-1)}(z) $ for $z \in \partial B_\delta$;

\item[(c)] $R^{(-1)}(z) = O(1/z)$ as $z \to \infty$.

\end{enumerate}
Here $\partial B_\delta$ is clockwise oriented. From (\ref{p-1}) and the property of $f(z)$ in (\ref{f-expansion}), we know that
$P^{(-1)}(z)$ is analytic in $B_\delta$ except for a simple pole at 1. Thus, $R^{(-1)}(z)$ is explicitly given by:
\begin{equation} \label{r-1-explicit}
R^{(-1)}(z) = \begin{cases} \displaystyle \frac{1}{z-1} \Res\limits_{z=1} P^{(-1)}(z),
\hspace{80pt} z \in \mathbb{C} \setminus \overline{B}_\delta, \\
\displaystyle \frac{1}{z-1} \Res\limits_{z=1} P^{(-1)}(z) -  P^{(-1)}(z), \qquad z \in B_\delta.
\end{cases}
\end{equation}
This formula immediately gives us (\ref{r-1-nasy}). From (\ref{p-1}), (\ref{psi-1}) and the definition of $E(z)$ in
(\ref{e-def}), we get (\ref{p-1-residue}) with $\rho$ given by (\ref{rho}).
\end{proof}

Similarly, we get the expansion for $R^{(-2)}(z)$ as $z \to \infty$.
\begin{lem}

We have that
\begin{equation} \label{r-2-nasy}
R^{(-2)}(z) = \frac{B_{-2}}{(z-1)^2} + \frac{B_{-1}}{z-1} = \frac{B_{-1}}{z} + \frac{B_{-1} + B_{-2}}{z^2} + O(z^{-3}) \qquad
\hbox{as } z \to \infty,
\end{equation}
where $B_{-1}$ is a constant matrix and $B_{-2}$ is given by
\begin{equation} \label{b-2}
2 \left( \begin{matrix} (\Psi_{-2}(L))_{22} - \nu L H(L) +  \D\frac{ \nu ^2({L}^2 + 1)}{2} & \Big( (\Psi_{-2}(L))_{21} +
\frac{\nu L}{y(L)} (K(L) - \nu ) \Big) \rho^{-2} \\ \Big( (\Psi_{-2}(L))_{12} + \frac{\nu L}{2} y(L) \Big) \rho^2 &
(\Psi_{-2}(L))_{11} - \nu L H(L) + \D\frac{\nu^2 ({L}^2 -1)}{2}
\end{matrix} \right)
\end{equation}
with $\rho$ given by (\ref{rho}) and $(\Psi_{-2}(s))_{ij}$ given in (\ref{psi-2-1})--(\ref{psi-2-4}).

\end{lem}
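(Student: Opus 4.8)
The proof mirrors that of Lemma~\ref{r-1-lemma}, carried one order further in the expansion \eqref{r-nasy}.

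First I would substitute \eqref{p-asy} and \eqref{r-nasy} into the jump relation $R_+ = R_- J_R$ and collect the coefficient of $n^{-1}$. Away from $\partial B_\delta$ the jump matrices are exponentially close to $I$, so $R^{(-2)}$ has no jump there; on $\partial B_\delta$ one obtains the additive jump
\begin{equation*}
R^{(-2)}_+(z) - R^{(-2)}_-(z) = R^{(-1)}_-(z)\,P^{(-1)}(z) + P^{(-2)}(z) =: G(z),
\end{equation*}
together with $R^{(-2)}(z) = O(1/z)$ as $z\to\infty$. From \eqref{r-1-explicit}, the boundary value $R^{(-1)}_-$ on $\partial B_\delta$ is the restriction of the function $\frac{M}{z-1} - P^{(-1)}(z)$, which, like $P^{(-1)}$ and $P^{(-2)}$, is meromorphic in $B_\delta$ with its only pole at $z=1$. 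Hence $G$ extends meromorphically to $B_\delta$ with a single pole at $z=1$, and exactly as in Lemma~\ref{r-1-lemma} (a Liouville argument, or Plemelj on $\partial B_\delta$), $R^{(-2)}(z)$ equals the principal part of $G$ at $z=1$ for $z$ outside $\overline{B}_\delta$. Writing that principal part as $\frac{B_{-2}}{(z-1)^2} + \frac{B_{-1}}{z-1}$ and using $\frac{1}{z-1} = \frac1z + \frac1{z^2} + O(z^{-3})$ and $\frac{1}{(z-1)^2} = \frac1{z^2} + O(z^{-3})$ as $z\to\infty$ yields the form \eqref{r-2-nasy}; here $B_{-1}$ is the ($b$-, $\nu$-, $L$-dependent) constant matrix given by the $(z-1)^{-1}$ coefficient of $G$, which I would simply leave implicit since Theorem~\ref{thm1} does not need it.

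It remains to identify $B_{-2} = \lim_{z\to1}(z-1)^2 G(z)$. From \eqref{p-1} and \eqref{f-expansion}, $P^{(-1)}(z) = \frac{M}{z-1} + P^{(-1)}_0 + O(z-1)$ near $z=1$, and a short computation shows that the $(z-1)^{-2}$ parts of $\frac{M}{z-1}P^{(-1)}(z)$ and of $-P^{(-1)}(z)^2$ cancel in $G$; hence $R^{(-1)}_-P^{(-1)}$ contributes nothing to the double pole and
\begin{equation*}
B_{-2} = \lim_{z\to1}(z-1)^2 P^{(-2)}(z).
\end{equation*}
Evaluating this from \eqref{p-2} uses $f(z)^2 = \tfrac12(z-1)^2(1+O(z-1))$ and $(z-1)f(z) = \tfrac1{\sqrt2}(z-1)^2(1+O(z-1))$ from \eqref{f-expansion}, the limits $s = L\frac{z-1}{\sqrt2 f(z)} \to L$ and $E(z) \to E(1) = \rho^{\sigma_3}$ with $\rho$ as in \eqref{rho} (read off from \eqref{e-def}), and $(z+1) \to 2$. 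The four terms of \eqref{p-2} then contribute, to the $(z-1)^{-2}$ coefficient, respectively $2\,\rho^{-\sigma_3}\sigma_1\Psi_{-2}(L)\sigma_1\rho^{\sigma_3}$, $\nu^2 L^2 I$, $\nu^2\sigma_3$, and $-2\nu L\,\bigl(\rho^{-\sigma_3}\sigma_1\Psi_{-1}(L)\sigma_1\rho^{\sigma_3}\bigr)\sigma_3$. Carrying out the conjugations $\sigma_1(\cdot)\sigma_1$ (which interchanges the $(1,1)$ and $(2,2)$ entries and the $(1,2)$ and $(2,1)$ entries) and $\rho^{-\sigma_3}(\cdot)\rho^{\sigma_3}$ (which multiplies the $(1,2)$ and $(2,1)$ entries by $\rho^{-2}$ and $\rho^{2}$) on the explicit matrices \eqref{psi-1}--\eqref{psi-2-4}, and using $\Theta=-b$, $\Theta_\infty=\nu+b$ from \eqref{theta-def} so that $K-\Theta-\Theta_\infty = K-\nu$, adding the four contributions gives exactly \eqref{b-2}.

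The main obstacle is bookkeeping rather than conceptual: one must verify the cancellation that removes $R^{(-1)}_-P^{(-1)}$ from the double pole (so that $B_{-2}$ is controlled entirely by $P^{(-2)}$), and then execute the $\sigma_1$- and $\rho^{\sigma_3}$-conjugations together with the specialization of $\Theta,\Theta_\infty$ without sign or index slips; the solvability of the RH problem for $R$ and the legitimacy of the expansion \eqref{r-nasy} are the same small-norm facts already used for $R$ and for $R^{(-1)}$.
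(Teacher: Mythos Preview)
Your argument is correct and follows the same route as the paper: derive the additive jump for $R^{(-2)}$ on $\partial B_\delta$, extract the principal part at $z=1$, and read off $B_{-2}$ from \eqref{p-2} via the limits $f(z)^2\sim\tfrac12(z-1)^2$, $E(1)=\rho^{\sigma_3}$, etc. You are actually more explicit than the paper in pointing out that $R^{(-1)}_-P^{(-1)}$ has only a simple pole at $z=1$ (since $R^{(-1)}_-=\frac{M}{z-1}-P^{(-1)}$ is analytic there), so that $B_{-2}$ comes entirely from $P^{(-2)}$; the paper leaves this implicit. Your ordering $R^{(-1)}_-P^{(-1)}$ (rather than the paper's $P^{(-1)}R^{(-1)}_-$) is the one dictated by expanding $R_+=R_-J_R$, but both give the same $B_{-2}$.
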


\begin{proof}

As in the proof of Lemma \ref{r-1-lemma}, we have a RH problem for $R^{(-2)}(z)$ from (\ref{p-asy}) and (\ref{r-nasy}):

\begin{enumerate}

\item[(a)] $R^{(-2)}(z)$ is analytic for $ z \in \mathbb{C} \setminus
\partial B_\delta$;

\item[(b)] $R^{(-2)}_+(z) = R^{(-2)}_-(z) + P^{(-2)}(z) +
P^{(-1)}(z) (R^{(-1)})_-(z) $ for $z \in \partial B_\delta$;

\item[(c)] $R^{(-2)}(z) = O(1/z)$ as $z \to \infty$.

\end{enumerate}
From (\ref{f-expansion}) and (\ref{p-2}), we see that $P^{(-2)}(z)$ is analytic in $B_\delta$ except for a double pole at 1. If
we rewrite $P^{(-2)}(z) + P^{(-1)}(z) (R^{(-1)})_-(z)$ as
\begin{equation} \label{p-2-rewrite}
P^{(-2)}(z) + P^{(-1)}(z) (R^{(-1)})_-(z) = \frac{B_{-2}}{(z-1)^2} + \frac{B_{-1}}{z-1} + W(z),
\end{equation}
where $B_{-1}$ and $B_{-2}$ are two constant matrices, $W(z)$ is an analytic function in $B_\delta$. Then, $R^{(-2)}(z)$ can be
given as follows
\begin{equation}
R^{(-2)}(z) = \begin{cases} \displaystyle \frac{B_{-2}}{(z-1)^2} + \frac{B_{-1}}{z-1},
\qquad z \in \mathbb{C} \setminus \overline{B}_\delta, \\
-  W(z), \hspace{73pt} z \in B_\delta.
\end{cases}
\end{equation}
Thus, as $z \to \infty$, we have
\begin{equation}
R^{(-2)}(z) = \left( \frac{1}{z^2} + O (z^{-3}) \right) B_{-2} + \left( \frac{1}{z} + \frac{1}{z^2} + O (z^{-3}) \right) B_{-1},
\end{equation}
which gives us (\ref{r-2-nasy}). From  (\ref{p-2}), (\ref{psi-2}), (\ref{p-2-rewrite}) and the definition of $E(z)$ in
(\ref{e-def}), we have (\ref{b-2}).
\end{proof}


\subsection{Proof of Theorem \ref{thm1} in Case I} \label{thm1-proof}

Then we are ready for the proof of Theorem \ref{thm1}.
\begin{proof}

Reversing the transformations $Y \mapsto U \mapsto T \mapsto S \mapsto R $, we can recover $Y$ from $R$. Since the RH problem
for $R$ is solvable for large enough $n$, it follows that the RH problem $Y$ has a solution for large enough $n$. This implies
that the corresponding monic orthogonal polynomial $\pi_n(z)$ uniquely exists for large enough $n$.

Next, we are going to calculate the asymptotic formulas for the recurrence coefficients $a_n$ and $b_n$. From the definition of
$g_{n}$ in (\ref{gn}), we have
\begin{equation*}
\begin{split}
g_{n}(z) & = \int \log(z-s) d \mu_{n}(s) \\
& = \log z - \frac{1}{z} \int s \ d \mu_{n}(s) - \frac{1}{2 z^2} \int s^2 \ d \mu_{n}(s) + O (z^{-3})
\end{split}
\end{equation*}
and so by (\ref{gt})
\[ g_{t,n}(z) =  g_n(z) + (t-1) g_n^0(z) =
    \log z - d_1 z^{-1} - d_2 z^{-2} + O(z^{-3}) \qquad \textrm{as } z \to \infty,
    \]
 for certain constants $d_1$ and $d_2$ (that depend on $n$ and $t$).
 Then, from (\ref{t-y}), (\ref{t-s3}) and (\ref{r-s}), we get
\begin{align}
R(z) & = \Big( (-1)^n (e^{- i \pi \nu} - e^{i \pi \nu}) n^{-b} \Big)^{-\frac{1}{2} \sigma_3} e^{-\frac{1}{2} n l_{t,n} \sigma_3}
\left( I + \frac{Y_{-1}}{z} + \frac{Y_{-2}}{z^2} + O(z^{-3}) \right) \nonumber \\
& \quad \times \left( I + \frac{d_1 n}{z} \sigma_3 + \frac{d_1^2 n^2}{2 z^2} I + \frac{d_2 n}{z^2} \sigma_3 + O(z^{-3}) \right)
e^{\frac{1}{2} n l_{t,n} \sigma_3} \Big( (-1)^n (e^{- i \pi \nu} -
e^{i \pi \nu}) n^{-b} \Big)^{\frac{1}{2} \sigma_3} \nonumber \\
& = I + \frac{R_{-1}}{z} + \frac{R_{-2}}{z^2} + O(z^{-3}) \label{r-z-exp}
\end{align}
with
\begin{eqnarray}
R_{-1} & = & c^{-\sigma_3} \Big(  Y_{-1} + d_1 n \sigma_3 \Big) c^{\sigma_3}, \\
R_{-2} & = & c^{-\sigma_3} \Big( Y_{-2} + d_1 n \, Y_{-1} \sigma_3 + \frac{d_1^2 n^2}{2} I + d_2 n \sigma_3 \Big) c^{\sigma_3},
\end{eqnarray}
where $c = \Big( (-1)^n (e^{- i \pi \nu} - e^{i \pi \nu}) n^{-b} \Big)^{\frac{1}{2}} e^{\frac{1}{2} n l_{t,n}}$. Recalling
(\ref{an-eqn}) and (\ref{bn-eqn}), it immediately follows that
\begin{equation} \label{an-r}
a_n = \Big( Y_{-1} \Big)_{12} \Big( Y_{-1} \Big)_{21}  = \Big( R_{-1} \Big)_{12} \Big( R_{-1} \Big)_{21}
\end{equation}
and
\begin{eqnarray}
b_n & = & \frac{\Big( Y_{-2} \Big)_{12}}{ \Big( Y_{-1} \Big)_{12} } - \Big( Y_{-1} \Big)_{22} = \frac{\Big( R_{-2} \Big)_{12}
c^2 + d_1 n \, \Big( Y_{-1} \Big)_{12}}{ \Big( R_{-1} \Big)_{12} c^2} -
\left[ \Big( R_{-1} \Big)_{22} + d_1 n \right] \nonumber \\
& = & \frac{\Big( R_{-2} \Big)_{12} c^2 + d_1 n \, \Big( R_{-1} \Big)_{12} c^2}{ \Big( R_{-1} \Big)_{12} c^2} -
\left[ \Big( R_{-1} \Big)_{22} + d_1 n \right] \nonumber \\
& = & \frac{\Big( R_{-2} \Big)_{12}}{ \Big( R_{-1} \Big)_{12}} - \Big( R_{-1} \Big)_{22}. \label{bn-r}
\end{eqnarray}
From (\ref{r-nasy}), (\ref{r-1-nasy}) and (\ref{r-2-nasy}), we know  that
\begin{align}
R(z) & =  I + \frac{1}{\sqrt{n}} \left( \frac{1}{z} + \frac{1}{z^2} + O(z^{-3}) \right) \, M
\nonumber \\
& \quad + \frac{1}{n} \left( \frac{B_{-1}}{z} + \frac{B_{-1} + B_{-2}}{z^2} + O(z^{-3}) \right) + O(n^{-\frac{3}{2}}).
\label{r-z-exp2}
\end{align}
Combining (\ref{r-z-exp}) and (\ref{r-z-exp2}), it follows that
\begin{equation} \label{r-1}
R_{-1} = \frac{1}{\sqrt{n}} M + \frac{1}{n} \, B_{-1} + O(n^{-\frac{3}{2}})
\end{equation}
and
\begin{equation} \label{r-2}
R_{-2} = \frac{1}{\sqrt{n}} M + \frac{1}{n} \, \Big( B_{-1} + B_{-2} \Big) + O(n^{-\frac{3}{2}}).
\end{equation}
Thus, from (\ref{an-r}), we get
\begin{equation}
a_n = \frac{1}{n} \Big( M \Big)_{12} \; \Big( M \Big)_{21} + O(n^{-\frac{3}{2}}).
\end{equation}
From (\ref{p-1-residue}) and the assumption that $L$ is not a pole of $u(s)$ and $K(L) \neq \nu$ in Theorem \ref{thm1},  we have
$(M)_{12} = \frac{\sqrt{2}}{y(L)}(K(L) - \nu) \rho^{-2} \neq 0$. Then, from (\ref{bn-r}), we have
\begin{align}
b_n & = \frac{ \Big( M \Big)_{12} + \frac{1}{\sqrt{n}} \, \Big( ( B_{-1} )_{12} + ( B_{-2} )_{12} \Big) + O(n^{-1})}{ \Big( M
\Big)_{12} + \frac{1}{\sqrt{n}} \, ( B_{-1} )_{12} + O(n^{-1})} - \frac{1}{\sqrt{n}} \Big( M \Big)_{22} + O
( n^{-1} ) \nonumber \\
& = \left[ 1 + \frac{1}{\sqrt{n}} \,  \frac{( B_{-1} )_{12}}{\Big( M \Big)_{12}} + \frac{1}{\sqrt{n}} \,  \frac{( B_{-2}
)_{12}}{\Big( M \Big)_{12}} + O(n^{-1}) \right] \left[ 1 - \frac{1}{\sqrt{n}} \,  \frac{( B_{-1} )_{12}}{\Big( M \Big)_{12}} +
O(n^{-1}) \right] \nonumber \\
& \quad - \frac{1}{\sqrt{n}} \Big( M \Big)_{22} + O ( n^{-1} ) \nonumber \\
& = 1 + \frac{1}{\sqrt{n}} \,  \frac{( B_{-2} )_{12}}{( M )_{12}} - \frac{1}{\sqrt{n}} \Big( M \Big)_{22} + O ( n^{-1} ).
\end{align}
With (\ref{p-1-residue}) and (\ref{b-2}), the above formulas give us (\ref{an-final}) and (\ref{bn-final}).

This completes the proof of Theorem \ref{thm1} for Case I.
\end{proof}


\section{Case II: $\nu < 0$} \label{case2}

\subsection{Introduction}

Since $\nu < 0$, with the definition of $A_n$ in (\ref{a-def-2}) we know $A_n > 1$. Following the RH analysis in \cite{km1},
define
\begin{equation}
\beta_n := 2 - A_n + 2 i \sqrt{A_n -1}
\end{equation}
and
\begin{equation}
R_n(z):= \sqrt{(z- \beta_n)( z - \overline{\beta}_n)}, \qquad z \in \mathbb{C} \setminus \Gamma_{0,n},
\end{equation}
where $R_n(z)$ behaves like $z$ as $z \to \infty$.  Recalling the value of $A_n$ in (\ref{a-def-2}), $\beta_n$ can be rewritten
as
\begin{equation} \label{beta3-def}
\beta_n = 1 + \frac{\nu}{n} + 2i \sqrt{\frac{-\nu}{n}}.
\end{equation}
As in \cite{km1}, define
\begin{equation} \label{phi-21}
\phi_{n}(z) := \frac{1}{2} \int_{\beta_n}^z \frac{R_n(s)}{s} ds, \qquad z \in \mathbb{C} \setminus ( \Gamma_{0,n} \cup
\Gamma_{1,n} \cup [0, \infty) ),
\end{equation}
where the path of integration from $\beta_{n}$ to $z$ lies entirely in the region $\mathbb{C} \setminus ( \Gamma_{0,n} \cup
\Gamma_{1,n} \cup [0, \infty) )$, except for the initial point $\beta_{n}$. Similarly define
\begin{equation} \label{phi-22}
\tilde\phi_n(z) := \frac{1}{2} \int_{\bar\beta_n}^z \frac{R_n(s)}{s} ds, \qquad z \in \mathbb{C} \setminus (\Gamma_{0,n} \cup
\Gamma_{2,n} \cup [0, \infty)),
\end{equation}
where the path of integration from $\bar\beta_{n}$ to $z$ lies entirely in the region $\mathbb{C} \setminus ( \Gamma_{0,n} \cup
\Gamma_{2,n} \cup [0, \infty) )$, except for the initial point $\bar\beta_{n}$. It is not difficult to verify (also see
\cite{km1})
\begin{equation} \label{phi-phi2}
\phi_n(z) = \tilde\phi_n(z) \mp \pi i \qquad \textrm{for } z \in \Omega^{\pm}.
\end{equation}

To clarify contours $\Gamma_{0,n}$, $\Gamma_{1,n}$ and $\Gamma_{2,n}$ in (\ref{phi-21}) and (\ref{phi-22}), we consider
trajectories of the quadratic differential
\begin{equation}
- \frac{R_n(s)^2}{s^2} ds^2  = - \frac{(s-\beta_{n})(s- \bar\beta_{n})}{s^2} ds^2 ,
\end{equation}
which has two simple zeros at $\beta_{n}$ and $\bar\beta_{n}$ and a double pole at 0; see Strebel \cite{strebel}. In \cite{km1}
it is shown that there exist curves $\Gamma_{0,n}$, $\Gamma_{1,n}$ and $\Gamma_{2,n}$ as follows:
\begin{defn} \label{gamma01}

The contour $\Gamma_{0,n}$ is a curve from $\bar\beta_{n}$  to $\beta_{n}$ which crosses the negative real axis, so that
\begin{equation} \label{phi-21-prop}
\re \phi_{n,\pm}(z) = 0 \qquad \textrm{for} \quad z \in \Gamma_{0,n}.
\end{equation}
$\Gamma_{1,n}$ and $\Gamma_{2,n}$ are curves that form the analytic continuation of $\Gamma_{0,n}$ such that $\tilde\phi_n(z)$
and $\phi_n(z)$ are real and positive on them, respectively; see Figure~\ref{contoury}.

\end{defn}
\begin{figure}[h]
\centering
\includegraphics[width=200pt]{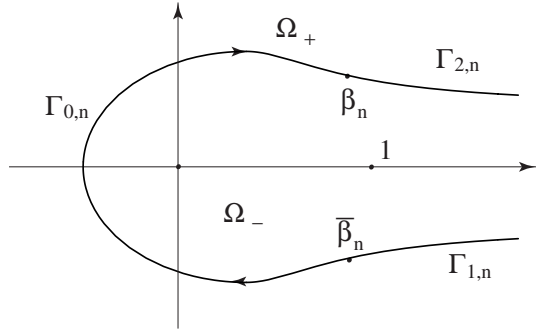}
\caption{Curves $\Gamma_{0,n}$, $\Gamma_{1,n}$ and $\Gamma_{2,n}$} \label{contoury}
\end{figure}

As in \cite{km1}, in this case we choose the curve $\Sigma$ in the RH problem for $Y$ to be $\Sigma = \Gamma_{0,n} \cup
\Gamma_{1,n} \cup \Gamma_{2,n}$. And we do not need a step $Y \mapsto U$ as in the Case I.

Following the similar analysis as in the proof of Lemma \ref{gamma0-szego}, we have:

\begin{lem}
As $n \to \infty$, the curve $\Gamma_{0,n}$ tends to the Szeg\H{o} curve $\mathcal{S}$ (\ref{szego-curve}).
\end{lem}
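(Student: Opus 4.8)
The plan is to repeat, almost word for word, the argument used in the proof of Lemma~\ref{gamma0-szego}, the only difference being that the two real branch points $\beta_{1,n},\beta_{2,n}$ are now replaced by the complex conjugate pair $\beta_n,\bar\beta_n$. First I would note from (\ref{beta3-def}) that
\[ \beta_n = 1 + \frac{\nu}{n} + 2i\sqrt{\frac{-\nu}{n}} \longrightarrow 1, \qquad \bar\beta_n \longrightarrow 1 \qquad (n\to\infty), \]
so that $R_n(z)=\sqrt{(z-\beta_n)(z-\bar\beta_n)}\to z-1$ as $n\to\infty$, uniformly on compact subsets of $\mathbb{C}\setminus\{1\}$, with the branch still fixed by $R_n(z)\sim z$ at infinity. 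Consequently, exactly as in (\ref{phi-1inf}), from (\ref{phi-21}) we obtain
\[ \phi_n(z)\ \longrightarrow\ \frac{1}{2}\int_1^z\frac{s-1}{s}\,ds \;=\; \frac{1}{2}\bigl(z-1-\log z\bigr) \qquad (n\to\infty), \]
uniformly for $z$ bounded away from $0$ and $\infty$; by (\ref{phi-phi2}) the same limit holds for $\tilde\phi_n(z)$ up to the additive constant $\mp\pi i$, which does not affect real parts.

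Next I would invoke Definition~\ref{gamma01}: $\Gamma_{0,n}$ is the arc from $\bar\beta_n$ to $\beta_n$ crossing the negative real axis on which $\re\phi_{n,\pm}=0$. Since $\bar\beta_n,\beta_n\to1$ and the defining equation $\re\phi_{n}=0$ tends to $\re\bigl(\tfrac12(z-1-\log z)\bigr)=0$, the family $\Gamma_{0,n}$ tends to a simple closed curve $\Gamma_{0,\infty}$ through $z=1$ that encircles $0$ once and satisfies $\re\bigl(z-1-\log z\bigr)=0$ on it. Using the identity $\re\bigl(z-1-\log z\bigr)=\re z-1-\log|z|=-\log\bigl|z\,e^{1-z}\bigr|$ and the fact that the relevant component is the one contained in $\{|z|\le1\}$, a comparison with (\ref{szego-curve}) identifies $\Gamma_{0,\infty}$ with the Szeg\H{o} curve $\mathcal{S}$. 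To make the convergence precise (in the Hausdorff sense on a fixed neighbourhood of $\mathcal{S}$), one argues as in \cite{km1}: away from $z=1$ one has $\frac{d}{dz}\bigl(\tfrac12(z-1-\log z)\bigr)=\frac{z-1}{2z}\neq0$, so the zero level set of $\re\phi_n$ is a smooth curve depending continuously (via the implicit function theorem) on the small parameter $n^{-1/2}$ entering $\beta_n$, and one treats a neighbourhood of $z=1$ separately.

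The one genuinely delicate point, again as in Lemma~\ref{gamma0-szego}, is precisely that behaviour near the coalescing branch points $\beta_n,\bar\beta_n\to1$: there the convergence $R_n(z)\to z-1$ fails to be uniform, so the identification of the limiting curve right at $z=1$ cannot be read off from the naive pointwise limit and needs the local analysis of \cite{km1}. I expect this to be the only obstacle; in any case, for all uses made of $\Gamma_{0,n}$ in the sequel (and for the statement of the theorems) only its behaviour on regions bounded away from $z=1$ matters, where the convergence is uniform and the argument above applies directly.
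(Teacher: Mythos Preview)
Your proposal is correct and follows exactly the approach the paper intends: the paper gives no separate proof of this lemma but simply refers back to Lemma~\ref{gamma0-szego}, and you reproduce that argument with $\beta_{1,n},\beta_{2,n}$ replaced by the complex conjugate pair $\beta_n,\bar\beta_n$. If anything, you are more careful than the paper---the extra remarks on Hausdorff convergence via the implicit function theorem and on the behaviour near the coalescing branch points go beyond what either the paper or the cited proof of Lemma~\ref{gamma0-szego} actually spells out.
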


As an analog of (\ref{dmu-n}) in the Case I, we define a measure $d\mu_n$ to be
\begin{equation}
d \mu_n(y) = \frac{1}{2 \pi i} \frac{R_{n,+}(y)}{y} dy, \qquad y \in \Gamma_{0,n}.
\end{equation}
Using (\ref{phi-21}) and (\ref{phi-21-prop}), one can verify that $d \mu_n(y)$ is a probability measure on $\Gamma_{0,n}$; see
also \cite{km1}.


\subsection{The $g$ and $\phi$ functions}

As in the Case I, define the $g$\,-function to be
\begin{equation}
g_n(z) := \int_{\Gamma_{0,n} } \log(z-s) d \mu_n(s), \qquad z \in \mathbb{C} \setminus ( \Gamma_{0,n} \cup \Gamma_{1,n} ),
\end{equation}
where the logarithm $\log(z-s)$ is defined with a cut along $\Gamma_{0,n} \cup \Gamma_{1,n}$. Introduce
\begin{equation}
    g_{t,n}(z) := g_n(z) + (t-1) g_n^0(z)
\end{equation}
and
\begin{equation} \label{phi-tz2}
\phi_{t,n}(z) := \phi_n(z) + (t-1) \phi_n^0(z), \qquad \tilde\phi_{t,n}(z) := \tilde\phi_n(z) + (t-1) \phi_n^0(z),
\end{equation}
where
\begin{equation}
    g_n^0(z) := \frac{1}{2} (z - R_n(z)) \quad \hbox{and} \quad
    \phi_n^0(z) := \frac{R_n(z)}{2}, \qquad z \in \mathbb{C} \setminus \Gamma_{0,n}.
\end{equation}
Note that $g_n^0(z) = O(1/z)$ as $z \to \infty$.

Using similar analysis as in Propositions 3.7 and 3.8 in \cite{km1}, we get the following properties for $g_{t,n}$ of
$\phi_{t,n}$, which is an analog of Proposition \ref{g-prop} in the Case I.
\begin{prop} \label{g2-prop}

\begin{enumerate}

\item[(a)] There exists a constant $l_{t,n}$ such that
\begin{equation}
g_{t,n}(z) = \frac{1}{2} \left( A_n\log z + t z + l_{t,n} \right) - \phi_{t,n}(z), \quad z \in \mathbb{C} \setminus (
\Gamma_{0,n} \cup \Gamma_{1,n} \cup [0 , \infty) ).
\end{equation}
Here $\log z$ is defined with a cut along $[0, \infty)$. And the constant $l_{t,n}$ is explicitly given by
$$l_{t,n} =
2g_{t,n}(\beta_n) - (A_n \log \beta_n + t \beta_n).$$

\item[(b)] For $z \in \Sigma$, we have
\begin{equation}
g_{t,n,+}(z)-g_{t,n,-}(z) = \begin{cases} 2 \pi i , & z \in \Gamma_{1,n} ,\\
-2\phi_{t,n,+}(z) = 2 \phi_{t,n,-}(z), &  z \in  \Gamma_{0,n}.
\end{cases}
\end{equation}

\item[(c)] We also have
\begin{equation}
g_{t,n,+}(z) + g_{t,n,-}(z) = \begin{cases} A_n \log{z}+ t z + l_{t,n} , & z \in \Gamma_{0,n}\text{,}\\
A_n \log{z} + t z + l_{t,n} -2\tilde\phi_{t,n}(z) \text{,}& z \in \Gamma_{1,n}, \\
A_n \log{z} + t z + l_{t,n} -2\phi_{t,n}(z)\text{,} & z \in \Gamma_{2,n} \text{.}
\end{cases}
\end{equation}

\end{enumerate}

\end{prop}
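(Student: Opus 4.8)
The plan is to reduce the proposition to one explicit formula for $g_n$ and then read off (a)--(c) by algebra and branch bookkeeping, following the scheme of Propositions~3.7 and~3.8 in~\cite{km1}. From the definitions one has at once $g_n^0(z)=\tfrac12(z-R_n(z))=\tfrac12 z-\phi_n^0(z)$, so $g_{t,n}(z)=g_n(z)+(t-1)\bigl(\tfrac12 z-\phi_n^0(z)\bigr)$; hence (a) follows once I prove $g_n(z)=\tfrac12(A_n\log z+z+l)-\phi_n(z)$ for a constant $l$, because then the $(t-1)$-terms promote $\tfrac12 z$ to $\tfrac t2 z$ and $\phi_n$ to $\phi_{t,n}$, and we put $l_{t,n}=l$.

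To get the formula for $g_n$ I would differentiate. As $g_n$ is the logarithmic potential of the probability measure $\mu_n$ on $\Gamma_{0,n}$, $g_n'(z)=\int_{\Gamma_{0,n}}\frac{d\mu_n(s)}{z-s}=\frac{1}{2\pi i}\int_{\Gamma_{0,n}}\frac{R_{n,+}(s)}{s(z-s)}\,ds$; I would evaluate this by collapsing a large circle onto $\Gamma_{0,n}$ together with small loops about $s=0$ and $s=z$, using $R_{n,+}=-R_{n,-}$ on $\Gamma_{0,n}$ and the expansion $\frac{R_n(s)}{s(z-s)}=-\tfrac1s+O(s^{-2})$ at $\infty$. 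The residues at $s=0$ and $s=z$ are $R_n(0)/z$ and $-R_n(z)/z$, and the essential arithmetic is $R_n(0)=|\beta_n|=A_n$: indeed $\beta_n\bar\beta_n=(1+\tfrac\nu n)^2-4\tfrac\nu n=(1-\tfrac\nu n)^2=A_n^2$, and the $+$ branch is forced since $R_n$ is real and nonvanishing on the positive real axis, which threads the gap between $\bar\beta_n$ and $\beta_n$ and so never meets the cut $\Gamma_{0,n}$. Fixing the overall sign by $g_n(z)=\log z+O(1/z)$ yields $g_n'(z)=\tfrac12(1+\tfrac{A_n}z)-\tfrac{R_n(z)}{2z}=\tfrac12(1+\tfrac{A_n}z)-\phi_n'(z)$; integrating gives (a) up to a constant, and evaluating at $z=\beta_n$ --- where $\phi_{t,n}(\beta_n)=\phi_n(\beta_n)+(t-1)\phi_n^0(\beta_n)=0$ because $\phi_n(\beta_n)=0$ and $R_n(\beta_n)=0$ --- pins down $l_{t,n}=2g_{t,n}(\beta_n)-(A_n\log\beta_n+t\beta_n)$.

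Parts (b) and (c) then follow by taking boundary values in the formula of (a), using: (i) across $\Gamma_{0,n}$, $\log z$ is analytic (its cut is $[0,\infty)$) while $R_n$, hence $\phi_n,\phi_n^0,\phi_{t,n}$, changes sign, and $\phi_{t,n,+}+\phi_{t,n,-}=0$ on $\Gamma_{0,n}$ (opposite boundary derivatives, common value $0$ at $\beta_n$), which gives the $\Gamma_{0,n}$ lines of (b) and (c); (ii) $\Gamma_{1,n}$ lies on the branch cut of $\log(z-s)$, $s\in\Gamma_{0,n}$, defining $g_n$, so $g_{n,+}-g_{n,-}=2\pi i\int d\mu_n=2\pi i$ there while $g_n^0$ (whose only cut is $\Gamma_{0,n}$) is analytic across $\Gamma_{1,n}$, giving the $\Gamma_{1,n}$ line of (b); (iii) $\tilde\phi_n$ is the branch analytic across $\Gamma_{1,n}$ and $\phi_n$ the one analytic across $\Gamma_{2,n}$, and by \eqref{phi-phi2} $\phi_n=\tilde\phi_n\mp\pi i$ on $\Omega^{\pm}$, so the $\mp\pi i$ cancels in $\phi_{t,n,+}+\phi_{t,n,-}=2\tilde\phi_{t,n}$ on $\Gamma_{1,n}$, giving the $\Gamma_{1,n}$ line of (c); the $\Gamma_{2,n}$ line of (c) is simply (a) doubled, as $g_{t,n},\phi_{t,n},\log z$ are all analytic across $\Gamma_{2,n}$.

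The work is entirely bookkeeping, and that is also the only real obstacle: one must track scrupulously which sheet of $\log z$, $\log(z-s)$, $R_n$, $\phi_n$, $\tilde\phi_n$ is in force on each of $\Gamma_{0,n},\Gamma_{1,n},\Gamma_{2,n}$, the orientations in the residue computation, and the fact that $\Gamma_{0,n}$ crosses the negative real axis so that $0$ and $\infty$ are connected only through the narrow gap near $z=1$. Pinning down the single sign $R_n(0)=+A_n$ is precisely what makes the coefficient of $\log z$ in (a) come out $+A_n$ rather than $-A_n$; once the branches are settled, the argument is the same as for Case~I (Proposition~\ref{g-prop}) and as in Propositions~3.7--3.8 of~\cite{km1}.
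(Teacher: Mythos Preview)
Your proposal is correct and follows exactly the route the paper points to: the paper gives no proof here but simply refers to Propositions~3.7 and~3.8 of \cite{km1}, and what you have written is precisely that argument adapted to the $t$-modified functions---differentiate $g_n$, evaluate the Cauchy integral by residues using $R_{n,+}=-R_{n,-}$ on $\Gamma_{0,n}$ and $R_n(0)=A_n$, integrate back, and then read off (b) and (c) from the boundary-value bookkeeping. Your check that $R_n(0)=+A_n$ (rather than $-A_n$) via $|\beta_n|^2=(1-\nu/n)^2$ and continuity of $R_n$ along the positive real axis is the one nontrivial sign, and you have it right.
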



\subsection{First and second transformations $Y \mapsto T$, $T\mapsto S$}

To normalize the RH problem at infinity, introduce the first transformation $Y \mapsto T$ to be
\begin{equation}
T(z) = e^{-\frac{1}{2} n l_{t,n} \sigma_3} Y(z) \, e^{ - n g_{t,n}(z)  \sigma_3} \, e^{\frac{1}{2} n l_{t,n} \sigma_3} .
\end{equation}
From the RH problem for $Y$ in Section \ref{rh-op} and Proposition \ref{g2-prop}, we obtain a RH problem for $T$ as follows.
\begin{enumerate}

\item[(a)] $T(z)$ is analytic for $z \in \mathbb{C} \setminus \Sigma,$ $\Sigma = \Gamma_{0,n} \cup \Gamma_{1,n} \cup \Gamma_{2,n}$;

\item[(b)]
\begin{align}
T_+(z) = \ & T_-(z) \left( \begin{matrix} e^{2 n \phi_{t,n,+}(z)} & (z-1)^{2b} \\ 0 & e^{2 n \phi_{t,n,-}(z)}
\end{matrix} \right) & \textrm{for } z \in \Gamma_{0,n}; \label{t2-jump0} \\
T_+(z) = \ & T_-(z) \left( \begin{matrix} 1 & e^{-2n \tilde\phi_{t,n}(z)} (z-1)^{2b} \\ 0 & 1
\end{matrix} \right) & \textrm{for } z \in \Gamma_{1,n}; \\
T_+(z) = \ & T_-(z) \left( \begin{matrix} 1 & e^{-2n \phi_{t,n}(z)} (z-1)^{2b} \\ 0 & 1
\end{matrix} \right) & \textrm{for } z \in \Gamma_{2,n};
\end{align}

\item[(c)] as $z \to \infty$
\begin{equation*}
T(z) = I + O \left( \frac{1}{z} \right), \qquad z \in \mathbb{C} \setminus \Sigma.
\end{equation*}

\end{enumerate}

From (\ref{phi-21-prop}) and (\ref{phi-tz2}), we know that, when $|t-1|$ is small, the boundary values of $\re \phi_{t,n}(z)$ on
both sides of $\Gamma_{0,n}$ is close to 0. This means the jump matrix for $T$ on $\Gamma_{0,n}$ in (\ref{t2-jump0}) has
oscillatory diagonal entries. Similarly as in the previous case, to remove this oscillating jump, we introduce the second
transformation and open the lens as follows. Let $\Gamma_{4,n}$ be a smooth curve connecting $\beta_n$, $\bar\beta_n$ and 1.
Note that since $\im \beta_n \neq \arg \beta_n$, we can not choose $\Gamma_{4,n}$ to be the curve $\im z = \arg z$. However, as
$\beta_{n} $ is close to 1 for $n$ large, we can choose $\Gamma_{4,n}$ such that it is close to the curve $\im z = \arg z$ for
$n$ large. Then choose $\Gamma_{3,n}$ be the continuation of $\Gamma_{4,n}$ such that $\Gamma_{3,n} \cup \Gamma_{4,n}$ becomes a
closed contour and contains $\Gamma_{0,n}$ in its interior. Then the curves $\Gamma_{j,n}, j=0,1, \cdots, 4,$ partition the
complex plane into domains $\Omega_\infty$ and $\Omega_{k}, k = 0,1, 2,3 $; see Figure~\ref{contours}.
\begin{figure}[h]
\centering
\includegraphics[width=220pt]{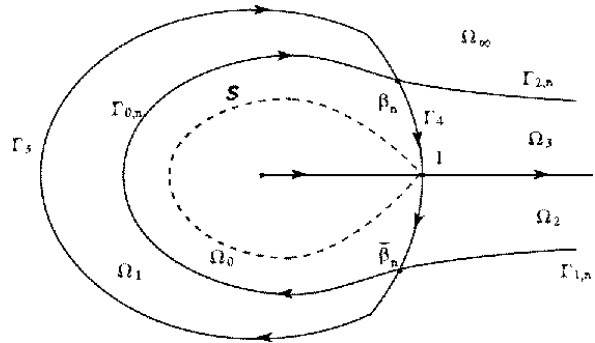}
\caption{The contour for the RH problem for $T$} \label{contours}
\end{figure}
Define
\begin{align}
S(z) & =  T(z) & \textrm{for } z \in \Omega_\infty  \\
S(z) & =  T(z) \left( \begin{matrix} 0 & (z-1)^{2b} \\ -  (z-1)^{-2b} & e^{2 n \phi_{t,n}(z)} \end{matrix} \right)
& \textrm{for } z \in \Omega_0; \\
S(z) & =  T(z) \left( \begin{matrix} 1 & 0 \\ -  (z-1)^{-2b} e^{2n \phi_{t,n}(z) } & 1 \end{matrix} \right) &
\textrm{for } z \in \Omega_1; \\
S(z) & =  T(z) \left( \begin{matrix} 1 & (z-1)^{2b} e^{-2n \tilde\phi_{t,n}(z) } \\ 0 & 1 \end{matrix} \right) & \textrm{for } z
\in \Omega_2; \\
 S(z) & =  T(z) \left( \begin{matrix} 1 & (z-1)^{2b} e^{-2n \phi_{t,n}(z) } \\ 0 & 1 \end{matrix} \right) &
\textrm{for } z \in \Omega_3.
\end{align}
Direct calculation shows that $S_+(z) = S_-(z)$ for $z \in \Gamma_{0,n} \cup \Gamma_{1,n} \cup \Gamma_{2,n}$. Therefore, $S(z)$
has an analytic continuation across $\Gamma_{0,n} \cup \Gamma_{1,n} \cup \Gamma_{2,n}$ and we obtain a RH problem for $S$ on the
contour $\Sigma^S = \Sigma_{3,n} \cup \Sigma_{4,n} \cup [0, \infty)$. Here $\Sigma^S$ divides the complex plane into two domains
$\Omega_0^S$ and $\Omega_\infty^S$; see Figure~\ref{contourt22}. As before $(z-1)^{2b}$ is defined with a cut along $[1,
\infty)$.

\begin{figure}[h]
\centering
\includegraphics[width=220pt]{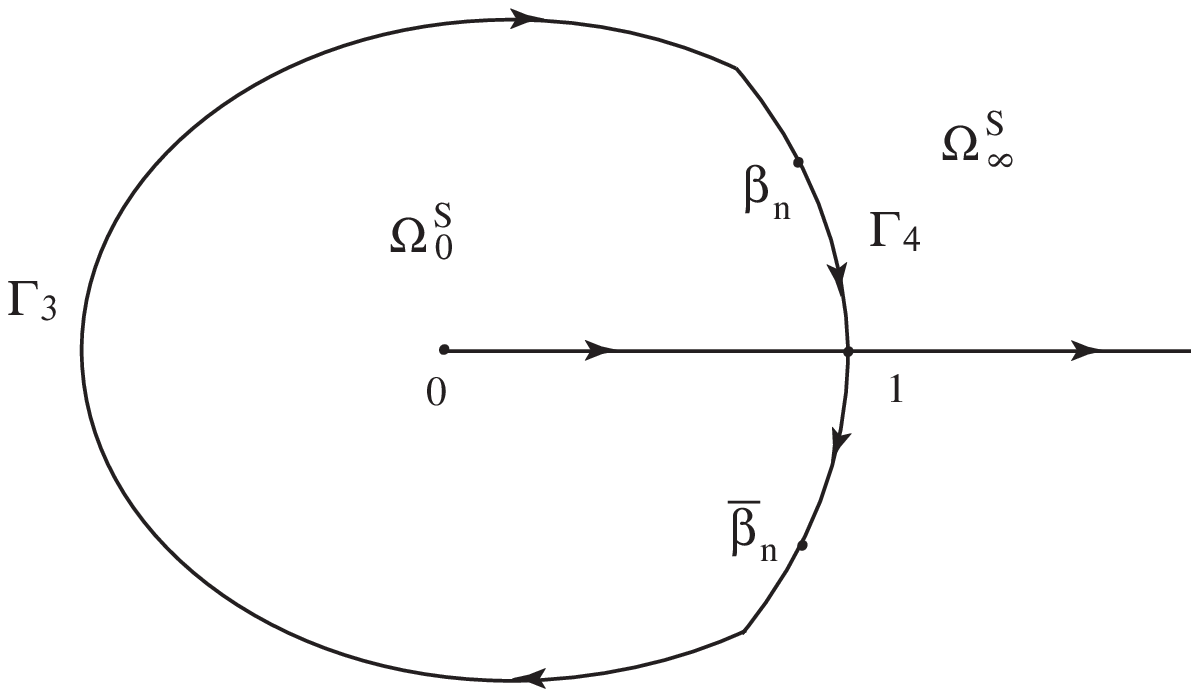}
\caption{The contour $\Sigma^S$ for the RH problem for $S$} \label{contourt22}
\end{figure}

\begin{enumerate}

\item[(a)] $S$ is analytic for $z \in \mathbb{C} \setminus
\Sigma^S$, see Figure~\ref{contourt22};

\item[(b)]
\begin{align}
S_+(z) = \ & S_-(z) \left( \begin{matrix} 1 & 0 \\  (z-1)^{- 2b} e^{2 n \phi_{t,n}(z) } & 1 \end{matrix} \right) &
\textrm{for } z \in \Gamma_{3,n}, \\
S_+(z) = \ & S_-(z) \left( \begin{matrix} 1 & \displaystyle\frac{(z-1)^{2b}}{e^{2n \phi_{t,n,+}(z)}}   - \frac{(z-1)^{2b}}{e^{2n
\phi_{t,n,-}(z)} )} \\ 0 & 1
\end{matrix} \right) & \textrm{for } z \in
(0, 1), \\
S_+(z) = \ & S_-(z) \left( \begin{matrix} e^{2n \phi_{t,n}(z)} & 0 \\ (z-1)^{- 2b} & e^{- 2n \phi_{t,n}(z)} \end{matrix} \right)
& \textrm{for } z \in
\Gamma_{4,n},\\
S_+(z) = \ & S_-(z) \left( \begin{matrix} 1 & \displaystyle\frac{(z-1)_+^{2b}}{e^{2n \phi_{t,n,+}(z)}}
-  \frac{ (z-1)_-^{2b}}{e^{2n \phi_{t,n,-}(z)} )}   \\
0 & 1 \end{matrix} \right) & \textrm{for } z \in (1, \infty);
\end{align}

\item[(c)] as $z \to \infty$
\begin{equation*}
S(z) = I + O\left(\frac{1}{z}\right);
\end{equation*}

\item[(d)] as $z \to 1$
\begin{eqnarray}
S(z) \left( \begin{matrix} e^{2n \phi_{t,n,\pm}(1)} & - (z-1)^{2b} \\  (z-1)^{-2b} & 0 \end{matrix} \right) & = & O (1) \quad
\textrm{for }  z \in \Omega_0^S \cap \mathbb{C}^\pm,  \\
S(z) \left( \begin{matrix} 1 & - e^{-2n \phi_{t,n,+}(1)} (z-1)^{2b} \\ 0 & 1 \end{matrix} \right)
& = & O (1) \quad \textrm{for }  z \in \Omega_\infty^S \cap \mathbb{C}^+,  \\
S(z) \left( \begin{matrix} 1 & - e^{-2n \tilde\phi_{t,n,-}(1)} (z-1)^{2b} \\ 0 & 1 \end{matrix} \right) & = & O (1) \quad
\textrm{for }  z \in \Omega_\infty^S \cap \mathbb{C}^-.
\end{eqnarray}

\end{enumerate}

Like in the Case I, condition (d) is important for the above RH problem for $S$. It makes sure that we have a unique and desired
solution for $S$.

\subsection{Definition of $\psi$ functions}

For future analysis, it is convenient to introduce new branch cuts for the functions $(z-1)^{2b}$, $R_n(z)$, $\phi_{t,n}(z)$ and
$\tilde\phi_{t,n}(z)$. More precisely, we want the branch cuts taken on $\Gamma_{4,n}$. Let $\Gamma_{j\, ,+}$ and $\Gamma_{j\,
,-}$ denote $\Gamma_j \cap \mathbb{C}^+$ and $\Gamma_j \cap \mathbb{C}^-$, respectively.

First for $(z-1)^{2b}$, instead of taking the cut along $[1,\infty)$, we take new cut along $\Gamma_{4,n,-} \cup \Gamma_{3,n,-}
\cup (-\infty, x_0]$, where $x_0$ is the intersection point of $\Gamma_{3,n}$ and $(-\infty, 0]$; see Figure~\ref{cutz-1}. Thus,
\begin{equation} \label{z-1-branch2}
(z-1) \mapsto \begin{cases} (z-1) e^{2 \pi i} & \textrm{for } z \in \Omega_\infty^S \cap \mathbb{C}^-, \\ (z-1) &
\hbox{elsewhere}.
\end{cases}
\end{equation}
\begin{figure}[h]
\centering
\includegraphics[width=200pt]{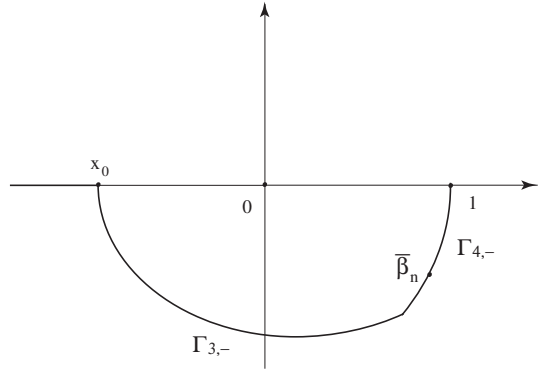}
\caption{The new cut of $(z-1)^{2b}$} \label{cutz-1}
\end{figure}

Then, define
\begin{equation}
\widetilde{R}_n(z) := \sqrt{(z - \beta_n)(z - \bar\beta_n)}, \qquad z \in \mathbb{C} \setminus \Gamma_{4,n},
\end{equation}
where $\widetilde{R}_n(z)$ behaves like $z$ as $z \to \infty$. Also introduce
\begin{figure}[h]
\centering
\includegraphics[width=180pt]{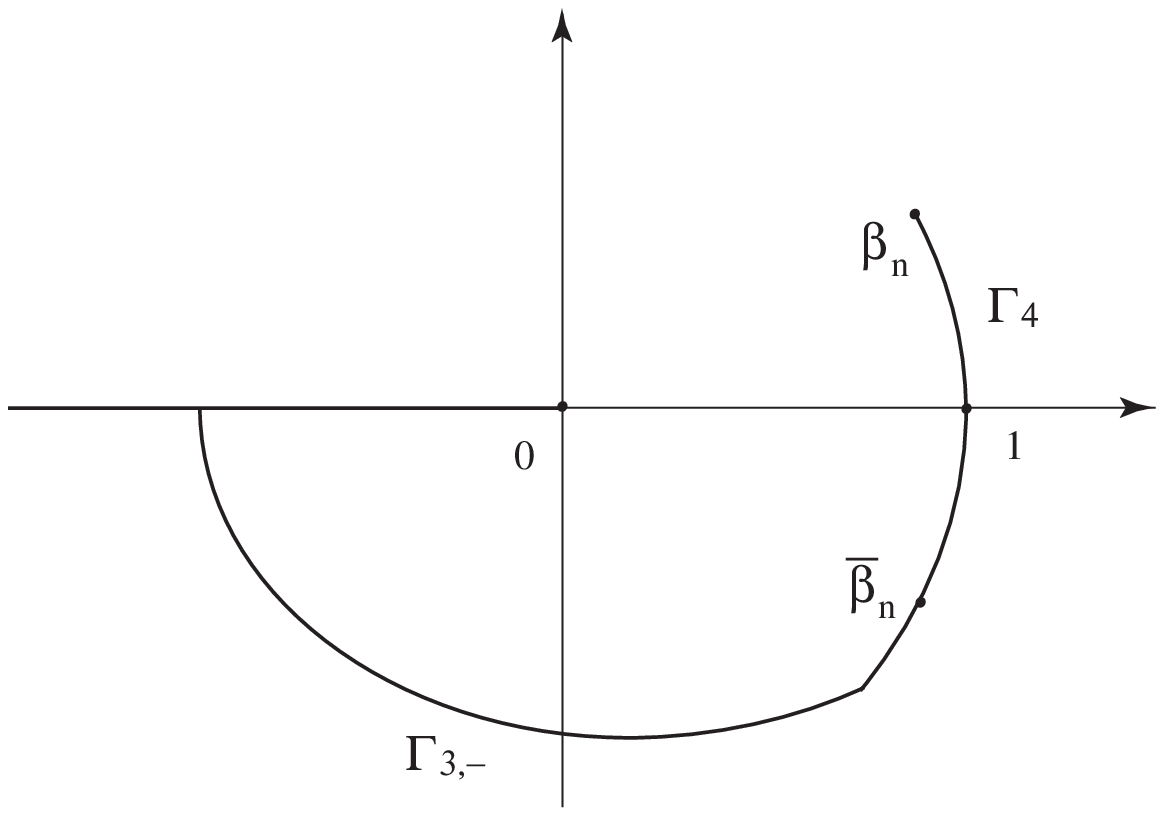}
\hspace{20pt}
\includegraphics[width=180pt]{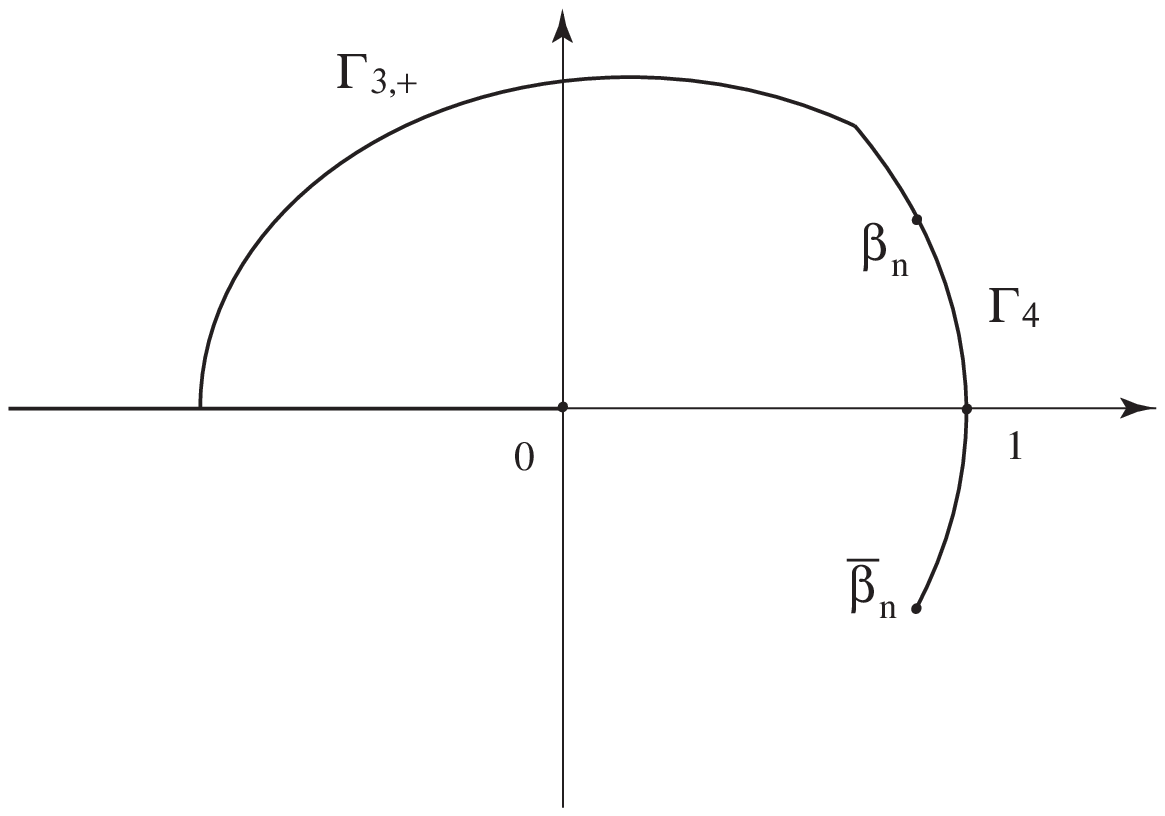}
\caption{The cuts of $\psi_{t,n}$ and $\tilde\psi_{t,n}$} \label{psi-cut}
\end{figure}
\begin{equation}  \label{psi-tn}
\psi_{t,n}(z) := \psi_n(z) + (t-1) \psi_n^0(z) \quad \hbox{and} \quad \tilde\psi_{t,n}(z) := \tilde\psi_n(z) + (t-1) \psi_n^0(z)
\end{equation}
where
\begin{align}
\psi_n(z) := & \frac{1}{2} \int_{\beta_n}^z \frac{\widetilde{R}_n(s)}{s} ds , \qquad z \in \mathbb{C} \setminus (\Gamma_{3,n,-}
\cup \Gamma_{4,n} \cup (-\infty, 0]),
\label{psi-n-1} \\
\tilde\psi_n(z)  := & \frac{1}{2} \int_{\bar\beta_n}^z \frac{\widetilde{R}_n(s)}{s} ds , \qquad z \in \mathbb{C} \setminus
(\Gamma_{3,n,+} \cup \Gamma_{4,n} \cup (-\infty, 0])
\end{align}
and
\begin{equation} \label{psi-n-0}
\psi_n^0(z) := \frac{\widetilde{R}_n(z)}{2}, \qquad z \in \mathbb{C} \setminus \Gamma_{4,n}.
\end{equation}
For illustration of the branch cuts of $\psi_{t,n}$ and $\tilde\psi_{t,n}$, see Figure~\ref{psi-cut}. Here are some useful
properties of these new auxiliary functions, which can be obtained directly from Proposition 4.6.1 in \cite{dev}.

\begin{prop} \label{newfunctions}

\begin{itemize}

\item[(a)] The connection between $R_n$ and $\widetilde{R}_n$:
\begin{equation}
\widetilde{R}_n(z) = \begin{cases} -R_n(z), & \textrm{for } z \in \Omega_0, \\
R_n(z), & \textrm{elsewhere}.
\end{cases}
\end{equation}

\item[(b)] The connection between $\psi_{t,n}$ and $\phi_{t,n}$:
\begin{equation}
\psi_{t,n}(z) = \begin{cases} \phi_{t,n}(z), & \text{ for }
z \in  ( \Omega_3 \cup \Omega_1 \cup \Omega_\infty ) \cap \mathbb{C}^+ , \\
-\phi_{t,n}(z), & \text{ for } z \in \Omega_0 \cap \mathbb{C}^+, \\
-\phi_{t,n}(z) + A_n \pi i, & \text{ for } z \in \Omega_0 \cap \mathbb{C}^-, \\
\phi_{t,n}(z) - A_n \pi i, & \text{ for } z \in \Omega_2, \\
\phi_{t,n}(z) + A_n \pi i, & \text{ for } z \in  \Omega_1 \cap \mathbb{C}^- , \\
\phi_{t,n}(z) + (2- A_n) \pi i, & \text{ for } z \in  \Omega_\infty \cap \mathbb{C}^-. \end{cases}
\end{equation}

\item[(c)] The connection between $\psi_{t,n}$ and $\tilde\psi_{t,n}$:
\begin{equation}
\psi_{t,n}(z)  = \begin{cases} \tilde\psi_{t,n}(z) - \frac{\nu}{n} \pi i , & \text{ for } z \in \Omega_0^S, \\
\tilde\psi_{t,n}(z) + \frac{\nu}{n} \pi i, & \text{ for } z \in \Omega_\infty^S. \end{cases}
\end{equation}

\end{itemize}

\end{prop}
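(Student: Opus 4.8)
The plan is to deduce all three statements from the single identity in part~(a) together with the corresponding relations in the Laguerre situation, namely Proposition~4.6.1 of \cite{dev}, which treats the case $t=1$. The key observation is that the only difference between the functions $\psi_{t,n},\tilde\psi_{t,n},\phi_{t,n},\tilde\phi_{t,n}$ used here and those of \cite{dev} is the additive term $(t-1)\psi_n^0$ (respectively $(t-1)\phi_n^0$); since $\psi_n^0=\widetilde R_n/2$ and $\phi_n^0=R_n/2$, part~(a) tells us precisely how these two terms are related on each of the regions listed in~(b), namely $\psi_n^0=-\phi_n^0$ on $\Omega_0$ and $\psi_n^0=\phi_n^0$ elsewhere. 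Thus, granting~(a), parts~(b) and~(c) reduce to checking that adding $(t-1)\psi_n^0$ to both sides of the $t=1$ identities is consistent, which is immediate: for instance on $\Omega_0\cap\mathbb{C}^+$ the asserted identity $\psi_{t,n}=-\phi_{t,n}$ is equivalent to $\psi_n-(t-1)\phi_n^0=-\phi_n-(t-1)\phi_n^0$, i.e.\ to $\psi_n=-\phi_n$; and in part~(c) one has $\psi_{t,n}-\tilde\psi_{t,n}=\psi_n-\tilde\psi_n$ exactly, so there is nothing to prove beyond the $t=1$ case.

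For part~(a): both $R_n$ and $\widetilde R_n$ are branches of $\big((z-\beta_n)(z-\bar\beta_n)\big)^{1/2}$ normalized to behave like $z$ at infinity, the first with its cut on $\Gamma_{0,n}$ and the second with its cut on $\Gamma_{4,n}$. Because $\Gamma_{0,n}$ and $\Gamma_{4,n}$ share the endpoints $\beta_n,\bar\beta_n$, the quotient $R_n/\widetilde R_n$ is analytic on $\mathbb{C}\setminus(\Gamma_{0,n}\cup\Gamma_{4,n})$, takes only the values $\pm1$, and equals $1$ on the unbounded component since both functions are $\sim z$ at infinity. On the bounded component, which is $\Omega_0$, the quotient is the constant $-1$: indeed $\widetilde R_n$ is analytic across $\Gamma_{0,n}$ and therefore provides the analytic continuation of $R_n$ from the unbounded component into $\Omega_0$, whereas $R_n$ itself satisfies $R_{n,+}=-R_{n,-}$ on $\Gamma_{0,n}$, so the two branches differ by a sign there. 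Hence $\widetilde R_n=-R_n$ on $\Omega_0$ and $\widetilde R_n=R_n$ elsewhere, which is~(a); in particular $\widetilde R_n(0)=-R_n(0)$, and from the classical normalization one has $R_n(0)=-A_n=-(1-\nu/n)$.

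It then remains to recall, essentially verbatim from \cite[Prop.~4.6.1]{dev}, the $t=1$ relations. Since $\psi_n-\phi_n$ has vanishing derivative wherever $\widetilde R_n=R_n$ and $\psi_n+\phi_n$ has vanishing derivative wherever $\widetilde R_n=-R_n$, these quantities are locally constant on the corresponding regions; the constants are pinned down by letting $z\to\beta_n$ (where the defining integrals of $\psi_n$ and $\phi_n$ both vanish) and by the jumps of $\psi_n$ and $\phi_n$ across their respective cuts $(-\infty,0]$ and $[0,\infty)$, each of which equals $2\pi i\,\Res_{s=0}\frac{R_n(s)}{2s}=\pi i\,R_n(0)=-\pi i\,A_n$ up to orientation; this is exactly how the values $A_n\pi i$, $(2-A_n)\pi i$ in~(b) and $\frac{\nu}{n}\pi i$ in~(c) (note $\frac{\nu}{n}=1-A_n$) are produced, once one also uses the elementary expansion of $\log\beta_n-\log\bar\beta_n$. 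I expect the only real difficulty to be the bookkeeping: for each of the regions $\Omega_0,\Omega_1,\Omega_2,\Omega_3,\Omega_\infty$ (and their upper and lower parts) one must keep track of which of the several cuts each of $R_n,\widetilde R_n,\phi_n,\tilde\phi_n,\psi_n,\tilde\psi_n$ carries and of the signs and winding numbers picked up when a path of integration is forced around one of them. None of this is conceptually hard, and since the pattern is identical to the one already carried out for Laguerre polynomials in \cite{dev}, the details may simply be transcribed from there, with $\beta_{1,n},\beta_{2,n}$ replaced by $\beta_n,\bar\beta_n$ and $A_n=1-\nu/n$.
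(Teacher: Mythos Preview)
Your approach is correct and is essentially the same as the paper's: both reduce the proposition to Proposition~4.6.1 of \cite{dev}. You go a bit further than the paper by (i) giving a self-contained argument for part~(a), and (ii) making explicit the observation that the $(t-1)$-corrections $(t-1)\psi_n^0$ and $(t-1)\phi_n^0$ either cancel or match on each region by virtue of~(a), so that~(b) and~(c) reduce verbatim to their $t=1$ counterparts; the paper simply cites \cite{dev} without spelling this out.
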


Then, with the help of (\ref{z-1-branch2}) and Proposition \ref{newfunctions}, the RH problem for $S$ can be rewritten as
follows. Now $(z-1)^{2b}$ is defined with a cut along $\Gamma_{4,n,-} \cup \Gamma_{3,n,-} \cup (-\infty, x_0]$.
\begin{enumerate}

\item[(a)] $S$ is analytic for $z \in \mathbb{C} \setminus
\Sigma^S$; see Figure~\ref{contourt22};

\item[(b)]
\begin{align} \label{s2-jump1}
S_+(z) = & \ S_-(z) \left( \begin{matrix} 1 & 0 \\  (z-1)^{- 2b} e^{2 n \psi_{t,n}(z) } & 1 \end{matrix} \right), &
z \in \Gamma_{3,n,+}, \\
S_+(z) = & \ S_-(z) \left( \begin{matrix} 1 & 0 \\ (z-1)_-^{- 2b}  e^{2 n \tilde\psi_{t,n}(z) } & 1 \end{matrix}
\right),&  z \in \Gamma_{3,n,-}, \\
S_+(z) = & \ S_-(z) \left( \begin{matrix} e^{2n \psi_{t,n,+}(z)} & 0 \\ (z-1)^{- 2b} & e^{2n \psi_{t,n,-}(z)}
\end{matrix} \right), &
z \in \Gamma_{4,n,+}, \\
S_+(z) = & \ S_-(z) \left( \begin{matrix} e^{2n \tilde\psi_{t,n,+}(z)} & 0 \\ (z-1)_-^{- 2b} & e^{2n \tilde\psi_{t,n,-}(z)}
\end{matrix} \right), &  z \in \Gamma_{4,n,-}, \\
S_+(z) = & \ S_-(z) \left( \begin{matrix} 1 & (z-1)^{2b} e^{-2n \psi_{t,n}(z)} (e^{-2 \nu \pi i} - 1) \\ 0 & 1
\end{matrix} \right), &  z \in (0, 1),
\end{align}
\begin{equation}
S_+(z) = S_-(z) \left( \begin{matrix} 1 & (z-1)^{2b} e^{-2n \psi_{t,n}(z)} (1 - e^{ 2 (\nu + 2b) \pi i}) \\ 0 & 1
\end{matrix} \right), \qquad z \in (1, \infty), \label{s2-jump6}
\end{equation}

\item[(c)] as $z \to \infty$
\begin{equation*}
S(z) = I + O\left(\frac{1}{z}\right);
\end{equation*}

\item[(d)]  as $z \to 1$
\begin{eqnarray}
S(z) \left( \begin{matrix} e^{2n \phi_{t,n,\pm}(1)} & - (z-1)^{2b} \\  (z-1)^{-2b} & 0 \end{matrix} \right) & = & O (1) \quad
\textrm{for }  z \in \Omega_0^S \cap \mathbb{C}^\pm, \label{s-1-behavior1} \\
S(z) \left( \begin{matrix} 1 & - e^{-2n \phi_{t,n,+}(1)} (z-1)^{2b} \\ 0 & 1 \end{matrix} \right)
& = & O (1) \quad \textrm{for }  z \in \Omega_\infty^S \cap \mathbb{C}^+, \label{s-1-behavior2} \\
S(z) \left( \begin{matrix} 1 & -  e^{-2n \tilde\phi_{t,n,-}(1) + 4b \pi i} (z-1)^{2b} \\ 0 & 1 \end{matrix} \right) & = & O (1)
\quad \textrm{for }  z \in \Omega_\infty^S \cap \mathbb{C}^-.  \label{s-1-behavior3}
\end{eqnarray}
\end{enumerate}

Following similar analysis as in the Case I, one can see that most of the jump matrices in the above RH problem tend to the
identity matrix as $n \rightarrow \infty$ at an exponential rate. More precisely, we have the following proposition.

\begin{prop} \label{b-delta2}

Let $\delta$ be a fixed small constant and $B_\delta : = \{ z \in \mathbb{C} \ | \ |z-1|< \delta \}$. Then there exist $\eta>0$
and $\varepsilon > 0$ such that for all $t \in \mathbb{R}$ with $|t-1|<\eta$ and for all $n$ large enough, we have
$$\re
\psi_{t,n}(z) < - \varepsilon \qquad \hbox{for } z \in (\Gamma_{3,n} \cup \Gamma_{4,n}) \setminus B_\delta$$ and
$$\re \psi_{t,n}(z) > \varepsilon ( |z| + 1) \qquad \hbox{for } z \in
(0,\infty) \setminus B_\delta.$$ The same inequalities hold for $\re\tilde\psi_{t,n}(z)$.

\end{prop}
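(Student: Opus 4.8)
The plan is to follow the proof of Proposition~\ref{b-delta} almost verbatim, replacing $\phi_{t,n}$ by $\psi_{t,n}$ and using the dictionary of Proposition~\ref{newfunctions} to pass between the two families of functions. First I would record the limiting behaviour of the new auxiliary functions. Since $\beta_n \to 1$ as $n\to\infty$ by \eqref{beta3-def}, we have $\widetilde R_n(z) \to z-1$ uniformly on $\mathbb C$, hence $\psi_n^0(z) = \widetilde R_n(z)/2 \to (z-1)/2$ uniformly, and from \eqref{psi-n-1}, exactly as in \eqref{phi-1inf},
\[ \psi_n(z) \longrightarrow \tfrac12(z-1-\log z) \]
uniformly for $z$ bounded away from $0$ and $\infty$; near those points $\psi_n(z) = -\log z + O(1)$ as $z\to 0$ and $\psi_n(z) = z/2 + O(1)$ as $z\to\infty$, both uniformly in $n$. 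By \eqref{psi-tn}, $\psi_{t,n} = \psi_n + (t-1)\psi_n^0$, so all of these estimates persist for $\psi_{t,n}$ once $|t-1|$ is small, with a uniformly controlled error.

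Next I would transfer the estimates to the contours in the statement. Because $\beta_n \to 1$, for every fixed $\delta$ the arc $\Gamma_{4,n}$ lies inside $B_\delta$ for all large $n$, so $\Gamma_{4,n}\setminus B_\delta = \emptyset$ and only $\Gamma_{3,n}\setminus B_\delta$ and $(0,\infty)\setminus B_\delta$ need attention. The closed contour $\Gamma_{3,n}\cup\Gamma_{4,n}$ encircles the Szeg\H{o} curve and tends, as $n\to\infty$, to a fixed closed curve lying in the region $\re(z-1-\log z)<0$ (the white region of Figure~\ref{contourt2phi}); combining this with the behaviour near $0$ and near $\infty$ produces a fixed $\varepsilon>0$ with $\re\psi_n(z) < -\varepsilon$ on $\Gamma_{3,n}\setminus B_\delta$ and $\re\psi_n(z) > \varepsilon(|z|+1)$ on $(0,\infty)\setminus B_\delta$, uniformly for $n$ large. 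On the portions of these curves in $\mathbb C^-$ one invokes Proposition~\ref{newfunctions}(b): there $\psi_{t,n}$ differs from $\pm\phi_{t,n}$ only by a purely imaginary constant, so its real part is unchanged and the sign is read off from Figure~\ref{contourt2phi} just as before (alternatively one argues directly from the integral representation \eqref{psi-n-1}). A continuity argument in $t$, as in Proposition~\ref{b-delta}, then yields the same inequalities for $\psi_{t,n}$ provided $|t-1|<\eta$. Finally, Proposition~\ref{newfunctions}(c) gives $\psi_{t,n} = \tilde\psi_{t,n} \mp \frac{\nu}{n}\pi i$, so $\re\psi_{t,n} = \re\tilde\psi_{t,n}$ everywhere and the same bounds hold for $\tilde\psi_{t,n}$.

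The main obstacle is not analytic but bookkeeping: one must track the branch cuts introduced in \eqref{z-1-branch2}--\eqref{psi-n-0} and match each arc of $\Gamma_{3,n}$, as well as the two banks of the real axis, to the correct case of Proposition~\ref{newfunctions}(b), so that the sign of $\re\psi_{t,n}$ comes out right --- precisely the case analysis already carried out in \cite{km1} for the pure Laguerre weight, onto which the extra factor $(z-1)^{2b}$ has no bearing. The only genuinely delicate estimate is the uniformity near $z=0$ and $z=\infty$, where the limiting expression $\tfrac12(z-1-\log z)$ degenerates; this is handled by the separate asymptotic estimates for $\psi_n$ noted above, exactly as in the proof of Proposition~\ref{b-delta}.
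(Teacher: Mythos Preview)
Your proposal is correct and follows essentially the same approach as the paper, which simply remarks that the proof is similar to that of Proposition~\ref{b-delta}. In fact you supply considerably more detail than the paper does, including the observation that $\Gamma_{4,n}\setminus B_\delta=\emptyset$ for large $n$ and the use of Proposition~\ref{newfunctions}(c) to pass from $\psi_{t,n}$ to $\tilde\psi_{t,n}$.
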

\begin{proof}
This proposition is similar with Proposition \ref{b-delta} and the proof is similar, too.
\end{proof}


\subsection{Construction of the local parametrix}

From Proposition \ref{b-delta2}, we know  that the jump matrices for $S$ are exponentially close to the identity matrix as $n
\to \infty$, except for the ones in the neighborhood of 1. Let us focus on the RH problem of $S$ restricted to a neighborhood
$B_\delta $ of $z=1$.  We seek a $2 \times 2$ matrix valued function $P$ that satisfies the following RH problem.

\begin{enumerate}

\item[(a)] $P(z)$ is analytic for $z \in B_\delta \setminus
\Sigma^S$, and continuous on $\overline{B}_\delta \setminus \Sigma^S$.

\item[(b)] $P(z)$ satisfies the same jump conditions on $\Sigma^S
\cap B_\delta$ as $S$ does; see (\ref{s2-jump1})--(\ref{s2-jump6}).

\item[(c)] on $\partial B_\delta$, as $n \to \infty$
\begin{equation} \label{p2-0condition}
P(z) = \left( I + O \left( \frac{1}{\sqrt{n}} \right) \right) n^{\frac{b}{2} \sigma_3} \qquad \textrm{for } z \in \partial
B_\delta \setminus \Sigma^S.
\end{equation}

\item[(d)] $P(z)$ satisfies the same local behavior near 1 as $S$ does; see (\ref{s-1-behavior1})--(\ref{s-1-behavior3}).

\end{enumerate}
As in the previous case, we can not avoid the factor $n^{\frac{b}{2} \sigma_3}$ in (\ref{p2-0condition}).

To transform the RH problem for $P$ into a RH problem for $Q$ with constant jump matrices, we seek $P$ in the following form
\begin{equation} \label{pq}
P(z) = \begin{cases} \sigma_1 Q(z) \sigma_1 e^{(n \psi_{t,n}(z) - \frac{1}{2}(\nu + b)\pi i) \sigma_3} (z-1)^{-b \sigma_3}
\qquad \textrm{for } z \in B_\delta \cap \Omega^S_0, \\
\sigma_1 Q(z) \sigma_1 e^{(n \tilde\psi_{t,n}(z) + \frac{1}{2}(\nu - b)\pi i) \sigma_3} (z-1)^{-b \sigma_3} \qquad \textrm{for }
z \in B_\delta \cap \Omega^S_\infty,
\end{cases}
\end{equation}
where $(z-1)^b$ is defined with a cut along $\Gamma_{4,n,-} \cup \Gamma_{3,n,-} \cup(-\infty, x_0]$. The factors $e^{n
\psi_{t,n}(z) \sigma_3 } (z-1)^{-b \sigma_3}$ and $e^{n \tilde\psi_{t,n}(z) \sigma_3} (z-1)^{-b \sigma_3}$ in (\ref{pq}) are
introduced to cancel the corresponding factors in the jump matrices in (\ref{s2-jump1})--(\ref{s2-jump6}) and make them
independent of $z$.

Reorient the curves $\Sigma^S \cap B_\delta$ and let them all extend from the point 1. Then it can be verified that, in order
for $P(z)$ to satisfy the desired RH problem, $Q(z)$ should satisfy a RH problem as follows:
\begin{enumerate}

\item[(a)] $Q(z)$ is analytic for $z \in B_\delta \setminus
\Sigma^S$, and continuous on $\overline{B}_\delta \setminus \Sigma^S$;

\item[(b)]
\begin{align}
Q_+(z) & =  Q_-(z) \left( \begin{matrix} 1 & 0 \\ e^{-(\nu + b) \pi i} - e^{ (\nu + 3b ) \pi i} & 1
\end{matrix} \right), \hspace{65pt} z \in (1, 1+ \delta) ; \label{q2-jump1} \\
Q_+(z) & =  Q_-(z) \left( \begin{matrix} 1 & -e^{(\nu + b)\pi i}  \\ 0 & 1 \end{matrix} \right), \hspace{55pt}
z \in (\Gamma_{3,n,+} \cup \Gamma_{4,n,+})\cap B_\delta; \label{q2-jump2} \\
Q_+(z) & =  Q_-(z) \left( \begin{matrix} 1 & 0 \\ (e^{\nu \pi i} - e^{-\nu \pi i}) e^{-(2\nu + b) \pi i}  & 1
\end{matrix} \right), \hspace{45pt} z \in (1 - \delta,1); \label{q2-jump3} \\
Q_+(z) & =  Q_-(z) \left( \begin{matrix} 1 & e^{(3 \nu + b)\pi i}  \\ 0 & 1
\end{matrix} \right) e^{2 (\nu + b) \pi i \sigma_3}, \quad z \in (\Gamma_{3,n,-} \cup \Gamma_{4,n,-}) \cap B_\delta;
\label{q2-jump4}
\end{align}

\item[(c)] for $z \in \partial B_\delta$ as $n \to \infty$,
\begin{align}
    Q(z) & = \left( I + O\left(\frac{1}{\sqrt{n}}\right) \right) (\sqrt{n}(z-1))^{-(\nu + b) \sigma_3}
    \exp\biggl[\frac{n}{2} (z-1- \log{z}) \nonumber \\
    & \hspace{15pt} + \frac{n^{\frac{1}{2}} \sqrt{2} \, L (z-1)}{2}
    + \frac{\nu}{2} (\log{z} + \log(-\nu) - 1) - \frac{b}{2} \pi i  \biggr] \sigma_3 \label{q2-match}
\end{align}
where $\log{z}$ and $(z-1)^{-(\nu + b)}$ are defined with cuts along $(-\infty, 0]$ and $\Gamma_{4,n,-} \cup \Gamma_{3,n,-}
\cup(-\infty, x_0]$, respectively;

\item[(d)] $Q(z)$ has the following behavior as $z \to 1$:
\begin{align}
Q(z) \left( \begin{matrix} 0 & e^{ (\nu + b) \pi i}  \\
- e^{ -(\nu + b) \pi i}  & 1 \end{matrix} \right) (z-1)^{ b \sigma_3} & = O(1), & z \in \Omega_0^S \cap
\mathbb{C}^+, \label{q2-0-behave1} \\
Q(z) \left( \begin{matrix} 0 & e^{ (\nu + b) \pi i}  \\
- e^{ -(\nu + b) \pi i}  & e^{- 2 \nu \pi i} \end{matrix} \right) (z-1)^{ b \sigma_3} & = O(1), &  z \in \Omega_0^S \cap
\mathbb{C}^-, \\
Q(z) \left( \begin{matrix} 1 & 0 \\
- e^{ -(\nu + b) \pi i} & 1 \end{matrix} \right) (z-1)^{ b \sigma_3} & = O(1), & z \in \Omega_\infty^S \cap
\mathbb{C}^+,  \\
Q(z) \left( \begin{matrix} 1 & 0 \\
- e^{ (\nu + 3b) \pi i}  & 1 \end{matrix} \right) (z-1)^{ b \sigma_3} & = O(1), &  z \in \Omega_\infty^S \cap \mathbb{C}^-,
\label{q2-0-behave4}
\end{align}
where $(z-1)^b$ is defined with a cut along $\Gamma_{4,n,-} \cup \Gamma_{3,n,-} \cup(-\infty, x_0]$.

\end{enumerate}

The above RH problem for $Q$ is very similar to the RH problem in Section \ref{rhp-q}. One can see that there is only a constant
difference between corresponding formulas in these two RH problems. Also again there is no difference for the jump matrices
(\ref{q2-jump2}) and (\ref{q2-jump4}) in the neighborhood of $\beta_n$ and $\bar\beta_n$, respectively.

To obtain the limiting behavior as $z \to 1$ in (\ref{q2-0-behave1})--(\ref{q2-0-behave4}), one have to make use of
(\ref{phi-phi2}) and Proposition \ref{newfunctions}. To derive the matching condition in (\ref{q2-match}), we need the following
asymptotic formulas for $\psi_n^0(z)$ and $\psi_n(z)$ as $n \to \infty$. For $z \in \partial B_\delta$, we have from
(\ref{psi-n-0})
\begin{equation} \label{psi0a}
\psi_n^0(z) = \frac{1}{2}(z-1) - \frac{\nu (z+1)}{2n(z-1)} + O \left( \frac{1}{n^2} \right)
\end{equation}
and from (\ref{psi-n-1}), (calculated with Maple)
\begin{align}
\psi_{1,n}(z) & = \frac{1}{2}(z - 1 -\log{z}) - \frac{\nu}{2n} \log{n} + \frac{\nu}{2n}
\Big( -2 \log(z-1) + \pi i + \log{z} + \log(-\nu) - 1 \Big) \nonumber \\
& \quad + \frac{\nu^2}{n^2 (z-1)^2} + O \left( \frac{1}{n^3} \right), \label{psi1a}
\end{align}
where $\log{z}$ and $\log(z-1)$ are defined with cuts along $(-\infty,0]$ and $\Gamma_{4,n,-} \cup \Gamma_{3,n,-} \cup(-\infty,
x_0]$, respectively; see \cite{dev}. The rest of the calculations is the same as in the Case I and we do not go into details.

Now we are in a situation very similar to the Case I in Section \ref{case1-construct1}. First, from the factors $(z-1)^{b
\sigma_3}$ in (\ref{q2-0-behave1})--(\ref{q2-0-behave4}) and $(z-1)^{- (\nu + b) \sigma_3} $ in (\ref{q2-match}), again we have
\begin{equation}
\Theta = - b, \qquad \Theta_{\infty} = \nu + b,
\end{equation}
which are the same as in (\ref{theta-def}). Next, from (\ref{q2-jump1})--(\ref{q2-jump4}) we need the Stokes multipliers
\begin{equation} \label{case2-sm}
\begin{array}{ll}
s_1= e^{-(\nu + b) \pi i} - e^{ (\nu + 3b ) \pi i} , & s_2 = -e^{(\nu + b)\pi i}, \\
s_3 = (e^{\nu \pi i} - e^{-\nu \pi i}) e^{-(2\nu + b) \pi i} , & s_4 = e^{(3 \nu + b)\pi i}.
\end{array}
\end{equation}
Although (\ref{case1-sm}) and (\ref{case2-sm}) are not the same, the above Stokes multipliers, like those in (\ref{case1-sm}),
are again equivalent to those in (\ref{case0-sm}) under the transformation in (\ref{sm-trans}), but with a different constant $d
= - e^{- \nu \pi i } $. Then following almost the same analysis as in the construction of $Q$ in (\ref{p1con}), except for a
constant difference, we construct our parametrix by
\begin{equation} \label{q-para}
Q(z) = E(z) \Psi\left(n^{\frac{1}{2}} f(z), L \frac{z-1}{\sqrt{2} f(z)} \right)
\end{equation}
with the same $f(z)$ as given in (\ref{f-def}) and
\begin{equation}
E(z) = \left( \frac{f(z)}{z-1} \right)^{(\nu  + b) \sigma_3} ( - \nu e^{-1} \; z)^{\frac{\nu}{2} \sigma_3} e^{- \frac{b}{2} \pi
i \sigma_3}.
\end{equation}

There is one thing remaining about the curve $ (\Gamma_{3,n} \cup \Gamma_{4,n}) \cap B_\delta$ under the mapping of $f(z)$.
Observe that $f(z)$ does not map $ (\Gamma_{3,n} \cup \Gamma_{4,n}) \cap B_\delta$ to the imaginary axis. However, for $n$ large
enough, according to our choice of $\Gamma_{3,n}$ and $\Gamma_{4,n}$, $f((\Gamma_{3,n} \cup \Gamma_{4,n}) \cap B_\delta)$ is
close to the imaginary axis. In the RH problem for $\Psi$ in Section \ref{rh-piv}, by the principle of analytic continuation, we
can modify $i \, \mathbb{R}$ to a new contour $\Lambda_n$ such that $f((\Gamma_{3,n} \cup \Gamma_{4,n}) \cap B_\delta) \subset
\Lambda_n$ for $n$ large enough. Therefore, we can still use the parametrix constructed in (\ref{q-para}).


\subsection{Proof of Theorem \ref{thm1} in Case II}

Having $Q$ as given in (\ref{q-para}), we can continue the analysis as in Sections \ref{r-section}--\ref{thm1-proof}. The rest
of the proof is the same as in the Case I and we do not go into details.


\section{Proof of Theorem \ref{thm3}} \label{thm3-proof}

Then we are ready to prove Theorem \ref{thm3}.

\begin{proof}
Let us consider Case I first. From (\ref{y-sol}), (\ref{yy*}) and (\ref{t-y}), we have
\begin{equation} \label{pi&t}
\pi_n(z) = (Y(z))_{11} = (U(z))_{11} = (T(z))_{11} e^{n g_{t,n}(z)}.
\end{equation}
First for $z \in \Omega_\infty^S$, we get from (\ref{t-s3})
\begin{equation} \label{pi-region2}
\pi_n(z) = (S(z))_{11}  e^{n g_{t,n}(z)}, \qquad  z \in \Omega_\infty^S.
\end{equation}
By (\ref{r-s}) and (\ref{r-nasy}), it is known that
\begin{equation} \label{s&r}
S(z) = n^{-\frac{b}{2} \sigma_3} R(z) n^{\frac{b}{2} \sigma_3} = n^{-\frac{b}{2} \sigma_3} \left( I + O \left(
\frac{1}{\sqrt{n}} \right) \right) n^{\frac{b}{2} \sigma_3} \quad \textrm{uniformly for } z \in \mathbb{C} \setminus B_\delta.
\end{equation}
Combining (\ref{pi-region2}) and (\ref{s&r}) gives us
\begin{equation} \label{pi-zero1}
\pi_n(z)= \left( 1 + O \left( \frac{1}{\sqrt{n}} \right) \right) e^{n g_{t,n}(z)} \quad \textrm{uniformly for } z \in
\Omega_\infty^S \setminus B_\delta.
\end{equation}
So, for $n$ large enough, there is no zero of $\pi_n(z)$ for $z \in \Omega_\infty^S \setminus B_\delta$.

Then let us consider $z \in \Omega_0^S.$ From (\ref{phi-phi}) and (\ref{t-s1}) we have
\begin{equation} \label{t&s1}
(T(z))_{11} = (-1)^n e^{-2n \phi_{t,n}(z) \mp \nu \pi i} (S(z))_{11} + (-1)^n \frac{e^{- \nu \pi i}- e^{\nu \pi i}}{ (z-1)^{2b}}
(S(z))_{12} \quad \textrm{for } z \in \Omega_0 \cap \mathbb{C}^{\pm},
\end{equation}
and from (\ref{phi-phi}) and (\ref{t-s2}) we get
\begin{equation} \label{t&s2}
(T(z))_{11} = (S(z))_{11} + \frac{e^{- \nu \pi i}- e^{\nu \pi i}}{(z-1)^{2b}} e^{2n \phi_{t,n}(z) \pm \nu \pi i} (S(z))_{12}
\qquad \textrm{for } z \in \Omega_1 \cap \mathbb{C}^{\pm}.
\end{equation}
Then, combining (\ref{pi&t}), (\ref{s&r}), (\ref{t&s1}) and (\ref{t&s2}) gives us
\begin{align}
\pi_n(z) = & \ e^{n g_{t,n}(z)} \left[ (-1)^n e^{-2n \phi_{t,n}(z) \mp \nu \pi i} \left(1 + O \left( \frac{1}{\sqrt{n}} \right)
\right) \right. \nonumber \\
& \left. \hspace{50pt} + (-1)^n \frac{e^{- \nu \pi i}- e^{\nu \pi i}}{ n^b (z-1)^{2b}} (R(z))_{12} \right]  \quad \textrm{for }
z \in \Omega_0 \cap \mathbb{C}^{\pm} \setminus B_\delta , \label{pi-region0}
\end{align}
and
\begin{align}
\pi_n(z) = & \ e^{n g_{t,n}(z)} \left[ 1 + O \left( \frac{1}{\sqrt{n}} \right) + \frac{e^{- \nu \pi i}- e^{\nu \pi i}}{n^b
(z-1)^{2b}} e^{2n
\phi_{t,n}(z) \pm \nu \pi i} O \left( \frac{1}{\sqrt{n}} \right) \right] \nonumber \\
& \textrm{\hspace{220pt} for } z \in \Omega_1 \cap \mathbb{C}^{\pm} \setminus B_\delta . \label{pi-region1}
\end{align}
For $(R(z))_{12}$ in (\ref{pi-region0}), we get from (\ref{r-nasy}), (\ref{p-1-residue}) and (\ref{r-1-explicit})
\begin{equation}
(R(z))_{12} = \frac{\sqrt{2}}{\sqrt{n} (z-1) y(L)} \Big( K(L) - \nu \Big) \rho^{-2} + O(n^{-1}) \quad \textrm{for } z \in
\Omega_0 \setminus B_\delta .
\end{equation}
According to our assumption $K(L) \neq \nu$ in Theorem \ref{thm3}, we know that
\begin{equation}
(R(z))_{12} \neq 0 \qquad \textrm{for } z \in \Omega_0 \setminus B_\delta \hbox{ and $n$ large enough}.
\end{equation}
Let $U(\mathcal{S})$ be the neighborhood of the Szeg\H{o} curve $\mathcal{S}$. Note that $\Gamma_{0,n}$ tends to $\mathcal{S}$
as $n \to \infty$; see Lemma \ref{gamma0-szego}. Using the similar analysis as in the proof of Proposition \ref{b-delta}, it can
be shown that there exists  $\varepsilon^* > 0 $ such that $\re \phi_{t,n}(z) > \varepsilon^*$ for $z \in \Omega_0 \setminus
U(\mathcal{S})$. This means that $e^{-2n \phi_{t,n}(z)}$ is exponentially small as $n \to \infty$. Then, from (\ref{pi-region0})
we have
\begin{equation} \label{pi-zero2}
\pi_n(z) =  \ e^{n g_{t,n}(z)} (-1)^n \frac{e^{- \nu \pi i}- e^{\nu \pi i}}{(z-1)^{2b}} \left[ \frac{\sqrt{2} \Big( K(L) - \nu
\Big)}{\sqrt{n} (z-1) y(L)} \rho^{-2} + O(n^{-1}) \right]
\end{equation}
for $z \in \Omega_0 \setminus ( B_\delta \cup U(\mathcal{S}) )$.. So, for $n$ is large enough, there is no zero of $\pi_n(z)$ in
this region.

For $z \in \Omega_1 \setminus U(\mathcal{S})$, similarly it can be shown that $e^{n \phi_{t,n}(z)}$ is exponentially small as $n
\to \infty$. Thus, we get from (\ref{pi-region1})
\begin{equation*} \label{pi-zero3}
\pi_n(z)= \left( 1 + O \left( \frac{1}{\sqrt{n}} \right) \right) e^{n g_{t,n}(z)}, \quad \hbox{$ z \in \Omega_1 \setminus
(B_\delta \cup U(\mathcal{S}))$},
\end{equation*}
which means there is no zero of $\pi_n(z)$. Thus, we can see that all zeros of $\pi_n(z)$ accumulate in $B_\delta \cup
U(\mathcal{S})$ as $n \to \infty$.

For Case II, the analysis is similar. This completes the proof of Theorem \ref{thm3}.
\end{proof}


\section{The solution to PIV with special parameters} \label{special}

It is a well-known fact that for certain parameters $\Theta$ and $\Theta_\infty$, the PIV equation has special solutions given
in terms of parabolic cylinder functions; see e.g.\ \cite{luka,murata,bch,gromak,okamoto,uw}. The special solution of PIV that
is of interest in this paper (that is, the one characterized by the Stokes multipliers (\ref{case0-sm})) turns out to be of this
type, for certain values of $b$. We will discuss it briefly in this section.

We recall the RH problem for $\Psi$ from (\ref{psi-rhb})--(\ref{psi-0}) with parameters
\begin{equation} \label{special-para}
\Theta = -b \quad \hbox{and} \quad \Theta_\infty = \nu + b,
\end{equation}
and Stokes multipliers (\ref{case0-sm}) given in terms of $b$ and $\nu$ by
\begin{equation} \label{special-sm}
\begin{array}{ll}
s_1= e^{(2 \nu + 3b) \pi i} - e^{-b \pi i}, & s_2 = e^{b \pi i}, \\
s_3 = e^{-(2 \nu + b ) \pi i} - e^{- b \pi i}, & s_4 = - e^{(2 \nu + b ) \pi i}.
\end{array}
\end{equation}
It turns out that for $b \in \mathbb Z/2$, the corresponding special solution of PIV can be expressed in terms of parabolic
cylinder functions. If $\nu \in \mathbb N$, the parabolic cylinder function reduces to a Hermite function and the PIV solution
is a rational function, see also \cite{murata,bch,okamoto,ko}.

We discuss the cases $b=0$ and $b=1/2$ here.

\subsection{The case $b=0$}

Denote by $\Psi^{(b=k/2)}$ the solution of the RH problem for $\Psi$ in Section \ref{rh-piv} with parameters $\Theta$ and
$\Theta_\infty$ given in (\ref{special-para}), Stokes multipliers given in (\ref{special-sm}) and $b = \frac{k}{2}$.

When $b = 0$, by (\ref{special-para}) and (\ref{special-sm}) we have
\begin{equation} \label{para-0}
\Theta = 0, \quad  \quad \Theta_\infty = \nu
\end{equation}
and
\begin{equation} \label{sm-0}
\begin{array}{llll}
s_1= e^{2 \nu \pi i} - 1, & s_2 = 1, & s_3 = e^{- 2 \nu \pi i} - 1, & s_4 = - e^{ 2 \nu \pi i}.
\end{array}
\end{equation}
Substituting (\ref{para-0}) and (\ref{sm-0}) into (\ref{psi-rhb})--(\ref{psi-0}), we get the RH problem for $\Psi^{(b=0)}$. For
this special case $b =0$, the RH problem is solved explicitly in terms of parabolic cylinder functions $D_\nu(\zeta)$, where
$D_\nu(\zeta)$ is the solution of the second order linear differential equation
\begin{equation} \label{pcf-eqn}
y''(\zeta) + \left(\nu - \frac{1}{2} - \frac{1}{4} \zeta^2\right) y(\zeta) = 0
\end{equation}
uniquely characterized by the asymptotic property
\begin{equation} \label{d-asy}
D_{\nu} (\zeta) = \zeta^{\nu} e^{\frac{1}{4} \zeta^2} \left( 1 - \frac{\nu (\nu -1)}{2 \zeta^2} + O \left( \frac{1}{\zeta^4}
\right) \right) \qquad \textrm{as } \zeta \to \infty, |\arg \zeta| < \frac{3 \pi}{4};
\end{equation}
see \cite[Chapter 19]{as}. Other solutions of the differential equation (\ref{pcf-eqn}) are $D_\nu(-\zeta)$ and $D_{-\nu - 1}
(\pm i \zeta)$. These solutions are related as follows
\begin{align}
    D_\nu(z) &= \frac{ \Gamma(1+\nu) }{ \sqrt{2 \pi} } \left( e^{ \frac{\nu
      }{2} \pi i } D_{-\nu-1}(i z) + e^{ - \frac{ \nu}{2} \pi i } D_{-
      \nu - 1}( -i z ) \right) \text{,} \\
    D_\nu( - z ) &= \frac{ \Gamma( 1 + \nu ) }{ \sqrt{ 2 \pi } }
      \left( e^{ - \frac{\nu}{2} \pi i } D_{-\nu-1}(i z) + e^{
      \frac{\nu}{2} \pi i } D_{- \nu - 1}( -i z ) \right)
      \text{,}  \\
    D_{\nu} ( z ) &= e^{ \nu \pi i } D_{\nu} ( -z ) -
      \frac{ \Gamma(1+\nu) }{ \sqrt{2 \pi} } (e^{\nu \pi i} -
      e^{-\nu \pi i} ) e^{ \frac{\nu}{2} \pi i } D_{-\nu-1}( -i z)
      \text{,} \\
    D_\nu(z) &= e^{-\nu \pi i} D_\nu( -z ) +
      \frac{ \Gamma(1+\nu) }{ \sqrt{2 \pi} } (e^{\nu \pi i} -
      e^{-\nu \pi i} ) e^{ -\frac{\nu}{2} \pi i } D_{-\nu-1}( i z); \label{pcf-relation4}
\end{align}
see also \cite[p.117]{bateman}. When $\nu \notin \mathbb{Z}$, the solution to the RH problem for $\Psi^{(b=0)}$ is given as
follows, which can be verified with (\ref{d-asy})--(\ref{pcf-relation4}),
\begin{equation} \label{psi-b=0}
\Psi^{(b=0)}(\l) = \begin{cases} \biggl( 2^{-\nu} e^{s^2} (e^{2\nu \pi i} - 1) \biggr)^{-\frac{\sigma_3}{2}}  V(\l)
\left(e^{2\nu \pi i} - 1 \right)^{\frac{\sigma_3}{2} }, & \re \l > 0, \\ \biggl( 2^{-\nu} e^{s^2} ( 1 - e^{- 2\nu \pi i} )
\biggr)^{-\frac{\sigma_3}{2}}  V(\l)  \left( 1 - e^{- 2\nu \pi i} \right)^{\frac{\sigma_3}{2} }, & \re \l < 0, \end{cases}
\end{equation}
where
\begin{equation} \label{pcf-sol}
V(\l) = \begin{cases} \left( \begin{matrix} e^{-\frac{\nu}{2} \pi i}
D_{-\nu}( -i \sqrt{2}(\l + s)) & -\frac{\sqrt{2 \pi} \; i}{\Gamma(\nu)} D_{\nu-1}(\sqrt{2}(\l + s)) \\
\frac{\Gamma(1+\nu)}{\sqrt{2 \pi}} e^{-\frac{\nu}{2} \pi i} D_{-\nu -1}( -i \sqrt{2}(\l + s)) & D_\nu(\sqrt{2}(\l + s))
\end{matrix} \right) \\  & \hbox{for } \arg \l \in (0, \frac{\pi}{2}), \\
\left( \begin{matrix} e^{\frac{\nu}{2} \pi i}
D_{-\nu}( -i \sqrt{2}(\l + s)) & \frac{\sqrt{2 \pi} \; i}{\Gamma(\nu)} D_{\nu-1}(-\sqrt{2}(\l + s)) \\
\frac{\Gamma(1+\nu)}{\sqrt{2 \pi}} e^{\frac{\nu}{2} \pi i} D_{-\nu -1}( -i \sqrt{2}(\l + s)) & D_\nu(-\sqrt{2}(\l + s))
\end{matrix} \right) \\ &  \hbox{for } \arg \l \in ( \frac{\pi}{2}, \pi ), \\
\left( \begin{matrix} e^{-\frac{\nu}{2} \pi i}
D_{-\nu}( i \sqrt{2}(\l + s)) & \frac{\sqrt{2 \pi} \; i}{\Gamma(\nu)} D_{\nu-1}(-\sqrt{2}(\l + s)) \\
-\frac{\Gamma(1+\nu)}{\sqrt{2 \pi}} e^{-\frac{\nu}{2} \pi i} D_{-\nu -1}( i \sqrt{2}(\l + s)) & D_\nu(-\sqrt{2}(\l + s))
\end{matrix} \right) \\ &  \hbox{for } \arg \l \in (\pi, \frac{3 \pi}{2}), \\
\left( \begin{matrix} e^{\frac{\nu}{2} \pi i}
D_{-\nu}( i \sqrt{2}(\l + s)) & -\frac{\sqrt{2 \pi} \; i}{\Gamma(\nu)} D_{\nu-1}(\sqrt{2}(\l + s)) \\
-\frac{\Gamma(1+\nu)}{\sqrt{2 \pi}} e^{\frac{\nu}{2} \pi i} D_{-\nu -1}( i \sqrt{2}(\l + s)) & D_\nu(\sqrt{2}(\l + s))
\end{matrix} \right) \\ & \hbox{for } \arg \l \in (\frac{3 \pi}{2} , 2 \pi);
\end{cases}
\end{equation}
see also \cite{dev,kmm}. Then, by (\ref{psi-b=0}) and (\ref{pcf-sol}), we have
\begin{equation}
(\Psi^{(b=0)}(\l))_{12} = - 2^{\frac{\nu}{2}} e^{- \frac{1}{2} s^2} \frac{\sqrt{2 \pi} \; i}{\Gamma(\nu) \left(e^{2\nu \pi i} -
1 \right)} D_{\nu-1}(\sqrt{2}(\l + s)) \quad \hbox{ for } \re \l >0.
\end{equation}
Recalling (\ref{psi-1}) and (\ref{d-asy}), we get from above formula
\begin{eqnarray}
y^{(b=0)}(s) & = &  2 \lim_{\l \to \infty} \l \; 2^{\frac{\nu}{2}} e^{- \frac{1}{2} s^2} \frac{\sqrt{2 \pi} \;
i}{\Gamma(\nu) \left(e^{2\nu \pi i} - 1 \right)} D_{\nu-1}(\sqrt{2}(\l + s)) e^{\frac{\l^2}{2} + s \l} \l^{-\nu} \nonumber \\
& = & c_0 \; e^{-s^2},
\end{eqnarray}
where
\begin{equation}
c_{0} =  \frac{2^{\nu + 1}  \sqrt{\pi} \, i}{\Gamma{(\nu)} \left(e^{2\nu \pi i} - 1 \right)} .
\end{equation}
Furthermore, by (\ref{y-def}) we get
\begin{equation} \label{u-b=0}
u^{(b=0)}(s) = -2s - \frac{d}{ds} \log{y^{(b=0)}(s)} \equiv 0,
\end{equation}
which is the trivial solution of the PIV equation with parameter $\Theta=0$.

It is easily seen that, with (\ref{u-b=0}),  the expressions (\ref{an-final}) and (\ref{bn-final}) for the recurrence
coefficients of the generalized Laguerre polynomials indeed reduce to (\ref{Laguerrerecurrence2}).

\subsection{The case $b= \frac{1}{2}$}

Knowing the precise form of $\Psi^{(b=0)}(\l)$, we are going to compute $\Psi^{(b=1/2)}(\l)$. For $b = \frac{1}{2}$, by
(\ref{special-para}) and (\ref{special-sm}) we have
\begin{equation}
\Theta = - \frac{1}{2}, \quad  \quad \Theta_\infty = \nu + \frac{1}{2}
\end{equation}
and
\begin{equation}
\begin{array}{llll}
s_1= -i \, e^{2\nu \pi i} + i, & s_2 = i, & s_3 = -i \, e^{-2 \nu \pi i} + i, & s_4 = - i \, e^{2 \nu \pi i}.
\end{array}
\end{equation}
Define
\begin{equation} \label{r-def}
\Phi(\l) := e^{-\frac{1}{4} \pi i \sigma_3} \Psi^{(b=1/2)}(\l) e^{\frac{1}{4} \pi i \sigma_3} (\Psi^{(b=0)}(\l))^{-1}.
\end{equation}
From the RH problems for $\Psi^{(b=0)}$ and $\Psi^{(b=1/2)}$, with $\l^{1/2}$ defined with a cut along $i \, \mathbb{R}_-$, we
easily obtain the RH problem for $\l^{1/2} \Phi(\l)$ as follows:

\begin{enumerate}

\item[(a)] $\l^{1/2} \Phi(\l)$ is analytic for $\l \in \mathbb{C} \setminus (
\mathbb{R} \cup i \, \mathbb{R} )$; see Figure~\ref{psicontour};

\item[(b)] $\big( \l^{1/2} \Phi(\l) \big)_+ = \big( \l^{1/2} \Phi(\l)
\big)_- $ for $\l \in \mathbb{R} \cup i \, \mathbb{R}$;

\item[(c)] as $\l \to \infty$
\begin{equation} \label{r-large}
\l^{1/2} \Phi(\l) = \left( \begin{matrix} 1 & -i \ (\Psi_{-1}^{(b=1/2)})_{12} \\ - (\Psi_{-1}^{(b=0)})_{21} & \l +
(\Psi_{-1}^{(b=1/2)})_{22} - (\Psi_{-1}^{(b=0)})_{22}
\end{matrix} \right) + O(\l^{-1}),
\end{equation}
where $\Psi_{-1}(s)$ is given in (\ref{psi-1});

\item[(d)] as $\l \to 0$
\begin{equation} \label{r-small}
\l^{1/2} \Phi(\l) = O(1).
\end{equation}

\end{enumerate}
From parts (b) and (d) of the RH problem it follows that $\l^{1/2} \Phi(\l)$ is an entire function in the complex $\lambda$
plane. By (\ref{r-large})  we then have for $\l \in \mathbb C$,
\begin{equation}
\l^{1/2} \Phi(\l) = \left( \begin{matrix} 1 & -i \ (\Psi_{-1}^{(b=1/2)})_{12} \\ - (\Psi_{-1}^{(b=0)})_{21} & \l
+(\Psi_{-1}^{(b=1/2)})_{22} - (\Psi_{-1}^{(b=0)})_{22}
\end{matrix} \right)
\end{equation}
Recalling $(\Psi_{-1}^{(b=1/2)})_{12} = - \frac{1}{2} \, y^{(b=1/2)}$ (see (\ref{psi-1})),  we obtain
\begin{equation} \label{psi-r}
\l^{1/2} \Phi(\l) = \left( \begin{matrix} 1 & \frac{i}{2} \, y^{(b=1/2)} \\ * & *
\end{matrix} \right),
\end{equation}
where $*$ denotes entries that are not important for what follows. By (\ref{r-def}), we have
\begin{equation}
\l^{1/2} \Psi^{(b=1/2)}(\l) =  e^{\frac{1}{4} \pi i \sigma_3} \l^{1/2} \Phi(\l) \Psi^{(b=0)}(\l) e^{-\frac{1}{4} \pi i
\sigma_3}.
\end{equation}
Combining (\ref{psi-b=0}), (\ref{pcf-sol}) and (\ref{psi-r}), we get that the $(1,2)$ entry of $\l^{1/2} \Psi^{(b=1/2)}(\l)$ for
$\l$ in the fourth quadrant is equal to
\begin{equation} \label{psi-12}
2^{\frac{\nu}{2}} e^{ - \frac{1}{2} s^2} \frac{\sqrt{2 \pi}}{\Gamma(\nu) \left(e^{2\nu \pi i} - 1 \right)} D_{\nu-1}(\sqrt{2}(\l
+ s)) - \frac{y^{(b=1/2)}(s)}{2} 2^{-\frac{\nu}{2}} e^{\frac{1}{2} s^2} D_{\nu}(\sqrt{2}(\l + s)).
\end{equation}
From (\ref{psi-0}), we know
\begin{equation}
\l^{1/2} \Psi^{(b=1/2)}(\l) = O \left( \begin{matrix} 1 & \l \\ 1 & \l \end{matrix} \right) \quad \textrm{as $\l \to 0$ and $\l$
in the fourth quadrant}.
\end{equation}
This, together with (\ref{psi-12}), gives us
\begin{equation}
2^{\frac{\nu}{2}} e^{ - \frac{1}{2} s^2} \frac{\sqrt{2 \pi}}{\Gamma(\nu) \left(e^{2\nu \pi i} - 1 \right)} D_{\nu-1}(\sqrt{2} s)
- \frac{y^{(b=1/2)}(s)}{2} 2^{-\frac{\nu}{2}} e^{\frac{1}{2} s^2} D_{\nu}(\sqrt{2} s) = 0,
\end{equation}
so that
\begin{equation}
y^{(b=1/2)}(s) = \frac{c_{1/2} \ D_{\nu-1}(\sqrt{2} \, s)}{e^{s^2} D_{\nu}(\sqrt{2} \, s)}, \qquad
 c_{1/2} = \frac{2^{\nu + 3/2}  \sqrt{\pi} }{\Gamma{(\nu)}
\left(e^{2\nu \pi i} - 1 \right)}.
\end{equation}
From (\ref{y-def}), we then see that the relevant solution of PIV in case $b=1/2$ is equal to
\begin{equation}
\begin{split}
u^{(b=1/2)}(s) & = -2s - \frac{d}{ds} \log{y^{(b=1/2)}(s)} \\
& = \frac{d}{ds} \log{ \frac{\ D_{\nu}(\sqrt{2} \, s)}{D_{\nu-1}(\sqrt{2} \, s)}}.
\end{split}
\end{equation}
From this formula, it is readily seen that the poles of $u^{(b=1/2)}(s)$ are the zeros of $D_\nu(\sqrt{2} \, s)$ and $D_{\nu
-1}(\sqrt{2} \, s)$. For $\nu$ is real, the parabolic cylinder function $D_{\nu}$ has $\max(\lceil \nu \rceil,0)$ zeros on the
real axis, where $\lceil \cdot \rceil$ is the ceiling function; see \cite[p.126]{bateman}. So, the PIV solution $u^{(b=1/2)}$
has poles on the real axis if $\nu > 0$.

\subsection{The case $b= \frac{k}{2}$, $k \in \mathbb Z$}

Repeating the previous calculations, it is possible (at least in principle) to find the solution $u^{(b = k/2)}(s)$ for PIV with
$ k \in \mathbb{Z}$ in terms of parabolic cylinder functions. For example, for $b=1$ we have
\begin{equation}
y^{(b=1)}(s) = c_1 \frac{\mathcal{W}\Big( D_{\nu-1}(\sqrt{2} \, s), D_{\nu}(\sqrt{2} \, s) \Big)}{e^{s^2} \; \mathcal{W}\Big(
D_{\nu}(\sqrt{2} \, s), D_{\nu+1}(\sqrt{2} \, s) \Big)},
\end{equation}
where $\mathcal{W}$ is the Wronskian with respect to $s$ and
\begin{equation}
c_{1} = - \frac{2^{\nu + 2}  \sqrt{\pi} \, i}{\Gamma{(\nu)} \left(e^{2\nu \pi i} - 1 \right)} .
\end{equation}
Thus,
\begin{equation}
u^{(b=1)}(s) = \frac{d}{ds} \log{ \frac{\mathcal{W}\Big( D_{\nu}(\sqrt{2} \, s), D_{\nu+1}(\sqrt{2} \, s)
\Big)}{\mathcal{W}\Big( D_{\nu-1}(\sqrt{2} \, s), D_{\nu}(\sqrt{2} \, s) \Big)} }.
\end{equation}
Again we see that there are poles on the real axis if $\nu > 0$.

\medskip

\textbf{Acknowledgement.} We would like to thank Professor Peter Clarkson for helpful discussions and comments.


\end{document}